\theoremstyle{plain}
\newtheorem{theorem}{Theorem}[section]
\newtheorem{corollary}[theorem]{Corollary}
\newtheorem{lemma}[theorem]{Lemma}
\theoremstyle{definition}
\newtheorem{remark}[theorem]{Remark}
\newtheorem*{notation}{Notation}
\numberwithin{equation}{section}
\newcommand*{\dif}{\mathop{}\!\mathrm{d}}
\begin{document}
	\title{Velocity averaging and Hölder regularity for kinetic Fokker-Planck equations with general transport operators and rough coefficients}
	\author{Yuzhe ZHU}
	\date{\today}
	\address{D\'epartement de math\'ematiques et applications, \'Ecole normale sup\'erieure, Paris, France}
	\email{yuzhe.zhu@ens.fr}

\begin{abstract}
This article addresses the local boundedness and Hölder continuity of weak solutions to kinetic Fokker-Planck equations with general transport operators and rough coefficients. These results are due to the mixing effect of diffusion and transport. Although the equation is parabolic only in the velocity variable, it has a hypoelliptic structure provided that the transport part $\partial_t+b(v)\cdot\nabla_x$ is nondegenerate in some sense. We achieve the results by revisiting the method, proposed by Golse, Imbert, Mouhot and Vasseur in the case $b(v)= v$, that combines the elliptic De~Giorgi-Nash-Moser theory with velocity averaging lemmas. 
\end{abstract}
\maketitle

\section{Introduction}
We are concerned with the Fokker-Planck equation of the form
\begin{equation}\label{FP}
(\partial_t+b(v)\cdot\nabla_x)f={\rm div}_v\left(A(t,x,v)\nabla_v f\right)+B(t,x,v)\cdot\nabla_vf+s(t,x,v)
\end{equation}
in $(t,x,v)\in[t_1,t_2]\times U_x\times U_v$, for $-\infty<t_1<t_2\le\infty$ and some open sets $U_x\subset\mathbb{R}^{d_1}$, $U_v\subset\mathbb{R}^{d_2}$, where the vector fields $b$ and $B$ are valued in $\mathbb{R}^{d_1}$ and $\mathbb{R}^{d_2}$, respectively. 
In the whole article, we suppose that $b$ only depends on the velocity variable $v$. We also make the assumption
\begin{equation} \label{H}
 \left\{ 
 \begin{aligned}
 \ &0<\lambda I\le A\le\Lambda I, \\
 \ &\| B\|_{L^\infty}\le\Lambda, \\
 \end{aligned}
 \right. 
 \end{equation} 
 for some constants $\lambda$, $\Lambda>0$, where $A$ denotes a ${d_2}\times{d_2}$ real symmetric matrix with its eigenvalues valued in $[\lambda,\Lambda]$ almost everywhere. 
In addition, we will assume that $b\in L^\infty\cap H^1$ is \emph{nondegenerate} in the sense that 
there exist some constants $K>0$ and $\alpha\in (0,1]$ such that 
\begin{equation}\label{nond}
	\forall \mu\in\mathbb{R},\  \forall \nu\in\mathbb{S}^{{d_1}-1}, \ \forall \epsilon>0,\ \  
	\big|\{ v\in B_1(0): | \mu+b(v)\cdot\nu|\le \epsilon\}\big| 
	\le K\epsilon^\alpha. 
\end{equation}
Such weak conditions are sufficient to derive the local boundedness of solutions to \eqref{FP}; see Theorem~\ref{bdd}. 
To enhance the regularity of the solutions to obtain Theorem \ref{regularity}, rescaling each open ball in the above condition is necessary. In this way, a stronger assumption on $b\in C^1$ reads as follows: 
there exists some constant $K>0$ such that for any open ball $B_r(v_0)$ with $B_{2r}(v_0)\subset U_v$, the following holds:  
 \begin{equation}\label{N}
 \forall \mu\in\mathbb{R},\ \ \forall \nu\in\mathbb{S}^{{d_1}-1}, \ \forall \epsilon>0,\ \  
 \big|\{ v\in B_r(v_0): | \mu+b(v)\cdot\nu|\le \epsilon\}\big| 
 \le Kr^{{d_2}-1}\epsilon. 
 \end{equation}
We will see that, when $d_1=d_2$, this condition for $b\in C^1$ is equivalent to the assumption that the induced matrix norm of $(Db)^{-1}$ is bounded (see Lemma~\ref{nondlemma}).

\subsection{Main results}
Before stating the main contributions of this article, let us make the notion of weak solutions precise. 
\emph{Weak solutions} to \eqref{FP} in $[t_1,t_2]\times U_x\times U_v$ 
are defined as functions $f\in L^\infty_t\left([t_1,t_2],L^2_{x,v}(U_x\times U_v)\right)\cap L^2_{t,x}\left([t_1,t_2]\times U_x,H^1_v(U_v)\right)$ satisfying  
$(\partial_t+b\cdot\nabla_x)f\in L^2_{t,x}\left([t_1,t_2]\times U_x,H^{-1}_v(U_v)\right)$
and verifying \eqref{FP} in the sense of distributions.
Under the same assumptions on $f$ with the equality of \eqref{FP} replaced by the inequality $\le$  that holds when acting on any nonnegative test functions, we say that $f$ is a \emph{subsolution} to \eqref{FP}. 

\begin{theorem}[local boundedness]\label{bdd}
	Let $f$ be a subsolution to \eqref{FP} satisfying \eqref{H} and \eqref{nond} in $Q_1:=(-1,0]\times B_1\times B_1$, and in addition, 
	$$\| b\|_{L^\infty(B_1)}+\| Db\|_{L^2(B_1)}\le\Lambda {\quad and\quad} s\in L^q(Q_1)$$ 
	for some $q>\frac{(4+\alpha)(4+d_2)(1+{d_1}+{d_2})}{2\alpha}$. Then, the positive part $f^+$ of the subsolution is bounded in $Q_{\rm int}:=(-\frac{1}{2},0]\times B_\frac{1}{2}\times B_\frac{1}{2}$. 
	More precisely, there exists some positive constant $C$ only depending on $\lambda,\Lambda,d_1,d_2,K,\alpha,$ and $q$ such that  
	\begin{equation*}\label{bddestimate}
	\sup\nolimits_{Q_{\rm int}} f^+
	\le C \left(\| f^+\|_{L^2(Q_1)}+\| s\|_{L^q(Q_1)}\right). 
	\end{equation*}
\end{theorem}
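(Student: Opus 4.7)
The plan is to carry out a De~Giorgi-style nonlinear iteration adapted to the kinetic setting, with the loss of parabolic Sobolev embedding (the equation diffuses only in $v$) compensated by a velocity averaging lemma. By linearity and homogeneity, I would first reduce to the situation where $\|f^+\|_{L^2(Q_1)}+\|s\|_{L^q(Q_1)}\le\varepsilon_0$ for a small constant $\varepsilon_0$ to be chosen, so that it suffices to prove $f\le 1$ on $Q_{\rm int}$.

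The iteration runs on truncations. Fix a decreasing sequence of cylinders $Q_k$ interpolating between $Q_1$ and $Q_{\rm int}$, together with smooth cutoffs $\chi_k$ supported in $Q_k$ and equal to $1$ on $Q_{k+1}$, and set $f_k:=(f-(1-2^{-k}))^+$, $U_k:=\|f_k\chi_k\|_{L^2}^2$. The first building block is a Caccioppoli inequality: testing the subsolution inequality against $f_k\chi_k^2$, using $0<\lambda\le A\le\Lambda$, the $L^\infty$ bound on $B$ and on $b$, Cauchy--Schwarz on the $B$ term and Young's inequality on $s$, one obtains
\[
\sup_t\!\int\! f_k^2\chi_k^2\,\dif x\,\dif v \;+\; \lambda\!\iiint\!|\nabla_v f_k|^2\chi_k^2
\;\le\; C\,4^k\Big(\|f_k\|_{L^2(Q_k)}^2+\|s\mathbf{1}_{\{f_{k-1}>0\}}\|_{L^2}\|f_k\|_{L^2}\Big).
\]
In the same spirit, by duality one gets a control of $(\partial_t+b\cdot\nabla_x)(f_k\chi_k^2)$ in $L^2_{t,x}H^{-1}_v$ by $C\,4^k\|f_k\|_{L^2(Q_k)}+\|s\mathbf{1}_{\{f_{k-1}>0\}}\|_{L^2}$, the norm of $b$ in $L^\infty\cap H^1$ being absorbed in the constant.

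The key nonstandard ingredient, and the main obstacle, is the gain of integrability. Because the diffusion acts only on $v$, parabolic Sobolev embedding is unavailable; instead I would invoke the $L^2$ velocity averaging lemma for transport operators with the nondegeneracy \eqref{nond}: for functions $g$ with $g,\nabla_v g\in L^2$ and $(\partial_t+b\cdot\nabla_x)g\in L^2_{t,x}H^{-1}_v$, the velocity averages $\int g\varphi\,\dif v$ enjoy a fractional Sobolev gain in $(t,x)$ depending on $\alpha$. Interpolating this gain against the $H^1_v$ control of $g$ and the kinetic Sobolev embedding on the full cylinder (the factor $1+d_1+d_2$ in the hypothesis on $q$), one obtains $g\in L^{2(1+\delta)}_{\rm loc}$ for some explicit $\delta=\delta(\alpha,d_1,d_2)>0$. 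Applied to $g=f_k\chi_k^2$ and combined with the Caccioppoli estimate, this yields
\[
\|f_k\chi_{k+1}\|_{L^{2(1+\delta)}}^2 \;\le\; C\,4^k\bigl(U_k + \|s\mathbf{1}_{\{f_{k-1}>0\}}\|_{L^2}^2\bigr).
\]
Verifying that the paper's threshold $q>\frac{(4+\alpha)(4+d_2)(1+d_1+d_2)}{2\alpha}$ is exactly what makes the source contribution compatible with the iteration is the delicate bookkeeping; by H\"older one estimates $\|s\mathbf{1}_{\{f_{k-1}>0\}}\|_{L^2}^2\le |\{f_{k-1}>0\}|^{1-2/q}\|s\|_{L^q}^2$, and $|\{f_{k-1}>0\}|\le 4^{k-1}U_{k-1}$ by Chebyshev.

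Finally I would close the iteration. Since $\{f_{k+1}>0\}\subset\{f_k>2^{-(k+1)}\}$, H\"older gives
\[
U_{k+1} \;\le\; \|f_k\chi_{k+1}\|_{L^{2(1+\delta)}}^2\,\bigl|\{f_{k+1}>0\}\bigr|^{\delta/(1+\delta)}
\;\le\; C\,(4^k)^{1+\eta}\bigl(U_k+\varepsilon_0^2\bigr)^{1+\eta}
\]
for some $\eta>0$, where the choice of $q$ was made precisely to ensure that the source contribution is also dominated by $(U_k+\varepsilon_0^2)^{1+\eta}$ up to a geometric factor. A standard nonlinear recurrence lemma then implies $U_k\to 0$ provided $\varepsilon_0$ is small enough, depending only on the listed parameters, which gives $f\le 1$ on $Q_{\rm int}$ and hence the announced bound after undoing the rescaling. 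The hard part is squeezing the explicit threshold on $q$ out of the interpolation between the averaging gain, the $H^1_v$ bound, the kinetic Sobolev embedding, and the H\"older estimate on $\{f_{k-1}>0\}$; everything else is bookkeeping along the De~Giorgi iteration.
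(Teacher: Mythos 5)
Your overall architecture (De~Giorgi iteration with the gain of integrability supplied by velocity averaging rather than parabolic Sobolev embedding) is the same as the paper's second proof of Theorem~\ref{bdd}. But there is a genuine gap in the step where you claim ``by duality one gets a control of $(\partial_t+b\cdot\nabla_x)(f_k\chi_k^2)$ in $L^2_{t,x}H^{-1}_v$'' and then ``apply [the averaging lemma] to $g=f_k\chi_k^2$.'' The function $f$ is only a \emph{subsolution}. Writing the subsolution inequality as an equality produces a nonnegative defect measure $\mu$:
\[
(\partial_t+b\cdot\nabla_x)(f_k\chi_k^2)={\rm div}_v\big(A\nabla_v(f_k\chi_k^2)\big)+\text{(lower order)}-\mu,\qquad \mu\ge 0,
\]
and $\mu$ is \emph{not} controlled in $L^2_{t,x}H^{-1}_v$ by the energy estimate; testing against a cutoff only bounds its total mass, and a measure can concentrate. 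Because the transport term could be arbitrarily negative (a subsolution may drop abruptly in time), the one-sided inequality does not yield the two-sided $L^2_{t,x}H^{-1}_v$ bound that the averaging lemma needs as an input. Concretely, the hypotheses of Lemma~\ref{VA}/Lemma~\ref{VAL} require $h$ to satisfy the transport equation with $L^2$ sources, not just an inequality.

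The paper's fix, which you would need to insert, is a comparison/barrier argument: set up an auxiliary Cauchy--inflow problem of the form \eqref{whole} with zero data on the parabolic and inflow boundary, let $h$ be its \emph{solution} (so it satisfies the transport equation exactly, with $g_0,g_1\in L^2$ controlled by the Caccioppoli estimate), prove $0\le \eta f_k\le h$ by the maximum principle, run the global energy estimate for $h$ to get $h\in L^2_{t,x}H^1_v$, apply Lemma~\ref{VAL} and Sobolev embedding to $h$, and only then transfer the gain of integrability back to $f_k$ via the pointwise domination $f_k\le h$. Once this barrier step is inserted your recurrence $U_{k+1}\lesssim (4^k)^{1+\eta}(U_k+\varepsilon_0^2)^{1+\eta}$ and the bookkeeping on the exponent threshold $q>\frac{(4+\alpha)(4+d_2)(1+d_1+d_2)}{2\alpha}$ (which indeed comes from combining $\gamma=\frac{2\alpha}{(4+\alpha)(4+d_2)}$ with the Sobolev conjugate \eqref{sobolevconjugate} and the constraint \eqref{bddindex}) go through as you describe.
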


\begin{theorem}[H\"older regularity]\label{regularity}
	Let $d_1=d_2=d$ and $f$ be a weak solution to \eqref{FP} satisfying \eqref{H} and \eqref{N} in $Q_1$.
 Assume $b\in C^1(B_1)$ with a function $o_1:[0,\infty)\rightarrow[0,\infty)$ as the modulus of continuity of $Db$, that is, 
	\begin{equation*}
	|Db(v)-Db(w)|\le o_1(|v-w|)  {\ \ \rm for\ any\ }v,w\in B_1 {\quad and\quad}\lim_{r\rightarrow0}o_1(r)=o_1(0)=0.
	\end{equation*} 
	In addition, we assume
	$$\| b\|_{L^\infty(B_1)}+\| Db\|_{L^\infty(B_1)}\le\Lambda {\quad and\quad} s\in L^q(Q_1)$$ 
	for some $q>(1+2d)^2$.  
	Then, there exist some constants $\beta\in(0,1)$ and $C>0$ only depending on $\lambda,\Lambda,d,K,q,$ and $o_1$ 
	such that 
	\begin{equation*}
	\| f \|_{C^{\beta}(Q_{\rm int})}
	\le C\left(\| f\|_{L^2(Q_1)}+\| s\|_{L^q(Q_1)}\right). 
	\end{equation*}
\end{theorem}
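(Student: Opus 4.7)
The plan is to follow the De~Giorgi--Nash--Moser scheme as adapted to kinetic equations by Golse, Imbert, Mouhot and Vasseur in the case $b(v)=v$. Since local boundedness is already established in Theorem~\ref{bdd}, the H\"older estimate reduces to an oscillation decay statement around each point of $Q_{\rm int}$: there should exist constants $\theta\in(0,1)$ and $r_0\in(0,\tfrac12)$ such that any suitably rescaled weak solution $f$ on $Q_1$ with $\|f\|_{L^\infty}\le 1$ and $\|s\|_{L^q}$ small enough satisfies $\mathrm{osc}_{Q_{r_0}} f\le 1-\theta$. Iterating this bound on dyadically shrinking kinetic cylinders then yields $C^{\beta}$ regularity with exponent $\beta=\log(1-\theta)/\log r_0$, and the full estimate follows by the usual rescaling between $Q_{\rm int}$ and $Q_1$.

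\textbf{Oscillation decay via two De~Giorgi lemmas.} The unit-scale oscillation decay is obtained by combining two lemmas. The first is a \emph{measure-to-pointwise lemma}: if $f$ is a subsolution with $f\le 1$ on $Q_1$ and if $|\{f>0\}\cap Q_1|$ is sufficiently small, then $f\le 1/2$ on $Q_{\rm int}$. I would prove it by iterating the energy inequality at truncation levels $\ell_k=\tfrac12(1-2^{-k})$ and using a velocity averaging lemma under assumption~\eqref{N} to upgrade the natural $L^\infty_tL^2_{x,v}\cap L^2_{t,x}H^1_v$ control of $(f-\ell_k)_+$ into a gain of integrability, which through interpolation produces a superlinear recurrence on $\int (f-\ell_k)_+^2$. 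The second is an \emph{intermediate value lemma}: if both $\{f\le 0\}$ and $\{f\ge 1-\mu\}$ occupy non-negligible portions of $Q_1$, then the intermediate level set $\{-\mu<f<1-\mu\}$ has measure bounded below, again by a kinetic averaging argument on a truncated $f$. A dichotomy on the mass of $\{f>0\}$, combined with these two ingredients, yields the desired oscillation decay on a smaller cylinder.

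\textbf{Linearization and main obstacle.} Because $b$ is merely $C^1$, the unit-scale results do not rescale in a self-similar way as they do when $b(v)=v$. Around each base point $z_0=(t_0,x_0,v_0)$, I would freeze $b$ at its linearization $\tilde b(v)=b(v_0)+Db(v_0)(v-v_0)$ and compose a Galilean change of variables with respect to $\tilde b$ with the linear change induced by $Db(v_0)$, invertible by assumption~\eqref{N} and Lemma~\ref{nondlemma}. On a kinetic cylinder of radius $r$ this reduces the principal transport to the model $w\cdot\nabla_y$, while the residue $b-\tilde b$ contributes an extra drift whose $L^\infty$ norm is of order $o_1(r)$ after parabolic rescaling, hence absorbed into the class \eqref{H} provided $r$ is small. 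The main obstacle is precisely this loss of exact Galilean self-similarity: the constants $\theta,r_0$ must be shown to depend only on $\lambda,\Lambda,K,d$ and to be stable under small $L^\infty$ perturbations of the drift, so that the same geometric decay can be iterated at every dyadic scale once $r$ falls below a threshold fixed by $o_1$. Summing the resulting dyadic oscillation bounds then gives the claimed H\"older seminorm estimate with constants depending on $\lambda,\Lambda,d,K,q$ and $o_1$.
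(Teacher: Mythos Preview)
Your overall architecture---oscillation decay at unit scale followed by iteration---matches the paper, and you correctly single out the linearization around $Db(v_0)$ as the crux. There is, however, a genuine gap in how you propose to handle the residue $b-\tilde b$.

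You write that after the parabolic/Galilean rescaling the remainder ``contributes an extra drift whose $L^\infty$ norm is of order $o_1(r)$ \ldots\ hence absorbed into the class \eqref{H}.'' This is not correct as stated: the residue $(b_r(\tilde v)-Db(v_0)\tilde v)\cdot\nabla_x$ is an $x$-drift, while the structural hypothesis \eqref{H} only controls the diffusion matrix $A$ and the lower-order $v$-drift $B\cdot\nabla_v$. There is no term in \eqref{H} into which a small $x$-transport perturbation can be placed, so the reduction to the model operator $\partial_t+w\cdot\nabla_y$ is not exact at any positive scale. You do then acknowledge that stability of $(\theta,r_0)$ under $L^\infty$ perturbations of the drift is ``the main obstacle''; but since the GIMV constants arise from a compactness argument, proving that stability amounts to redoing the whole compactness argument with a varying drift---which is precisely what the paper does, only without first changing variables.

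Concretely, the paper does not reduce to $v\cdot\nabla_x$. It keeps the rescaled drift $b_r(\tilde v)=r^{-1}(b(v_0+r\tilde v)-b(v_0))$ as is and proves the intermediate value lemma (Lemma~\ref{lemmaDG}) directly for this family by contradiction/compactness, letting a sequence $r_n\to r\in[0,r_0]$. After passing to the limit, the limit function is a $v$-independent indicator $\mathbbm{1}_P(t,x)$; integrating the limiting inequality against a radial mollifier $\rho(v-w)$ produces the averaged drift $\mathscr{B}(w)=\int b_r(v)\rho(v-w)\,dv=Db(v_0)w+O(o_1(r))$. By Lemma~\ref{nondlemma} the image of $w\mapsto Db(v_0)w$ covers a ball $B_{2\sigma}$, so for $r\le r_0(o_1)$ the range of $\mathscr{B}$ still covers $B_\sigma$, and one can propagate zeros of $\mathbbm{1}_P$ along every direction in $B_\sigma$ to contradict the assumed measure conditions. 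Thus the linearization is used \emph{inside} the compactness argument rather than as a preliminary change of variables, which sidesteps the absorption issue entirely.

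Two smaller points. First, your intermediate value lemma places both conditions on the same cylinder $Q_1$; in the kinetic (and already in the parabolic) setting one needs a time lag: the paper imposes $|\{f\le 0\}\cap Q_\omega^-|\ge\delta_2|Q_\omega^-|$ on an \emph{earlier} cylinder and $|\{f\ge 1-\theta\}\cap Q_\omega|\ge\delta_1|Q_\omega|$ on a \emph{later} one, since subsolutions can drop instantaneously in time. Second, your description of the IVL proof (``a kinetic averaging argument on a truncated $f$'') is too vague; the actual mechanism is the compactness/limit identification just described, with averaging used only to obtain strong $L^2$ convergence of the sequence.
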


\subsection{Related work and contributions}
The boundedness and Hölder regularity results above for the free streaming case, that is, $b(v)=v$, were first achieved in \cite{PP} and \cite{WZ}, respectively. 
A simplified proof was provided in \cite{GIMV}, where the authors combined the methods arising from velocity averaging with De Giorgi-Nash-Moser theory to derive the Hölder continuity of weak solutions to the kinetic Fokker-Planck equation \eqref{FP} with $b(v)=v$. 
As De Giorgi's method has extended to nonlocal parabolic operators (see \cite{CV}, \cite{CCV}, \cite{FK}), kinetic models with integral diffusion were studied in \cite{IS} and \cite{St}. 

On the one hand,  the arguments in \cite{PP} and \cite{WZ} rely on the explicit expression of fundamental solutions
to Kolmogorov type operators including $\partial_t+v\cdot\nabla_x-\Delta_v$. 
On the other hand, the fundamental solution of the operator $\partial_t+b(v)\cdot\nabla_x-\Delta_v$ cannot be computed explicitly in general, especially for $b(v)$ that is not affine. 
The main contribution of the present paper is to show that in the setting of general transport operators $\partial_t+b(v)\cdot\nabla_x$, velocity averaging lemmas yield local boundedness and Hölder regularity of solutions under a suitable nondegeneracy condition on the rough vector field $b(v)$.

We revisit the averaging lemmas and the strategy proposed in \cite{GIMV} to remedy the lack of regularity of the diffusion coefficients and the degeneracy of the elliptic structure with a nondegenerate $b(v)$. 
Due to the efforts to prove the averaging lemmas (see the discussion in \S\ref{nondegenerate} below), for the boundedness part, $b(v)$ is only required to be nondegenerate in the sense of \eqref{nond} and lying in $L^\infty\cap H^1$. To obtain the Hölder regularity of solutions, we have to enhance $b(v)\in C^1$ to be nondegenerate in the sense of \eqref{N}, which is equivalent to assuming that $Db(v)$ is invertible everywhere.  

Besides, in \cite{GIMV}, two methods toward the local boundedness of solutions to \eqref{FP} with $b(v)=v$ are presented. One is Moser's approach in the case without source term and the other one is De Giorgi's approach. One of our contributions in this part is using Moser's iterative method to deal with the general type equation \eqref{FP} in the appearance of source terms.

Finally, a useful result of propagation of Hölder regularity is derived by means of some zero extension. 
The extension reduces the boundary estimate to the same regularization procedure as the interior one. 
The reduction argument whose idea arises from elliptic theory (see, for instance, \cite[Theroem~8.27]{GT}) can also be applied to obtain H\"older estimates for solutions with prescribed H\"older continuous boundary values near a given Lipschitz boundary.

\subsection{Motivation and background}\label{backgrounds}
\subsubsection{Physical interpretation}
With $d_1=d_2=d$, some physical interpretation for the Fokker-Planck equation \eqref{FP} comes from considering the Liouville transport operator 
\begin{equation*}
L:=\partial_t+\{\cdot,H\}=\partial_t+b\cdot\nabla_x-B\cdot\nabla_v,
\end{equation*} 
where $H$ is a given Hamiltonian; then the velocity term and the force term are given by  $b=\nabla_vH$ and $B=\nabla_xH$, respectively.  For instance, 
$b(v)= v$ in the setting of classical Newton's laws, while $b(v)= \frac{v}{\sqrt{1+|v|^2}}$ is induced by the relativistic effect. 
It is not hard to see that \eqref{nond} and \eqref{N} both hold with $\alpha=1$ for $b(v)=v$ or $\frac{v}{\sqrt{1+v^2}}$ in any bounded set $U_v$. 
Varieties of nonlinear coefficients $b(v)$ also appear in quantum transport of electrons in semiconductors, where the associated semiclassical Hamiltonian $H=\mathcal{E}(v)+V(t,x)$, $\mathcal{E}(v)$ is given by the energy band diagram of the semiconductor, and $V(t,x)$ is the varying potential contribution; see \cite{DM}, \cite{MRS} for further details. 

In a large particle system, if there is no collision, then the Liouville equation reads $Lf=0.$ 
It is a first order hyperbolic equation that models the evolution of the distribution function $f(t,x,v)$ for the system of identical particles, that is the density of particles at time $t$ located at the position $x$ with a physical state described by the velocity variable $v$. One has to modify the equation by adding a collision term $\mathscr{C}(f)$ on the right-hand side when the effect of the collisions between particles in a certain surrounding bath is taken into account, which means $Lf=\mathscr{C}(f).$
Different models lead to various collision operators $\mathscr{C}$. For instance, under Coulomb collisions, the Landau-Coulomb operator proposed in \cite{Lan} has a quasilinear form and enjoys the ellipticity in velocity as long as controls of some macroscopic physical quantities are provided; see \cite{DV}. More kinetic diffusive models can be found in \cite{Ch}, \cite{Vi}. 
In view of applications to these nonlinear equations, the structure of mixing transport and degenerate diffusion inspires the study of the Fokker-Planck equation \eqref{FP} with rough coefficients.

\subsubsection{Nondegenerate condition and velocity averaging}\label{nondegenerate}
The nondegenerate condition \eqref{N} we impose arises from velocity averaging and is used in the zooming procedure (see subsections \ref{zoom}, \ref{invertibility}). 

Recall the form of our equation \eqref{FP}. Due to its hyperbolicity, we cannot expect the gain of regularity for the solution $f(t,x,v)$ itself of the kinetic transport equation 
\begin{equation*}\label{tranport}
\left(\partial_t+b(v)\cdot\partial_x\right)f(t,x,v)=S(t,x,v),
\end{equation*}
where the source term $S\in L_{t,x}^2(H^{-m}_v)$ with $m=0,1$ is rough. 
However, from the perspective of the symbol of the transport operator, it is elliptic outside small neighborhoods around its characteristic directions. The lack of regularity inside can be compensated by the ellipticity outside on a macroscopic level. 
This kind of phenomenon was first observed by Agoshkov \cite{Ag}, Golse, Perthame and Sentis \cite{GPS} independently and formulated precisely later on in \cite{GLPS}. 
It was also shown in \cite{GLPS} that the condition \eqref{nond} ensures that a velocity averaging lemma is satisfied when $m=0$, which predicts that there is some better regularity on the averages of $f$ with respect to the microscopic variable $v$. 
A subsequent improvement in the free streaming case ($b(v)=v$) with loss of derivatives in the source ($m>0$) was obtained in \cite{DPL}. We point out that the loss of $v$-derivatives in the source requires the $v$-regularity on the drift coefficient $b(v)$. 
As the regularizing effect of general transport operators comes from their nondegeneracy, we derive a similar result (Lemma \ref{VA}) for general nondegenerate $b(v)$ lying in $L^\infty\cap H^1$. 

We remark that the condition \eqref{nond} is also a quantitative version of the noncharacteristic condition assumed in \cite{GG}, where microlocal analysis techniques are used and high order smoothness is required on the coefficient $b$. Besides, in the case  $d_2=1$, the nondegeneracy of $b$ corresponds to the genuine nonlinearity of hyperbolic equations in the kinetic formulation of scalar conservation laws; see \cite{LPT}. 
More details about the conditions \eqref{nond} and \eqref{N} in an analysis viewpoint can be found in Remark~\ref{rk_NG}, Remark~\ref{rkNA}, and subsection~\ref{invertibility} below. 

Combining the velocity averaging with an a priori bound on the derivatives in velocity of solutions, a hypoelliptic transfer of compactness was observed and used in \cite{Lions}. In \cite{Bo}, F.~Bouchut put the transfer mechanism in the form of precise fractional Sobolev regularity estimates. 
For \eqref{FP}, an a priori bound on $v$-derivatives of solutions is derived from the energy estimate due to the diffusion term. 
The mixing effect of transport and diffusion allows us to see a hypoelliptic structure of the equation. 

\subsubsection{Hypoelliptic structure}\label{hypoelliptic}
We remark that the structure of the Fokker-Planck equation has two aspects, as it is the combination of a transport operator and an elliptic operator in the velocity variable. 

On the one hand, it is a degenerate parabolic equation, as it only satisfies the elliptic condition with respect to the variable $v$. It is well-known that elliptic operators enjoy the regularizing effect even if their coefficients are rough. 
In the late 1950s, De~Giorgi \cite{DG} obtained the H\"{o}lder regularity of solutions to elliptic equations in divergence form with bounded measurable coefficients and Nash \cite{Na} independently solved the case of parabolic equations. 
In the spirit of De~Giorgi's idea to achieve the local boundedness of solutions, Moser \cite{Mo1} provided a new proof by developing a technique now known as Moser's iteration. 

On the other hand, when $b(v)$ satisfies the nondegenerate condition \eqref{N}, \eqref{FP} has a kind of hypoelliptic structure. 
It was first noticed by Kolmogorov \cite{Ko} that the operator $L_K:=\partial_x^2+x\partial_y-\partial_t$ in three-dimensional space-time preserves singular supports, which means that the smoothness of $L_K f$ implies the smoothness of $f$, as its fundamental solution can be calculated explicitly. 
It inspired Hörmander \cite{Ho} to study general hypoelliptic second order differential operators including the ones of the form $\sum_{i=1}^m X_i^*X_i+X_0$ associated with a system of real smooth vector fields $\{X_i\}_{i=0}^m$, where $X_i^*$ is the formal adjoint of $X_i$. Hörmander's celebrated theorem states that if the Lie algebra generated by $\{X_i\}_{i=0}^m$ spans the full tangent space at each point, then the operator preserves singular supports. 
Without regard to the smoothness of their coefficients, setting $(X_1,X_2,\ldots,X_{d_2})^T:=\sqrt{A}\nabla_v$ and $X_0:=\partial_t+b(v)\cdot\nabla_x$ satisfying \eqref{N}, we point out that Hörmander's condition holds for this system $\{X_i\}_{i=0}^{d_2}$. 
More detailed relations between the nondegenerate condition \eqref{N} and Hörmander's condition will be discussed in Subsection~\ref{invertibility}. 

\subsection{Notation}
We denote by $C$ a \emph{universal} constant if it depends only on $\lambda$, $\Lambda$, $d_1$, $d_2$, $q$, $K$, $\alpha$, and $o_1$. 
We will write $X \lesssim Y$ to say that $X \le C Y$ for some universal constant $C>0$. 
For simplicity, we do not change its symbol $C$, even if it changes line by line. 

We define the cylinder domain  $Q_R:=(-R^2,0]\times B_{R^3}(0)\times B_{R}(0)$. We will occasionally simplify $B_{R}(0)$ by $B_R$. 

Throughout the article, $\sup f$ and $\inf f$ refer to the essential supremum and the essential infimum of $f$ in a given domain, respectively. 

\subsection{Organization of the paper}
In Section~\ref{sectionva}, we establish velocity averaging lemmas for general transport equations. 
In Section~\ref{sectionbdd}, the local boundedness Theorem~\ref{bdd} is derived from velocity averaging lemmas and the De Giorgi-Nash-Moser theory. 
Section~\ref{sectionholder} is devoted to H\"older regularity, including the proof of Theorem~\ref{regularity} and a boundary estimate. 

\medskip\noindent\textbf{Acknowledgement.} The author would like to thank François Golse and Cyril Imbert for suggesting the problem and for helpful discussions 
and also thanks the anonymous referee for suggestions to improve the manuscript. 
\includegraphics[height=0.3cm]{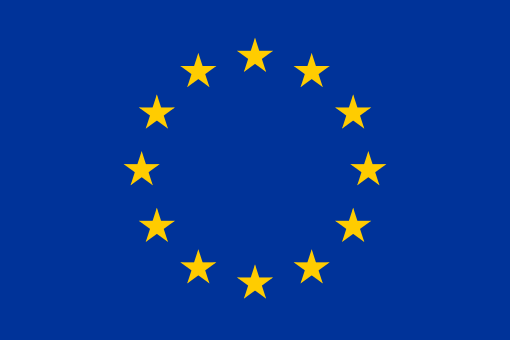} 
This work has received funding from the European Union’s Horizon 2020 research and innovation programme under the Marie Skłodowska-Curie grant agreement No 754362.

\section{Velocity averaging}\label{sectionva}
A key ingredient of the proof in the work of Golse et al. \cite{GIMV} is based on the velocity averaging method and relies on the result due to Bouchut \cite{Bo}, which gives a quantitative hypoelliptic regularity estimate.

In this section, we first generalize the classical velocity averaging lemma to the case with general nondegenerate $b(v)$. 
Armed with the mechanism of the averaging theory, we then derive the regularity in all variables on the solution in view of the extra regularity with respect to $v$ of the solution. 

\subsection{Classical averaging theory}
The following result was obtained in \cite{DPL} and \cite{Re} in the cases where $b(v)=v$ and $b(v)= \frac{v}{\sqrt{1+|v|^2}}$, respectively. 
The main strategy of the proof is similar to them in spirit, which is based on a microlocal decomposition in the Fourier space of time and space. 
By this splitting and the nondegenerate condition, we can handle the terms in the region around the characteristic directions of the transport operator and its complement. 
Then, in appropriate microlocal domains, we will exploit some ellipticity in the complement to compensate the lack of regularity of the transport operator around its characteristic directions in the averaging sense. 

\begin{lemma}\label{VA}
Consider the open ball $B_R\subset \mathbb{R}^{d_2}$ and let $\psi(v)\in C_c^1 (B_R)$. Assume that $h$, $g_0\in L^2 (\mathbb{R}_t\times\mathbb{R}^{d_1}_x\times B_R)$ and 
 $g_1\in L^2 (\mathbb{R}_t\times\mathbb{R}^{d_1}_x\times B_R,\mathbb{R}^{d_2})$ verify the transport equation
\begin{equation*}\label{transport}
\left(\partial_t+b(v)\cdot\nabla_x\right)h(t,x,v) = {\rm div}_v g_1 (t,x,v)+g_0(t,x,v)
\end{equation*}
in the sense of distributions, where $b(v)\in L^\infty\cap H^1 (B_R,\mathbb{R}^{d_1})$ satisfies the nondegenerate condition \eqref{nond} (replace $B_1(0)$ by $B_R$). Then, the average satisfies
\begin{equation*}\label{averaging}
h_\psi(t,x) :=
\int_{B_R} h(t,x,v)\psi(v) \dif v
\in H^{\varsigma}(\mathbb{R}_t\times\mathbb{R}^{d_1}_x)
\end{equation*}
with the estimate
\begin{equation*}\label{VAest}
\| h_\psi\|_{{H}^{\varsigma}}\le C_b
\| \psi\|_{W^{1,\infty}}\left(\| h\|_{L^2}+\| g_1\|_{L^2}+\| g_0\|_{L^2}\right),
\end{equation*}
where $\varsigma:=\frac{\alpha}{4+\alpha}$ and $C_b>0$ only depending on $d_1$, $d_2$, $K$, $R$, $\| b\|_{L^\infty}$, and $\| Db\|_{L^2}$.
\end{lemma}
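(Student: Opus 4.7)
The plan is to Fourier-transform in $(t,x)$, writing $(\tau,\xi)$ for the dual variables and introducing the symbol $\sigma(\tau,\xi,v):=\tau+b(v)\cdot\xi$ of the transport operator. The equation then becomes $i\sigma\,\hat h=\mathrm{div}_v\hat g_1+\hat g_0$ in the $v$-variable, with $\hat h_\psi(\tau,\xi)=\int_{B_R}\hat h(\tau,\xi,v)\psi(v)\dif v$. By Plancherel, the sought $H^\varsigma$ bound is equivalent to a pointwise estimate on $|\hat h_\psi|^2$ against the weight $\Xi^{2\varsigma}$, where $\Xi:=(1+|\tau|^2+|\xi|^2)^{1/2}$, whose $(\tau,\xi)$-integral against $\|\hat h\|_{L^2_v}^2+\|\hat g_0\|_{L^2_v}^2+\|\hat g_1\|_{L^2_v}^2$ is to be controlled.

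The decisive regime is $|\tau|\le 2\|b\|_{L^\infty}|\xi|$, in which $\Xi\sim|\xi|$. I fix a smooth cutoff $\phi\in C_c^\infty(\mathbb{R})$ with $\phi\equiv 1$ on $[-1/2,1/2]$ and support in $[-1,1]$, and a free threshold $\epsilon>0$. I decompose
\begin{equation*}
\hat h_\psi=\int_{B_R} \hat h\,\psi\,\phi(\sigma/\epsilon)\dif v + \int_{B_R} \hat h\,\psi\bigl(1-\phi(\sigma/\epsilon)\bigr)\dif v=:J_1+J_2.
\end{equation*}
For $J_1$ (the hyperbolic piece), Cauchy--Schwarz bounds it by $\|\psi\|_{L^\infty}\|\hat h\|_{L^2_v}\,|\{v\in B_R:|\sigma|\le\epsilon\}|^{1/2}$, and applying the nondegeneracy \eqref{nond} with direction $\xi/|\xi|$ and offset $\tau/|\xi|$ gives $|\{|\sigma|\le\epsilon\}|\le K(\epsilon/|\xi|)^{\alpha}$. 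For $J_2$ (the elliptic piece), I substitute $\hat h=(i\sigma)^{-1}(\mathrm{div}_v\hat g_1+\hat g_0)$ on $\mathrm{supp}(1-\phi(\sigma/\epsilon))$ and integrate by parts in $v$ to move the divergence onto the multiplier $M_\epsilon(v):=\bigl(1-\phi(\sigma/\epsilon)\bigr)\psi(v)/(i\sigma)$.

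The gradient $\nabla_v M_\epsilon$ contains the harmless piece $(1-\phi(\sigma/\epsilon))\nabla\psi/(i\sigma)$ together with two terms proportional to $\nabla_v\sigma=(Db)^T\xi$, namely $-\phi'(\sigma/\epsilon)\nabla_v\sigma\,\psi/(i\epsilon\sigma)$ and $-(1-\phi(\sigma/\epsilon))\nabla_v\sigma\,\psi/(i\sigma^2)$. Cauchy--Schwarz in $v$ paired with the pointwise bound $1/|\sigma|\lesssim 1/\epsilon$ on the relevant support, together with the layer-cake consequence $\int_{|\sigma|>\epsilon}\sigma^{-2}\dif v\lesssim|\xi|^{-\alpha}\epsilon^{\alpha-2}$ of \eqref{nond}, produces, after setting $\epsilon=\delta|\xi|$ and absorbing $\|b\|_{L^\infty},\|Db\|_{L^2},\|\psi\|_{W^{1,\infty}}$ into $C_b$, the schematic pointwise estimate
\begin{equation*}
|\hat h_\psi|^2 \lesssim C_b^2\Bigl(\delta^{\alpha}\|\hat h\|_{L^2_v}^2 +\bigl(\delta^{-4}+\delta^{\alpha-2}\bigr)|\xi|^{-2}\bigl(\|\hat g_0\|_{L^2_v}^2+\|\hat g_1\|_{L^2_v}^2\bigr)\Bigr).
\end{equation*}
Choosing $\delta^{\alpha}:=\Xi^{-2\varsigma}$ equates $\Xi^{2\varsigma}\delta^\alpha=1$ with $\Xi^{2\varsigma}\delta^{-4}|\xi|^{-2}\sim\Xi^{2\varsigma(1+4/\alpha)}|\xi|^{-2}$, which, since $\Xi\sim|\xi|$ in this regime, equals $1$ exactly when $\varsigma=\alpha/(4+\alpha)$; Plancherel then delivers the $H^\varsigma$ bound.

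In the residual regime $|\tau|>2\|b\|_{L^\infty}|\xi|$ one has $|\sigma|\ge|\tau|/2$ uniformly in $v\in B_R$, so direct inversion followed by a single integration by parts (no cutoff needed) yields $|\hat h_\psi|^2\lesssim|\tau|^{-2}(\|\hat g_0\|^2+\|\hat g_1\|^2)+|\xi|^2|\tau|^{-4}\|Db\|_{L^2}^2\|\hat g_1\|^2$; combined with $\Xi\lesssim 1+|\tau|$ and $|\xi|\le|\tau|$, this is controlled directly. The chief obstacle throughout is the $\hat g_1$ step: because $b$ is only in $H^1$ rather than $W^{1,\infty}$, the factor $\nabla_v\sigma=(Db)^T\xi$ cannot be extracted in $L^\infty$, and one is forced to pair $\|Db\|_{L^2_v}^2$ against the pointwise $\sigma^{-4}\le\epsilon^{-4}$ on the elliptic support. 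This crude estimate is precisely what fixes the exponent $4$ in $\varsigma=\alpha/(4+\alpha)$; a stronger hypothesis $Db\in L^\infty$ would trade it for $2$ and yield the usual averaging exponent $\alpha/(2+\alpha)$.
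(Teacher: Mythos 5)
Your argument reproduces the paper's proof faithfully: Fourier transform in $(t,x)$, microlocal split by a cutoff $\phi(\sigma/\epsilon)$ (your $\epsilon$ is the paper's $m$, and your choice $\epsilon=\delta|\xi|$ with $\delta^\alpha=\Xi^{-2\varsigma}$ is the paper's $m=|\xi|^{\varrho}$ with $\varrho=(2+\alpha)/(4+\alpha)$), Cauchy--Schwarz plus the nondegeneracy bound on the hyperbolic piece, integration by parts against the multiplier $(1-\phi(\sigma/\epsilon))\psi/(i\sigma)$ on the elliptic piece, and the layer-cake estimate; the resulting balance $\delta^\alpha = \Xi^{2\varsigma}\delta^{-4}|\xi|^{-2}$ correctly forces $\varsigma=\alpha/(4+\alpha)$. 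This is essentially the same route as the paper (Lemma~\ref{VA}, Steps~1--3).

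One small inaccuracy in your closing aside: if $Db\in L^\infty$ one can still feed the measure bound from \eqref{nond} into the layer-cake integral, namely $\int_{|\sigma|\ge\epsilon/2}|Db|^2|\xi|^2\sigma^{-4}\dif v\lesssim K\|Db\|_{L^\infty}^2|\xi|^{2-\alpha}\epsilon^{\alpha-4}$, so after setting $\epsilon=\delta|\xi|$ the elliptic contribution becomes $\delta^{\alpha-4}|\xi|^{-2}$ rather than $\delta^{-2}|\xi|^{-2}$. Balancing $\Xi^{2\varsigma}\delta^\alpha=1$ against $\Xi^{2\varsigma}\delta^{\alpha-4}|\xi|^{-2}\le 1$ then gives $\varsigma=\alpha/4$ (this is the paper's Remark~\ref{rklip}), not $\alpha/(2+\alpha)$ as you state.
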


\begin{notation}
We always omit the notation of domain if the norm is over $(t,x,v)\in\mathbb{R}_t\times\mathbb{R}^{d_1}_x\times B_R$ or over $v\in B_R$. 
\end{notation}

\begin{remark}\label{rk_NG}
The nondegenerate condition \eqref{nond} roughly asserts that $b(v)$ cannot concentrate on any hyperplane, 
which forces it to move in every direction at each point. Otherwise, there will be no regularity information in $x$ derived from the equation about $h$ as well as $h_\psi$ on some sets of positive  measure, except along the normal direction of the supposed hyperplane. 
\end{remark}

\begin{remark}\label{rklip}
If $b(v)\in W^{1,\infty}(B_R,\mathbb{R}^{d_1})$, then by the similar strategy in the proof below with even simpler computation (change the parameter $m(\xi)$ by picking $m=1$ for $|\xi|\le1$ and $m=|\xi|^\frac{1}{2}$ for $|\xi|>1$), the conclusion in the above lemma can be improved to bound $\| h_\psi\|_{H^\frac{\alpha}{4}}$ with $C_b$ depending only on $d_1$, $d_2$, $K$, $R$, and $\| b\|_{W^{1,\infty}}$. 
\end{remark}

\begin{proof}
We split the proof into three steps as follows.
	
\noindent{\textbf{Step 1.}} The microlocal decomposition. \\
By performing the Fourier transform $\hat{}$ with respect to $(t,x)\rightarrow(\tau,\xi)$, we have
\begin{equation}\label{norm-h}
\| h_\psi\|_{\dot{H}^{\varsigma}}^2 =
\int_{\mathbb{R}\times\mathbb{R}^{d_1}}\big| I(\tau,\xi) \big|^2\left(|\tau|+ |\xi|\right)^{2\varsigma}\dif\tau\dif\xi,
\end{equation}
where the constant $\varsigma\in(0,1)$ is to be determined and $I(\tau,\xi)$ is defined by 
\begin{equation*}
I(\tau,\xi):=\int_{\mathbb{R}^{d_1}} \hat{h}(\tau,\xi,v)\psi(v) \dif v. 
\end{equation*}

Moreover, $\hat{h}$ satisfies the equation
\begin{equation}\label{VArelation}
{\rm i}\left(\tau+b(v)\cdot\xi\right)\hat{h}={\rm div}_v\hat{g}_1+\hat{g}_0.
\end{equation}
Since it is effective only when $\tau+b(v)\cdot\xi\neq 0$, we introduce a cut-off function $\zeta\in C^\infty_c(\mathbb{R})$ such that $0\le\zeta\le 1$, $\zeta|_{[-\frac{1}{2},\frac{1}{2}]}\equiv 1$, and $\zeta|_{[-1,1]^c}\equiv 0$. Write
\begin{equation*}
I(\tau,\xi)=I_1(\tau,\xi)+I_2(\tau,\xi)
\end{equation*} 
with 
\begin{equation*}
I_1(\tau,\xi):=\int_{B_R} \hat{h}(\tau,\xi,v)\psi(v)\zeta\left(\frac{\tau+b(v)\cdot\xi}{m}\right)\dif v ,
\end{equation*} 
\begin{equation*}
I_2(\tau,\xi):=\int_{B_R} \hat{h}(\tau,\xi,v)\psi(v)\left[1-\zeta\left(\frac{\tau+b(v)\cdot\xi}{m}\right)\right]\dif v,
\end{equation*} 
where $m=m(\xi)\ge 1$ to be determined. 

By multiplying the relation \eqref{VArelation} by $\psi(v)$ and integrating by parts, we have
\begin{equation}\label{I2w}
I_2(\tau,\xi)={\rm i}\int_{B_R}
\hat{g}_1(\tau,\xi,v)\cdot \nabla_v \left[\Phi(\tau,\xi,v)\psi(v)\right]
-\hat{g}_0(\tau,\xi,v)\Phi(\tau,\xi,v)\psi(v)\dif v,
\end{equation}
where we set
\begin{equation}\label{w(v)}
\Phi(\tau,\xi,v):=\frac{1}{\tau+b(v)\cdot\xi}\left[1-\zeta\left(\frac{\tau+b(v)\cdot\xi}{m}\right)\right].
\end{equation}

\medskip\noindent{\textbf{Step 2.}} The estimates of $I_1$ and $I_2$. \\
For $I_1(\tau,\xi)$, the integrand is vanishing whenever $|\tau+b(v)\cdot\xi|>m$ 
and hence $I_1(\tau,\xi)=0$ on $\{|\tau|> m+\overline{b}|\xi|\}$, where we abbreviate $\overline{b}:=\| b\|_{L^\infty(B_R)}$. Combining with the nondegenerate condition \eqref{nond} yields
\begin{equation*}
\int_{B_R} \zeta^2\left(\frac{\tau+b(v)\cdot\xi}{m}\right)\dif v\le K\left(\frac{m}{|\xi|}\right)^\alpha\mathbbm{1}_{|\tau|\le m+\overline{b}|\xi|}. 
\end{equation*}
Thus, we obtain
\begin{align}\label{I1}
|I_1(\tau,\xi)|^2 
\le\| \psi\|_{L^\infty}^2 \|\hat{h}\|_{L^2_v}^2 \bigg\|\zeta\left(\frac{\tau+b(v)\cdot\xi}{m}\right)\bigg\|_{L^2_v}^2
\le K\|\psi\|_{L^\infty}^2 \|\hat{h}\|_{L^2_v}^2 \left(\frac{m}{|\xi|}\right)^\alpha \mathbbm{1}_{|\tau|\le m+\overline{b}|\xi|}. 
\end{align}

For $I_2(\tau,\xi)$, observing \eqref{I2w} and \eqref{w(v)}, we have
\begin{equation}\label{I12}
|I_2(\tau,\xi)|^2 \le 
\left(\|\hat{g}_1\|_{L^2_v}^2+\|\hat{g}_0\|_{L^2_v}^2\right)
\left(\|\nabla\psi\|_{L^{\infty}}^2\|\Phi\|_{L^2_v}^2+ \|\psi\|_{L^{\infty}}^2\|\nabla_v\Phi\|_{L^2_v}^2\right) . 
\end{equation} 
By simple computation, 
\begin{equation*}
|\Phi|^2+|\nabla_v \Phi|^2
\lesssim\frac{1}{|\tau+b(v)\cdot\xi|^2}
\left[1+\left(\frac{|\xi||Db(v)|}{m}\right)^2\right]\mathbbm{1}_{|\tau+b(v)\cdot\xi|\ge\frac{m}{2}},
\end{equation*}
so that
\begin{align}\label{I2int}
\|\Phi\|_{L^2_v}^2+\|\nabla_v\Phi\|_{L^2_v}^2
&\lesssim \int_{B_R}
\left[1+\left(\frac{|\xi||Db(v)|}{m}\right)^2\right]\mathbbm{1}_{|\tau+b(v)\cdot\xi|\ge\frac{m}{2}}\frac{\dif v}{|\tau+b(v)\cdot\xi|^2}\nonumber\\
&= \int_{B_R}\dif v
\int^\infty_{|\tau+b(v)\cdot\xi|}
\left[1+\left(\frac{|\xi||Db(v)|}{m}\right)^2\right]\mathbbm{1}_{|\tau+b(v)\cdot\xi|\ge\frac{m}{2}}\frac{2\dif s}{s^3}\nonumber\\
&=\int_{\frac{m}{2}}^\infty \frac{2\dif s}{s^3}
\int_{B_R}\left[1+\left(\frac{|\xi||Db(v)|}{m}\right)^2\right]\mathbbm{1}_{\frac{m}{2}\le|\tau+b(v)\cdot\xi|\le s}\dif v.
\end{align}
Using the nondegenerate condition \eqref{nond}, we have 
\begin{align}\label{I21int}
\|\Phi\|_{L^2_v}^2+\|\nabla_v\Phi\|_{L^2_v}^2
\lesssim \int_{\frac{m}{2}}^\infty
\left[K\left(\frac{s}{|\xi|}\right)^\alpha + \frac{\overline{b'}^2|\xi|^2}{m^2}\right] \frac{\dif s}{s^3} 
\le C_b\left(\frac{m^{\alpha-2}}{|\xi|^\alpha} + \frac{|\xi|^2}{m^4}\right),
\end{align}
where we denote $\overline{b'}:=\|Db\|_{L_v^2(B_R)}$. 
We will use this estimate by setting $m=1$ when $\xi$ is near the origin. For large $|\xi|$, we are going to pick $m$ to be $|\xi|$ to some positive power. 
Besides, we have a more precise estimate when $|\tau|\ge m+\overline{b}|\xi|$, as the origin is away from the domain of the integration in \eqref{I2int} automatically. In this case, 
\begin{align}\label{I22int}
\|\Phi\|_{L^2_v}^2+\|\nabla_v\Phi\|_{L^2_v}^2
&\lesssim \int_{|\tau|-\overline{b}|\xi|}^{|\tau|+\overline{b}|\xi|} 
\left[K\left(\frac{s}{|\xi|}\right)^\alpha + \frac{\overline{b'}^2|\xi|^2}{m^2}\right] \frac{\dif s}{s^3}
+\int_{|\tau|+\overline{b}|\xi|}^{\infty}\left(|B_R|+\frac{\overline{b'}^2|\xi|^2}{m^2}\right)\frac{\dif s}{s^3} \nonumber\\
&\le C_b\left(\frac{|\tau|^{1-\alpha}|\xi|^{1-\alpha}}{(|\tau|^2-\overline{b}^2|\xi|^2)^{2-\alpha}} + \frac{|\tau||\xi|^3}{m^2(|\tau|^2-\overline{b}^2|\xi|^2)^2}
+\frac{m^2+|\xi|^2}{m^2(|\tau|^2+|\xi|^2)}\right),
\end{align}
where we applied 
\begin{equation*}
\left(|\tau|+\overline{b}|\xi|\right)^{2-\alpha}-\left(|\tau|-\overline{b}|\xi|\right)^{2-\alpha}\le 4\overline{b}|\tau|^{1-\alpha}|\xi|
\end{equation*}
in the last inequality, as we notice that $|\tau|>\overline{b}|\xi|$ and $\alpha\in (0,1]$.
It then follows from \eqref{I12}, \eqref{I21int}, and \eqref{I22int} that
\begin{align}\label{I2}
|I_2(\tau,\xi)|^2
\le& \|\psi\|_{W^{1,\infty}}^2\left(\|\hat{g}_1\|_{L_v^2}^2+\|\hat{g}_0\|_{L_v^2}^2\right)
\left(\|\Phi\|_{L^2_v}^2+\|\nabla_v\Phi\|_{L^2_v}^2\right)
\left(\mathbbm{1}_{|\tau|\le m+\overline{b}|\xi|}+\mathbbm{1}_{|\tau|\ge m+\overline{b}|\xi|}\right)\nonumber\\
\le& C_b\|\psi\|_{W^{1,\infty}}^2\left(\|\hat{g}_1\|_{L_v^2}^2+\|\hat{g}_0\|_{L_v^2}^2\right)
\Bigg[\left(\frac{m^{\alpha-2}}{|\xi|^\alpha} + \frac{|\xi|^2}{m^4}\right)\mathbbm{1}_{|\tau|\le m+\overline{b}|\xi|}\nonumber\\
&+ 
\left(\frac{|\tau|^{1-\alpha}|\xi|^{1-\alpha}}{(|\tau|^2-\overline{b}^2|\xi|^2)^{2-\alpha}} + \frac{|\tau||\xi|^3}{m^2(|\tau|^2-\overline{b}^2|\xi|^2)^2}
+\frac{m^2+|\xi|^2}{m^2(|\tau|^2+\overline{b}^2|\xi|^2)} \right)
\mathbbm{1}_{|\tau|\ge m+\overline{b}|\xi|}\Bigg].
\end{align}

Combining \eqref{I1} with \eqref{I2} and recalling $m\ge 1$, we conclude that
\begin{align}\label{optim}
|I(\tau,\xi)|^2
\le & C_b\|\psi\|_{W^{1,\infty}}^2 G(\tau,\xi)
\Bigg[\left(\frac{m^{\alpha}}{|\xi|^\alpha} + \frac{|\xi|^2}{m^4}\right)\mathbbm{1}_{|\tau|\le m+\overline{b}|\xi|}\nonumber\\
&+ \left(\frac{|\tau|^{1-\alpha}|\xi|^{1-\alpha}}{(|\tau|^2-\overline{b}^2|\xi|^2)^{2-\alpha}} + \frac{|\tau||\xi|^3}{m^2(|\tau|^2-\overline{b}^2|\xi|^2)^2}
+\frac{m^2+|\xi|^2}{m^2(|\tau|^2+\overline{b}^2|\xi|^2)}\right)\mathbbm{1}_{|\tau|\ge m+\overline{b}|\xi|}\Bigg], 
\end{align}
where we abbreviate $G(\tau,\xi):=\|\hat{h}\|_{L_v^2}^2+\|\hat{g}_1\|_{L_v^2}^2+\|\hat{g}_0\|_{L_v^2}^2$. 

\medskip\noindent{\textbf{Step 3.}} Tracking in appropriate microlocal domains. \\
For $|\xi|\le 1$, we pick $m=1$ in the estimate \eqref{optim} so that
\begin{align}\label{Ixi1}
|I(\tau,\xi)|^2
\le&  C_b\|\psi\|_{W^{1,\infty}}^2 G(\tau,\xi)
\Bigg[\left(|\xi|^{-\alpha}+|\xi|^2\right)\mathbbm{1}_{|\tau|\le 1+\overline{b}|\xi|} \nonumber\\
&+ \left(\frac{|\tau|^{1-\alpha}|\xi|^{1-\alpha}}{(|\tau|^2-\overline{b}^2|\xi|^2)^{2-\alpha}} + \frac{|\tau||\xi|^3}{(|\tau|^2-\overline{b}^2|\xi|^2)^2}
+\frac{1+|\xi|^2}{|\tau|^2+\overline{b}^2|\xi|^2} \right)\mathbbm{1}_{|\tau|\ge 1+\overline{b}|\xi|}\Bigg].
\end{align}
As for $|\xi|\ge 1$, we pick $m=|\xi|^{\varrho}$ with $\varrho\in(0,1)$ to be determined so that
\begin{align}\label{Ixi2}
|I(\tau,\xi)|^2
\le&  C_b\|\psi\|_{W^{1,\infty}}^2 G(\tau,\xi)
\Bigg[\left(|\xi|^{\alpha(\varrho-1)}+|\xi|^{2(1-2\varrho)}\right)\mathbbm{1}_{|\tau|\le |\xi|^\varrho+\overline{b}|\xi|} \nonumber\\
&+ \left(\frac{|\tau|^{1-\alpha}|\xi|^{1-\alpha}}{(|\tau|^2-\overline{b}^2|\xi|^2)^{2-\alpha}} + \frac{|\tau||\xi|^{3-2\varrho}}{(|\tau|^2-\overline{b}^2|\xi|^2)^2}
+\frac{1+|\xi|^{2-2\varrho}}{|\tau|^2+\overline{b}^2|\xi|^2} \right)\mathbbm{1}_{|\tau|\ge |\xi|^\varrho+\overline{b}|\xi|}\Bigg].
\end{align}
Recalling \eqref{norm-h}, we split the quantity $\| h_\psi\|^2_{\dot{H}^{\varsigma}}=\Sigma_{i=1}H_i$ as the following ordinal five terms: 
\begin{equation*}
\begin{split}
H_1+H_2+H_3+H_4+H_5:=
\int\big|I\big|^2|\tau|^{2\varsigma}\mathbbm{1}_{|\xi|\le1}\mathbbm{1}_{|\tau|\le 1+\overline{b}}\dif\tau\dif\xi
+\int\big|I\big|^2|\tau|^{2\varsigma}\mathbbm{1}_{|\xi|\le1}\mathbbm{1}_{|\tau|>1+\overline{b}}\dif\tau\dif\xi&\\
+\int\big|I\big|^2|\xi|^{2\varsigma}\mathbbm{1}_{|\xi|\le 1}\dif\tau\dif\xi
+\int\big|I\big|^2|\tau|^{2\varsigma}\mathbbm{1}_{|\xi|> 1}\dif\tau\dif\xi
+\int\big|I\big|^2|\xi|^{2\varsigma}\mathbbm{1}_{|\xi|>1}\dif\tau\dif\xi&.
\end{split}
\end{equation*}
Observing $I(\tau,\xi)\le\|\hat{h}(\tau,\xi,\cdot)\|_{L^2}\|\psi\|_{L^2}$ due to H\"older's inequality, we know that $H_1$ and $H_3$ can be controlled by $C_b\| h\|_{L^2}^2\|\psi\|_{L^2}^2$. 

As for $H_2$, using \eqref{Ixi1}, we get
\begin{equation*}
\begin{split}
H_2
\le& C_b\|\psi\|_{W^{1,\infty}}^2\int G(\tau,\xi)
\mathbbm{1}_{|\tau|\ge 1+\overline{b}}\mathbbm{1}_{|\xi|\le 1}\\
&\cdot\left(\frac{|\tau|^{1-\alpha}|\xi|^{1-\alpha}}{(|\tau|^2-\overline{b}^2|\xi|^2)^{2-\alpha}} 
 + \frac{|\tau||\xi|^3}{(|\tau|^2-\overline{b}^2|\xi|^2)^2}
 +\frac{1+|\xi|^2}{|\tau|^2+\overline{b}^2|\xi|^2} \right)
|\tau|^{2\varsigma}\dif\tau\dif\xi .
\end{split}
\end{equation*}
Notice that the integral above is effective only over $\{|\xi|\le 1\}\cap\{|\tau|\ge1+\overline{b}\}$. For any fixed $|\xi|\le 1$, the functions 
\begin{equation}\label{VAfunction}
\frac{|\tau|^{1-\alpha}|\tau|^{2\varsigma}}{(|\tau|^2-\overline{b}^2|\xi|^2)^{2-\alpha}}  \quad{\rm and}\quad
\frac{|\tau||\tau|^{2\varsigma}}{(|\tau|^2-\overline{b}^2|\xi|^2)^2}
\end{equation}
are both decreasing with respect to $|\tau|$, so they are bounded by the values that they take at $|\tau|=1+\overline{b}$. 
Also, the left function in the integrand
\begin{equation*}
\frac{\big(1+|\xi|^2\big)|\tau|^{2\varsigma}}{|\tau|^2+\overline{b}^2|\xi|^2}
\le \frac{2|\tau|^{2\varsigma}}{|\tau|^2}
\le 2
\end{equation*}
in the effective set and hence the term  $H_2$ can be estimated.

For the remaining terms $H_4$ and $H_5$, it suffices to check that the powers of the multipliers $|\tau||\xi|$ in the integrands are all nonpositive. More precisely, by \eqref{Ixi2}, 
\begin{equation*}
\begin{split}
H_4
\le&  C_b\|\psi\|_{W^{1,\infty}}^2\int G(\tau,\xi)
\Bigg[\left(|\xi|^{\alpha(\varrho-1)}+|\xi|^{2(1-2\varrho)}\right)\mathbbm{1}_{|\tau|\le |\xi|^\varrho+\overline{b}|\xi|} \\
&+ \left(\frac{|\tau|^{1-\alpha}|\xi|^{1-\alpha}}{(|\tau|^2-\overline{b}^2|\xi|^2)^{2-\alpha}} + \frac{|\tau||\xi|^{3-2\varrho}}{(|\tau|^2-\overline{b}^2|\xi|^2)^2}
+\frac{1+|\xi|^{2-2\varrho}}{|\tau|^2+\overline{b}^2|\xi|^2} \right)\mathbbm{1}_{|\tau|\ge |\xi|^\varrho+\overline{b}|\xi|}\Bigg] |\tau|^{2\varsigma}\mathbbm{1}_{|\xi|\ge 1}\dif\tau\dif\xi.
\end{split}
\end{equation*}
We choose $\varsigma$ and $\varrho$ verifying $\alpha(\varrho-1)+2\varsigma=0$ and $2(1-2\varrho)+2\varsigma=0$, which gives 
$$\varsigma=\frac{\alpha}{4+\alpha}\quad{\rm and}\quad \varrho=\frac{2+\alpha}{4+\alpha}.$$ 
Besides, the two functions defined in \eqref{VAfunction} appearing in the above integrand are controlled by the values that they take at $|\tau|=|\xi|^\varrho+\overline{b}|\xi|$. 
With $|\tau|\ge |\xi|^\varrho+\overline{b}|\xi|$ and $|\xi|\ge1$, the left function in the integrand 
\begin{equation*}
\frac{\big(1+|\xi|^{2-2\varrho}\big)|\tau|^{2\varsigma}}{|\tau|^2+\overline{b}^2|\xi|^2}
\le \frac{|\xi|^{4-4\varrho}+2|\tau|^{4\varsigma}}{|\tau|^2+\overline{b}^2|\xi|^2}
\le C_b\left(|\xi|^{2-4\varrho} +|\tau|^{4\varsigma-2}\right)\le 2C_b. 
\end{equation*}
It turns out that 
\begin{equation*}
\begin{split}
H_4
&\le C_b\|\psi\|_{W^{1,\infty}}^2\int G(\tau,\xi)
\left(1+\frac{|\xi|^{1-\alpha}|\xi|^{1-\alpha}|\xi|^{2\varsigma}}{(|\xi|^{1+\varrho})^{2-\alpha}} + \frac{|\xi||\xi|^{3-2\varrho}|\xi|^{2\varsigma}}{(|\xi|^{1+\varrho})^2}
\right) \mathbbm{1}_{|\xi|\ge 1}\dif\tau\dif\xi \\
&\le C_b\|\psi\|_{W^{1,\infty}}^2\left(\| h\|_{L^2}^2+\| g_0\|_{L^2}^2+\| g_1\|_{L^2}^2\right).
\end{split}
\end{equation*}
Similarly, we can treat the term $H_5$ and hence bound the quantity $\| h_\psi\|^2_{\dot{H}^{\varsigma}}$. 
This completes the proof. 
\end{proof}

\subsection{Hypoelliptic transfer of regularity} 
With the help of the above velocity averaging lemma, the hypoelliptic methodology given in \cite{Bo} allows us to see that the extra regularity of the solution in velocity can be transferred to time-space variables. 

\begin{lemma}\label{VAL}
Under the same assumption as in Lemma~\ref{VA}, in addition provided that  
\begin{equation}\label{VAderivative v}
h\in L^2_{t,x}\left(H^\delta_v(B_R)\right),
\end{equation}
for some $\delta\in(0,1]$, then for $\gamma:=\frac{2\alpha\delta}{(4+\alpha)(2+2\delta+d_2)}$ and for any ball $B_r$ with $r\in(0,R)$, we have
\begin{equation*}\label{VACor}
\| h\|_{{H}^\gamma(\mathbb{R}\times\mathbb{R}^{d_1}\times B_r)}\le
C_b\left(\| h\|_{L^2_{t,x}(H_v^\delta)}+\| g_1\|_{L^2}+\| g_0\|_{L^2}\right), 
\end{equation*}
where the constant $C_b>0$ depends on $d_1$, $d_2$, $K$, $R-r$, $\| b\|_{L^\infty}$, and $\| Db\|_{L^2}$. 
\end{lemma}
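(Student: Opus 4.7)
The strategy is to combine Lemma~\ref{VA} with a $v$-mollification and a dyadic interpolation in the $(t,x)$-frequency variable. I would first localize: pick a cutoff $\chi\in C_c^\infty(B_R)$ equal to $1$ on $B_r$ and set $\tilde h:=\chi h$. Since $\chi$ depends only on $v$, the function $\tilde h$ satisfies a transport equation of the same shape with $L^2$ sources $\chi g_1$ and $\chi g_0-(\nabla_v\chi)\cdot g_1$, to which Lemma~\ref{VA} applies. Because $\gamma\le\delta$, the $v$-component of the target Sobolev norm is controlled automatically by $\|\tilde h\|_{L^2_{t,x}H^\gamma_v}\lesssim\|h\|_{L^2_{t,x}H^\delta_v}$. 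It therefore suffices to bound the pure $(t,x)$-regularity $\|\tilde h\|_{L^2_v H^\gamma_{t,x}}$.

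For this, introduce a standard mollifier $\psi_\epsilon(v):=\epsilon^{-d_2}\psi_0(v/\epsilon)$ and set $\tilde h_\epsilon:=\tilde h *_v\psi_\epsilon$. For each fixed $v\in B_r$, the expression $\tilde h_\epsilon(\cdot,\cdot,v)=\int\tilde h(\cdot,\cdot,w)\,\psi_\epsilon(v-w)\dif w$ is a bona fide velocity average of $\tilde h$ against the test function $w\mapsto\psi_\epsilon(v-w)$, so Lemma~\ref{VA} applies pointwise in $v$; integrating over the bounded set $B_r$ yields $\|\tilde h_\epsilon\|_{L^2_v H^\varsigma_{t,x}}\lesssim \epsilon^{-\sigma}M$, where $M:=\|h\|_{L^2}+\|g_0\|_{L^2}+\|g_1\|_{L^2}$ and $\sigma$ is the $\epsilon$-power of the relevant test-function norm. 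Separately, the $H^\delta_v$ hypothesis delivers the classical mollification-error bound $\|\tilde h-\tilde h_\epsilon\|_{L^2_{t,x,v}}\lesssim\epsilon^\delta\|h\|_{L^2_{t,x}H^\delta_v}$. To combine the two, I would decompose $(\tau,\xi)$ dyadically and, on each annulus $|(\tau,\xi)|\sim 2^j$, choose $\epsilon_j=2^{-\varsigma j/(\delta+\sigma)}$ so that the two pieces balance. Plancherel in $(t,x)$ then gives, for each annulus, a bound of the form $2^{-2\varsigma j}\epsilon_j^{-2\sigma}M^2+\epsilon_j^{2\delta}\|h\|_{L^2_{t,x}H^\delta_v}^2$, and summing $2^{2\gamma j}$ times this over $j\ge 0$ produces a geometric series that converges precisely when $\gamma<\varsigma\delta/(\delta+\sigma)$.

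The main obstacle is to identify the sharp value $\sigma=1+d_2/2$ that matches the stated $\gamma=\frac{2\alpha\delta}{(4+\alpha)(2+2\delta+d_2)}$. Reading Lemma~\ref{VA} off the shelf with $\|\psi_\epsilon\|_{W^{1,\infty}}\sim\epsilon^{-(1+d_2)}$ only gives $\sigma=1+d_2$ and a weaker exponent. To extract the sharper scaling, one revisits the key estimate~\eqref{I12} in the proof of Lemma~\ref{VA}: rather than pulling $\|\psi\|_{L^\infty_v}$ out of the $v$-Cauchy--Schwarz, keep $\|\Phi\|_{L^\infty_v}\le 2/m$ and pair it with $\|\psi\|_{L^2_v}$ and $\|\nabla_v\psi\|_{L^2_v}$. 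The effective dependence then becomes $\|\psi\|_{H^1(\mathbb{R}^{d_2})}$, which for $\psi_\epsilon$ scales as $\epsilon^{-(1+d_2/2)}$, yielding the desired $\sigma$ and closing the argument when combined with the $v$-direction bound from the first paragraph.
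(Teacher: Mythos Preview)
Your overall architecture is exactly the paper's: mollify in $v$, split $h=(h-h*_v\rho_\epsilon)+h*_v\rho_\epsilon$, bound the first piece by $\epsilon^\delta\|h\|_{L^2_{t,x}H^\delta_v}$, run the machinery of Lemma~\ref{VA} on the second, and then choose $\epsilon$ as a function of the $(t,x)$-frequency (your dyadic choice $\epsilon_j=2^{-\varsigma j/(\delta+\sigma)}$ is just a discrete version of the paper's $\epsilon=(|\tau|+|\xi|)^{-\gamma/\delta}$). You also correctly identify that the whole game is to obtain the scaling $\sigma=1+d_2/2$ rather than the naive $\sigma=1+d_2$.

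The gap is in the mechanism you propose for that improvement. Revisiting \eqref{I12} and ``keeping $\|\Phi\|_{L^\infty_v}\le 2/m$'' only treats the pieces $\int\hat g_1\cdot\Phi\,\nabla_v\psi$ and $\int\hat g_0\,\Phi\psi$. The cross-term $\int\hat g_1\cdot(\nabla_v\Phi)\,\psi$ forces either $\nabla_v\Phi\in L^\infty_v$---which requires $Db\in L^\infty$, whereas the lemma only assumes $Db\in L^2$---or else $\psi\in L^\infty$, which reinstates the bad $\epsilon^{-d_2}$. The $I_1$-piece \eqref{I1} has the same defect: pointwise in $v$ there is no way to avoid $\|\psi_\epsilon\|_{L^\infty}$ while still using the nondegeneracy bound on $\|\zeta\|_{L^2_v}$. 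So your pointwise modification cannot deliver $\sigma=1+d_2/2$ under the stated hypotheses.

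The paper's fix is not a pointwise one. It keeps the same Cauchy--Schwarz as in Lemma~\ref{VA} and then integrates $|J(\tau,\xi,v)|^2$ in $v$ \emph{before} estimating; Fubini and the convolution structure give, e.g.,
\[
\int_v\!\!\int_w |\nabla_w\Phi(w)|^2|\rho_\epsilon(v-w)|^2\,dw\,dv=\|\nabla_v\Phi\|_{L^2_v}^2\|\rho_\epsilon\|_{L^2}^2,
\]
and similarly for the other terms (this is \eqref{J1}--\eqref{J2}). In this way the $\rho_\epsilon$-dependence becomes $\|\rho_\epsilon\|_{L^2}^2+\|\nabla\rho_\epsilon\|_{L^2}^2\sim\epsilon^{-d_2}+\epsilon^{-(d_2+2)}$ while $\|\Phi\|_{L^2_v}^2+\|\nabla_v\Phi\|_{L^2_v}^2$ (and hence the nondegeneracy estimate \eqref{I21int}) is preserved verbatim, using only $Db\in L^2$. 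Plugging this into your interpolation gives exactly $\gamma=\varsigma\delta/(\delta+1+d_2/2)=\frac{2\alpha\delta}{(4+\alpha)(2+2\delta+d_2)}$, so once you replace your pointwise trick by this Fubini step the rest of your argument goes through.
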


\begin{remark}\label{rkVAL}
The regularizing effect produced in the lemma is not optimal. 
For instance, by adjusting the parameters in the proof below with $\epsilon(\tau,\xi)=[\epsilon_0(|\tau|+|\xi|)]^{-\frac{\gamma}{\delta}}$ for some small $\epsilon_0>0$ 
and $m(\xi)$ given by $m=1$ for $|\xi|\le1$, $m=|\xi|^\varrho$ for $|\xi|>1$, where $\varrho:=\frac{2\alpha\delta+2d_2+\alpha d_2}{2\alpha\delta+4d_2+\alpha d_2}$, 
it turns out that $h\in H^{\gamma}$ with $\gamma=\frac{2\alpha\delta}{2\alpha\delta+4d_2+\alpha d_2}$. 
Moreover, as in Remark \ref{rklip}, if $\|b\|_{W^{1,\infty}}$ is bounded, we may pick $\varrho:=\frac{\alpha\delta+d_2}{\alpha\delta+2d_2}$ here to deduce that $h\in H^{\gamma}$ with $\gamma=\frac{\alpha\delta}{\alpha\delta+2d_2}$. 
We are not going to show the detailed computation because of its tediousness. 
\end{remark}

\begin{remark}\label{rkNA}
Consider the case that $b(v)\in W^{1,\infty}(B_R,\mathbb{R}^{d_1})$ satisfies the nondegenerate condition \eqref{nond} on any rescaled and translated balls, that is, for any $B_r(v_0)$ with $\overline{B}_{r}(v_0)\subset B_R$, 
\begin{equation}\label{NA}
\forall \mu\in\mathbb{R},\ \ \forall \nu\in\mathbb{S}^{{d_1}-1}, \ \forall \epsilon>0,\ \  
\big|\{ v\in B_r(v_0): | \mu+b(v)\cdot\nu|\le \epsilon\}\big| 
\lesssim r^{{d_2}-1}\epsilon^\alpha. 
\end{equation}
Then, for any fixed cut-off function $\zeta\in C^\infty_c(\mathbb{R})$ and fixed nonnegative function $\rho\in C^\infty_c(B_1)$, 
\begin{align*}
\int_{B_\epsilon(v)} \zeta^2\left(\frac{\tau+b(w)\cdot\xi}{m}\right) \frac{1}{\epsilon^{d_2}}\rho\left(\frac{v-w}{\epsilon}\right)\dif w
\lesssim \frac{1}{\epsilon}\left(\frac{m}{|\xi|}\right)^\alpha\mathbbm{1}_{|\tau|\le m+|\xi|\|b\|_{L^\infty}}. 
\end{align*}
In view of this type of estimate (in contrast to \eqref{J1}, \eqref{J2}), by picking $\epsilon(\tau,\xi)=[\epsilon_0(|\tau|+|\xi|)]^{-\frac{\gamma}{\delta}}$ for some small $\epsilon_0>0$ and picking $m=1$ for $|\xi|\le1$, $m=|\xi|^\frac{1+\alpha\delta}{2+\alpha\delta}$ for $|\xi|>1$ in the following proof, 
we will derive $h\in H^{\gamma}$ with $\gamma=\frac{\alpha\delta}{2+\alpha\delta}$. 
Noticing that setting $\alpha=1$ in \eqref{NA} gives the nondegenerate condition \eqref{N}, we then have $\gamma=\frac{\delta}{2+\delta}$. 
In particular, this coincides with the regularity result given in \cite{Bo} in the case of $b(v)=v$. 
\end{remark}

Let us turn to the proof of the lemma. 
\begin{proof}
For the sake of clarity, we assume $h$, $g_1$, and $g_0$ are compactly supported in $v\in B_R$ by a simple localization argument (multiply the solution with a cut-off function). 
We remark that if they are already compactly supported with respect to $v$, then the constant $C_b$ in the conclusion above will not depend on $R-r$. 

Introduce a sequence of radial functions $\{\rho_\epsilon(v)\}_{\epsilon>0}\subset C^\infty_c(\mathbb{R}^{d_2})$ such that
\begin{equation*}
 \rho_1\ge 0,\quad 
 {\rm supp\ }\rho_1\subset B_1,\quad 
 \int_{\mathbb{R}^{d_2}}\rho_1(v)\dif v=1 \quad {\rm and}\quad 
\rho_\epsilon(v)=\frac{1}{\epsilon^{d_2}}\rho_1\left(\frac{v}{\epsilon}\right).
\end{equation*}
Write 
\begin{equation}\label{VAsplit}
h(t,x,v)=\left(h-h*_v\rho_\epsilon\right)(t,x,v)+h*_v\rho_\epsilon(t,x,v). 
\end{equation}

We will see that in the averaging (with respect to $v$) sense, one can extract a factor $\epsilon$ to some positive power from the first term on the right-hand side of \eqref{VAsplit} due to our assumption \eqref{VAderivative v}. 
Meanwhile, a factor $\epsilon$ to some negative power can be extracted from the second term by following the demonstration of the previous lemma with slight adjustment in the argument. 
Afterward, the regularity of the solution $h$ itself will be obtained by identifying the parameter $\epsilon=\epsilon(\tau,\xi)$ which is defined in the Fourier space of $t,x$. 

By performing the Fourier transforms $\mathscr{F}$ in all variables $(t,x,v)\rightarrow(\tau,\xi,\eta)$ and $\hat{}$ with respect to $(t,x)\rightarrow(\tau,\xi)$, we have 
\begin{equation*}
\|h\|_{L^2_v\dot{H}^{\gamma}_{t,x}}
\le \big\|(|\tau|+|\xi|)^{\gamma}|\mathscr{F}\left(h-h*_v\rho_{\epsilon(\tau,\xi)}\right)|\big\|_{L^2_{\tau,\xi,\eta}}
+\big\|(|\tau|+|\xi|)^\gamma \big|\hat{h}*_v\rho_{\epsilon(\tau,\xi)}\big|\big\|_{L^2_{\tau,\xi,v}}. 
\end{equation*}

First, let us treat the term about $h-h*_v\rho_\epsilon$. We denote $\mathscr{F}_v$ the Fourier transform with respect to $v\rightarrow\eta$. It turns out that
\begin{align*}
|\mathscr{F}\left(h-h*_v\rho_\epsilon\right)|&= |1-(\mathscr{F}_v\rho_1)(\epsilon\eta)||\mathscr{F}h|\\
&\lesssim |\eta|^\delta|\mathscr{F}h| \sup_{|\eta|\le\epsilon^{-1}}\left(|\eta|^{-\delta}|\epsilon\eta|\right) +
 |\eta|^\delta|\mathscr{F}h| \sup_{|\eta|>\epsilon^{-1}}|\eta|^{-\delta}
=2\epsilon^\delta|\eta|^\delta|\mathscr{F}h|, 
\end{align*}
where we used the facts that $\mathscr{F}_v\rho_1(0)=1$ and $\mathscr{F}_v\rho_1(\eta)$, as well as its derivatives, is bounded in $\mathbb{R}_\eta^{d_2}$. 
Then, with $\gamma>0$ to be determined, selecting the parameter $\epsilon=(|\tau|+|\xi|)^{-\frac{\gamma}{\delta}}$ and using the assumption \eqref{VAderivative v} yield
\begin{align}\label{VAs1}
\big\|(|\tau|+|\xi|)^{\gamma}|\mathscr{F}\left(h-h*_v\rho_\epsilon\right)|\big\|_{L^2_{\tau,\xi,\eta}}
\lesssim \|h\|_{L_{t,x}^2\dot{H}_v^\delta}. 
\end{align}

Next, for the term about $h*_v\rho_\epsilon$, we write 
\begin{equation*}
\big\|(|\tau|+|\xi|)^\gamma \big|\hat{h}*_v\rho_\epsilon\big|\big\|_{L^2_{\tau,\xi,v}}^2
= \int \big| J(\tau,\xi,v) \big|^2\left(|\tau|+|\xi|\right)^{2\gamma}  \dif\tau\dif\xi\dif v,
\end{equation*}
where we set
\begin{equation*}
J(\tau,\xi,v):=\int \hat{h}(\tau,\xi,w)\rho_\epsilon(v-w) \dif w. 
\end{equation*}
Consider the decomposition $J(\tau,\xi,v)=J_1(\tau,\xi,v)+J_2(\tau,\xi,v)$ as follows. With a cut-off function $\zeta$ supported in $[-1,1]$ valued in $[0,1]$ such that $\zeta|_{[-\frac{1}{2},\frac{1}{2}]}\equiv 1$, 
as well as taking $m(\xi)$ such that $m=1$ for $|\xi|\le1$ and $m=|\xi|^{\frac{2+\alpha}{4+\alpha}}$ for $|\xi|>1$ as before, we set
\begin{equation*}
J_1(\tau,\xi,v):=\int \hat{h}(\tau,\xi,w)\rho_\epsilon(v-w)\zeta\left(\frac{\tau+b(w)\cdot\xi}{m}\right)\dif w,
\end{equation*} 
By the relation \eqref{VArelation}, 
\begin{equation*}
J_2(\tau,\xi,v)
={\rm i}\int
\hat{g}_1(\tau,\xi,w)\cdot \nabla_v \left[\Phi(\tau,\xi,w)\rho_\epsilon(v-w)\right]
-\hat{g}_0(\tau,\xi,w)\Phi(\tau,\xi,w)\rho_\epsilon(v-w)\dif w,
\end{equation*} 	
where $\Phi$ is provided by \eqref{w(v)}. In such a setting, we only need to replace the estimates of $|I_1|$ and $|I_2|$ as shown in \eqref{I1} and \eqref{I12} in the previous lemma by the estimates of $\| J_1\|_{L^2_v}$ and $\| J_2\|_{L^2_v}$. 
More precisely, 
\begin{align}\label{J1}
\int |J_1(\tau,\xi,v)|^2\dif v
&\le\int  \big\|\hat{h}(\tau,\xi,\cdot)\big\|_{L^2}^2
\bigg\|\zeta\left(\frac{\tau+b(\cdot)\cdot\xi}{m}\right)\rho_\epsilon(v-\cdot)\bigg\|_{L^2}^2\dif v  \nonumber\\
&\lesssim\frac{1}{\epsilon^{d_2}}\|\hat{h}(\tau,\xi,v)\|_{L^2_v}^2\bigg\|\zeta\left(\frac{\tau+b(v)\cdot\xi}{m}\right)\bigg\|_{L^2_v}^2.
\end{align}
In like manner, 
\begin{align}\label{J2}
\int |J_2(\tau,\xi,v)|^2\dif v
&\le \left(\|\hat{g}_1\|_{L^2_v}^2+\|\hat{g}_0\|_{L^2_v}^2\right)
\left(\|\nabla\rho_\epsilon\|_{L^2}\|\Phi\|_{L^2_v}^2+ \|\rho_\epsilon\|_{L^2}\|\nabla_v\Phi\|_{L^2_v}^2\right) \nonumber\\
&\lesssim \frac{1+\epsilon^2}{\epsilon^{d_2+2}}\left(\|\hat{g}_1\|_{L^2_v}^2+\|\hat{g}_0\|_{L^2_v}^2\right)
\left(\|\Phi\|_{L^2_v}^2+\|\nabla_v\Phi\|_{L^2_v}^2\right). 
\end{align}

Combining \eqref{J1} with \eqref{J2}, as well as estimating the terms $\|\zeta\left((\tau+b(v)\cdot\xi)/m\right)\|_{L^2_v}$, $\|\Phi\|_{L^2_v}^2+\|\nabla_v\Phi\|_{L^2_v}^2$ and performing the subsequent operations as before, we conclude that 
\begin{align}\label{VAs2}
\big\|(|\tau|+|\xi|)^\gamma \big|\hat{h}*_v\rho_\epsilon\big|\big\|_{L^2_{\tau,\xi,v}}^2
=& \int |J(\tau,\xi,v)|^2 \left(|\tau|+|\xi|\right)^{\frac{2\alpha}{4+\alpha}} \left(|\tau|+|\xi|\right)^{2\gamma-\frac{2\alpha}{4+\alpha}} \dif\tau\dif\xi \nonumber\\
\lesssim& \left(\| h\|_{L^2}^2+\| g_1\|_{L^2}^2+\| g_0\|_{L^2}^2\right)
\sup_{|\tau|+|\xi|\le1}\left(|\tau|+|\xi|\right)^{2\gamma-\frac{2\alpha}{4+\alpha} +\frac{d_2\gamma}{\delta}}\nonumber\\
&+\left(\| h\|_{L^2}^2+\| g_1\|_{L^2}^2+\| g_0\|_{L^2}^2\right)
\sup_{|\tau|+|\xi|>1}\left(|\tau|+|\xi|\right)^{2\gamma-\frac{2\alpha}{4+\alpha}+\frac{(d_2+2)\gamma}{\delta}} \nonumber\\
\lesssim& \| h\|_{L^2}^2+\| g_1\|_{L^2}^2+\| g_0\|_{L^2}^2, 
\end{align}
where we choose $\gamma$ such that $2\gamma-\frac{2\alpha}{4+\alpha}+\frac{(d_2+2)\gamma}{\delta}=0$. 

From \eqref{VAs1}, \eqref{VAs2}, and the condition \eqref{VAderivative v}, we derive the desired result. 
\end{proof}

\section{Local boundedness}\label{sectionbdd}
Both De Giorgi's approach and Moser's approach toward Theorem~\ref{bdd} are presented in this section. 

They both begin with exhibiting a gain of integrability for solutions. To reach an $L^2$-$L^\infty$ estimate, De Giorgi did the estimate with a monotone truncation to obtain a recurrence relation, whereas Moser intended to construct an integrability-gaining iteration procedure directly (see a more precise explanation below). 
By comparing with each other, one is able to see that the ideas involved are essentially the same. 
By contrast with \cite{GIMV}, we can also apply Moser's method to treat general bounded solutions to the equation with a source term. 

In view of the weak formulation of \eqref{FP}, a simple computation and approximation procedure yields the following fact. 
\begin{lemma}\label{subsol}
	If $f$ is a subsolution to \eqref{FP} in $Q_1$, 
	then for any nonnegative convex function $\chi\in W^{1,\infty}(\mathbb{R})$ with $\chi'\ge0$, $\chi(f)$ is also a subsolution to the same equation with the souce term ${s}$ replaced by ${s}\chi'(f)$. 
\end{lemma}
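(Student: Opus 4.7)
The plan is to test the subsolution inequality against $\chi'(f)\phi$, for $\phi\ge 0$ a standard test function, and to exploit the chain rule together with convexity of $\chi$ and ellipticity of $A$ to produce a second-order error term with a favourable sign that can be dropped.

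Formally, multiplying the pointwise inequality
\[
(\partial_t+b\cdot\nabla_x)f\le\mathrm{div}_v(A\nabla_v f)+B\cdot\nabla_v f+s
\]
by $\chi'(f)\ge 0$ and using the identities $\chi'(f)(\partial_t+b\cdot\nabla_x)f=(\partial_t+b\cdot\nabla_x)\chi(f)$, $\chi'(f)\nabla_v f=\nabla_v\chi(f)$, and
\[
\chi'(f)\,\mathrm{div}_v(A\nabla_v f)=\mathrm{div}_v(A\nabla_v\chi(f))-\chi''(f)\,A\nabla_v f\cdot\nabla_v f
\]
should yield
\[
(\partial_t+b\cdot\nabla_x)\chi(f)+\chi''(f)\,A\nabla_v f\cdot\nabla_v f\le\mathrm{div}_v(A\nabla_v\chi(f))+B\cdot\nabla_v\chi(f)+s\chi'(f).
\]
Convexity of $\chi$ gives $\chi''\ge 0$ and \eqref{H} gives $A\ge\lambda I$, so the remainder is nonnegative and may be discarded, leaving exactly the required inequality.

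To make this rigorous, I would first replace $\chi$ by a sequence $\chi_\varepsilon\in C^2(\mathbb{R})$ of convex, nondecreasing functions with $\chi_\varepsilon'',\chi_\varepsilon'\ge 0$, $\|\chi_\varepsilon'\|_{L^\infty}\le\|\chi'\|_{L^\infty}$, and $\chi_\varepsilon,\chi_\varepsilon'\to\chi,\chi'$ locally uniformly (for instance by mollifying $\chi'$). For nonnegative $\phi\in C_c^\infty(Q_1)$, the function $\chi_\varepsilon'(f)\phi$ is nonnegative, compactly supported, bounded, and lies in $L^2_{t,x}(H^1_v)$ with $\nabla_v(\chi_\varepsilon'(f)\phi)=\chi_\varepsilon''(f)\phi\,\nabla_v f+\chi_\varepsilon'(f)\nabla_v\phi$, hence is admissible in the weak formulation. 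Plugging it in, using the Sobolev chain rule in $v$ to identify the elliptic terms, and discarding the nonnegative quadratic remainder gives the weak subsolution inequality for $\chi_\varepsilon(f)$ with source $s\chi_\varepsilon'(f)$; passing $\varepsilon\to 0$ by dominated convergence then recovers the claim.

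The only delicate point I expect is the transport identity
\[
\langle(\partial_t+b\cdot\nabla_x)f,\chi_\varepsilon'(f)\phi\rangle=\langle(\partial_t+b\cdot\nabla_x)\chi_\varepsilon(f),\phi\rangle,
\]
read in the duality between $L^2_{t,x}(H^{-1}_v)$ and $L^2_{t,x}(H^1_v)$ that underlies the definition of weak solutions. My plan here is to exploit that $b$ depends only on $v$: mollification of $f$ in the $(t,x)$ variables commutes exactly with $\partial_t+b\cdot\nabla_x$, so no DiPerna–Lions commutator lemma is needed. For $f_\delta:=f\ast_{t,x}\omega_\delta$ the transport derivative acts classically and the pointwise chain rule applies since $\chi_\varepsilon\in C^2$; passing $\delta\to 0$, using $f_\delta\to f$ in $L^2_{t,x}(H^1_v)$ together with $(\partial_t+b\cdot\nabla_x)f_\delta\to(\partial_t+b\cdot\nabla_x)f$ in $L^2_{t,x}(H^{-1}_v)$, yields the identity. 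Everything else is routine chain-rule bookkeeping.
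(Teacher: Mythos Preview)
Your proposal is correct and follows precisely the ``simple computation and approximation procedure'' that the paper invokes without details: test against $\chi'(f)\phi$, use convexity to drop the nonnegative quadratic term, and justify the chain rules by regularising $\chi$ and mollifying $f$ in $(t,x)$ (which commutes with the transport operator since $b$ depends only on $v$). There is nothing to add; the paper gives no proof beyond that one sentence, and you have filled in exactly the standard argument it alludes to.
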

In particular, for any subsolution $f$ of \eqref{FP}, if $\chi(f)=f^+$, then $\chi'(f)=\mathbbm{1}_{f\ge 0}$ and $\chi(f)$ is also a subsolution to the same equation with ${s}$ replaced by ${s}\mathbbm{1}_{f\ge 0}$.

\subsection{Moser's approach}
In the elliptic setting, Moser's strategy runs as follows. As we choose the test function to be a cut-off function multiplied with a power of the solution, we can obtain an $L^2$-energy estimate for a power of the solution, that is, an estimate of the $L^2$-norm of the derivative of a power of the solution in term of the $L^2$-norm of the same power of the solution itself. 
Then, the Sobolev inequality implies an estimate of the $L^{p_2}$-norm of the solution in term of its $L^{p_1}$-norm with ${p_2}> {p_1}$, that is, a reversed H\"{o}lder's inequality. With a gain of higher and higher $L^{p_n}$-integrability and a careful choice of the cut-off functions, $p_n$ goes to infinity and boundedness is reached. 

However, standard elliptic equation techniques (see \cite{GT} and \cite{HL}, for instance) only enable us to achieve the $L^2$-energy estimate with respect to the velocity variable $v$. 
Instead, the hypoelliptic structure of \eqref{FP} will also allow us to prove $H^\gamma$-bound on a barrier function that dominates solutions of \eqref{FP} for some $\gamma\in(0,1)$. 
The hypoellipticity comes from the ingredients of both elliptic estimate and velocity averaging. 

Before turning to the proof of Theorem~\ref{bdd} following Moser's path, we introduce a simple but useful lemma \cite[Lemma~4.3]{HL} first. 
\begin{lemma}\label{technicallemma1}
Let $\psi(r)$ be a nonnegative bounded function on $0\le r_0\le r\le r_1$. Suppose that for $r_0\le r<s\le r_1$ we have 
$$\psi(r)\le \epsilon \psi(s)+c(s-r)^{-\iota}$$
for some $\epsilon\in[0,1)$, $\iota>0$, and $c\ge 0$. Then, there exists a positive constant $C$ depending only on $\epsilon$ and $\iota$ such that, for any $r_0\le r<s\le r_1$, we have
$$\psi(r)\le Cc(s-r)^{-\iota}.$$
\end{lemma}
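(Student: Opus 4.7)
The plan is to prove this standard iteration lemma by building a geometric sequence of points marching from $r$ toward $s$ and iterating the given inequality along it.

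First I would fix $r_0 \le r < s \le r_1$ and introduce auxiliary points $t_n$ with $t_0 = r$ and $t_{n+1} - t_n = (1-\tau)\tau^n (s-r)$, for a parameter $\tau \in (0,1)$ to be chosen later. Since $\sum_{n\ge 0}(1-\tau)\tau^n = 1$, the sequence $t_n$ stays in $[r,s)$ and converges to $s$. Applying the hypothesis to the pair $(t_n, t_{n+1})$ gives
\begin{equation*}
\psi(t_n) \le \epsilon\,\psi(t_{n+1}) + c(1-\tau)^{-\iota}\tau^{-n\iota}(s-r)^{-\iota}.
\end{equation*}
Iterating this inequality $k$ times yields
\begin{equation*}
\psi(r) = \psi(t_0) \le \epsilon^k\,\psi(t_k) + c(1-\tau)^{-\iota}(s-r)^{-\iota}\sum_{n=0}^{k-1}\bigl(\epsilon\tau^{-\iota}\bigr)^n.
\end{equation*}

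The crucial step is choosing $\tau$ so that $\epsilon\tau^{-\iota} < 1$; for instance, pick $\tau$ with $\tau^\iota = (1+\epsilon)/2$, which ensures $\epsilon\tau^{-\iota} = 2\epsilon/(1+\epsilon) < 1$. With this choice, the geometric series on the right is summable to a constant depending only on $\epsilon$ and $\iota$.

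Finally, using that $\psi$ is bounded on $[r_0,r_1]$ and $\epsilon^k \to 0$, I let $k \to \infty$ to kill the first term, arriving at $\psi(r) \le Cc(s-r)^{-\iota}$ with $C = (1-\tau)^{-\iota}\bigl(1 - \epsilon\tau^{-\iota}\bigr)^{-1}$ depending only on $\epsilon$ and $\iota$. The only mild subtlety is verifying that the choice of $\tau$ makes the geometric series converge; no other obstacle is expected since this is a purely algebraic iteration and the boundedness hypothesis on $\psi$ serves only to eliminate the leading term in the limit.
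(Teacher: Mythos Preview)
Your proof is correct and is exactly the standard iteration argument for this lemma. The paper does not supply its own proof here; it simply cites the result from \cite[Lemma~4.3]{HL}, and your argument is essentially the one found there.
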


We are now in a position to prove the local boundedness theorem. 

\begin{proof}[Proof of Theorem~\ref{bdd} (Moser's approach)]
The proof will proceed in four steps. 
	
\noindent{\textbf{Step 1.}} We will establish the local energy estimate with respect to the variable $v$; see \eqref{energy} below. 
Let $0<r<R\le 1$ and $\overline{r}:=\frac{r+R}{2}$. With $l>0$, we set $f_{l}:=f^++l$. Take a cut-off function $\phi$ satisfying $0\le\phi\le 1$, $\phi|_{Q_{\overline{r}}}\equiv 1$, ${\rm supp}\phi\subset Q_R$, as well as the following properties: 
\begin{equation}\label{enertest}
|\partial_t\phi|\le \frac{C}{(R-r)^2}, \quad
|\nabla_x\phi|\le \frac{C}{(R-r)^3}{\quad\rm and\quad}
|\nabla_v\phi|\le \frac{C}{R-r} {\quad\rm in\ } Q_1. 
\end{equation}

For clarity of purpose, we deal with bounded solutions here so that the test function of the form $\varphi=\phi^2(f_{l}^\beta-l^\beta)$ with $\beta\ge1$ is admissible; generally, one may consider $\varphi_M=\phi^2(f_{l,M}^{\beta-1}f_l-l^\beta)$, with $f_{l,M}=f_l$ if $f_l\le M$, $f_{l,M}=M$ if $f_l>M$, and send $M\rightarrow\infty$ in the end. 
Then, 

\begin{equation}\label{ener0}
\int_{Q_R}\varphi(\partial_t+b\cdot\nabla_x)f\le-\int_{Q_R}A\nabla_v\varphi\cdot\nabla_vf+\int_{Q_R}\varphi B\cdot\nabla_vf+\int_{Q_R}\varphi s. 
\end{equation}

Using the elliptic condition in \eqref{H}, as well as applying the Cauchy-Schwarz inequality, yields
\begin{equation}\label{ener1}
\begin{split}
\int_{Q_R} A\nabla_v\varphi\cdot\nabla_vf
&\ge  \beta\int_{Q_R} \phi^2f_{l}^{\beta-1}A\nabla_v f_{l}\cdot\nabla_vf_{l}+2\int_{Q_R}\phi (f_{l}^\beta-l^\beta) A\nabla_v\phi \cdot\nabla_vf_{l}\\
&\ge  \lambda\beta\int_{Q_R} \phi^2f_{l}^{\beta-1}|\nabla_vf_{l}|^2-2\Lambda\int_{Q_R}\phi f_{l}^\beta|\nabla_v\phi||\nabla_vf_{l}|\\
&\ge  \lambda\left(\beta-\frac{1}{2}\right)\int_{Q_R} \phi^2f_{l}^{\beta-1}|\nabla_vf_{l}|^2
-\frac{2\Lambda^2}{\lambda}\int_{Q_R}f_{l}^{\beta+1}|\nabla_v\phi|^2,  
\end{split}
\end{equation}
where we used the facts that $\nabla_vf_{l}=\nabla_vf$ a.e. in $\{f>0\}$ and $\varphi=0$, $\nabla_vf_{l}=0$ a.e. in $\{f\le 0\}$ so that all the integrals above are effective only over the set $\{f>0\}$. For the same reason and after integrating by parts, we have
\begin{equation}\label{ener2}
\int_{Q_R}\varphi(\partial_t+b\cdot\nabla_x)f
= \int_{Q_R} \phi^2(f_{l}^\beta-l^\beta)(\partial_t+b\cdot\nabla_x)f_{l}
\ge -4\int_{Q_R} \phi f_{l}^{\beta+1}|(\partial_t+b\cdot\nabla_x)\phi|.
\end{equation}
In addition, 
\begin{equation}\label{ener3}
\int_{Q_R}\varphi B\cdot\nabla_vf
\le  \Lambda\int_{Q_R}\phi^2f_{l}^\beta|\nabla_vf_{l}|
\le  \frac{\lambda}{4}\int_{Q_R} \phi^2f_{l}^{\beta-1}|\nabla_vf_{l}|^2
+\frac{\Lambda^2}{\lambda}\int_{Q_R}\phi^2 f_{l}^{\beta+1}. 
\end{equation}

Set $u:=f_{l}^\frac{\beta+1}{2}$ and $c_s:=\frac{|s|}{l}$ with $l:=\| s\|_{L^q(Q_1)}$. It turns out that 
\begin{equation*}
|\nabla_v u|^2\le \beta^2 f_{l}^{\beta-1}|\nabla_v f_{l}|^2\quad{\rm and}\quad \frac{|s|}{f_{l}}\le c_s.  
\end{equation*}
Therefore, combining this with \eqref{ener0}, \eqref{ener1}, \eqref{ener2}, and \eqref{ener3}, we obtain 
\begin{equation}\label{energy-grad}
\int_{Q_R}|\nabla_v(\phi u)|^2
\le C\beta\left(1+\|\partial_t\phi\|_{L^\infty}+\|\nabla_x\phi\|_{L^\infty}
+\|\nabla_v\phi\|_{L^\infty}^2\right) \int_{Q_R} u^2+C\beta\int_{Q_R}c_s\phi^2u^2. 
\end{equation}
Observing that $\| c_s\|_{L^q}\le 1$, applying H\"older's inequality and an interpolation inequality, we have
\begin{equation}\label{energy-sourse}
\begin{split}
\beta\int_{Q_R} c_s\phi^2u^2
&\le \beta\| c_s\|_{L^q}\| \phi u\|_{L^{2\kappa}}^{2\theta}\|\phi u\|_{L^2}^{2-2\theta}\\
&\le 
C(\epsilon,\theta)(R-r)^{-\frac{2\theta}{1-\theta}}\beta^\frac{1}{1-\theta}\|\phi u\|_{L^2}^2+\epsilon(R-r)^2\|\phi u\|_{L^{2\kappa}}^2, 
\end{split}
\end{equation}
where $\epsilon>0$, $\kappa\ge 1$ will be determined and $\theta=\frac{\kappa}{(\kappa-1)q}\in (0,1)$ which requires $q>\frac{\kappa}{\kappa-1}$. In this case, $C(\epsilon,\theta)$ only depends on $\epsilon$, $\kappa$ and universal constants, so we rewrite it as $C_{\epsilon,\kappa}$. 

On account of our choice of the function $\phi$ satisfying \eqref{enertest} with $0<r<R\le1$, \eqref{energy-grad} and \eqref{energy-sourse} imply that
\begin{equation}\label{energy}
\|\nabla_vu\|^2_{L^2(Q_{\overline{r}})}
\le \frac{C_{\epsilon,\kappa}\beta^\vartheta}{(R-r)^{3\vartheta}}\|u\|^2_{L^2(Q_R)}
+C\epsilon(R-r)^2\| u\|_{L^{2\kappa}(Q_R)}^2,
\end{equation}
where $\vartheta\ge 1$ is given by $\vartheta:=\frac{1}{1-\theta}$.

\medskip\noindent{\textbf{Step 2.}} Estimates of source terms and comparison. \\
In view of the energy estimate \eqref{energy} and the Sobolev inequality, we are able to gain  higher integrability of $f_{l}$ with respect to the variable $v$. Now, we are going to transfer the integrability to the variables $t$ and $x$, thanks to velocity averaging. 
Introduce another cut-off function 
$\eta$ such that $0\le\eta\le 1$, $\eta|_{Q_r}\equiv 1$, ${\rm supp}\eta\subset Q_{\overline{r}}$, and
\begin{equation*}\label{enertest0}
|\partial_t\eta|\le \frac{C}{(R-r)^2}, \quad
|\nabla_x\eta|\le \frac{C}{(R-r)^3}{,\quad\rm and\quad}
|\nabla_v\eta|\le \frac{C}{R-r} {\quad\rm in\ } Q_1. 
\end{equation*}
Then, the function $\eta u$ is a nonnegative subsolution to an equation of the type 
\begin{equation}\label{whole}
\left(\partial_t+b\cdot\nabla_x\right)h
= {\rm div}_v\left(A\nabla_v h\right)+{\rm div}_v g_1+g_0 {\quad\rm in\ } \mathbb{R}^{1+{d_1}+{d_2}},
\end{equation}
where $h(t,x,v)$ is unknown and $g_1$, $g_0$ are given by 
\begin{equation*} 
\left\{ 
\begin{aligned}
\ &g_1=-Au\nabla_v\eta,  \\
\ &g_0=(B\eta-A\nabla_v\eta)\cdot\nabla_vu+u(\partial_t+b\cdot\nabla_x)\eta +\beta c_s\eta u, 
\end{aligned}
\right. 
\end{equation*}
as we observe that $f\eta$, $g_1$, and $g_0$ are supported in $Q_{\overline{r}}$. 
Besides, we can estimate $g_1$ and $g_0$ by the local energy estimate \eqref{energy}. Indeed, we have
\begin{align}\label{g1g0} 
\| g_1\|^2_{L^2({Q_{\overline{r}}})}+\| g_0\|^2_{L^2({Q_{\overline{r}}})}
\le& C\left(\|\partial_t\eta\|_{L^\infty}^2+\|\nabla_x\eta\|^2_{L^\infty}+\|\nabla_v\eta\|_{L^\infty}^2\right) \| u\|^2_{L^2(Q_R)}\nonumber\\ 
&+C\left(1+\|\nabla_v\eta\|_{L^\infty}^2\right)\|\nabla_vu\|^2_{L^2(Q_{\overline{r}})} + \| \beta c_su \|^2_{L^2(Q_R)}\nonumber\\
\le& \frac{C}{(R-r)^6}\| u\|^2_{L^2(Q_R)}\nonumber\\
&+\frac{C}{(R-r)^2}\|\nabla_vu\|^2_{L^2(Q_{\overline{r}})}
+C_{\epsilon,\kappa}\beta^{2\vartheta}\| u\|_{L^2(Q_R)}+\epsilon\| u\|_{L^{2\kappa}(Q_R)}^2\nonumber\\
\le& \frac{C_{\epsilon,\kappa}\beta^{2\vartheta}}{(R-r)^{6\vartheta}}\| u\|^2_{L^2(Q_R)}
+C\epsilon\| u\|_{L^{2\kappa}(Q_R)}^2,
\end{align}
where we used the fact that $\beta,\vartheta\ge1$ and treated the term $\| \beta c_su \|^2_{L^2(Q_R)}$ similarly to \eqref{energy-sourse},  
\begin{equation*}
\| \beta c_su\|_{L^2(Q_R)}^2
\le \beta^2\| c_s\|_{L^q(Q_R)}^2\| u\|_{L^{2\kappa}(Q_R)}^{2\theta}\| u\|_{L^2(Q_R)}^{2-2\theta}
\le C_{\epsilon,\kappa}\beta^{\frac{2}{1-\theta}}\| u\|_{L^2(Q_R)}
+\epsilon\| u\|_{L^{2\kappa}(Q_R)}^2, 
\end{equation*}
and this requires 
\begin{equation}\label{bddindex}
q>\frac{2\kappa}{\kappa-1}.
\end{equation} 

Moreover, the solution $h$ to \eqref{whole} with the initial boundary conditions 
\begin{equation*} 
\left\{ 
\begin{aligned}
\ & h(t,x,v)=0 {\rm\ if\ } |v|={\overline{r}} {, \rm\ or\ } |x|={\overline{r}}^2, 
 {\rm\ and\ } b(v)\cdot x<0
 \\
\ & h(t_0-{\overline{r}}^2,x,v)=0
\end{aligned}
\right. 
\end{equation*}
is supported in $Q_{\overline{r}}$. By the maximum principle, we have 
\begin{equation}\label{comparison}
0\le \eta u\le h {\quad\rm in\ } Q_{\overline{r}}.
\end{equation} 

\medskip\noindent{\textbf{Step 3.}} The gain of integrability. \\
We are in a position to focus on \eqref{whole}. Since the solution $h$ is supported in $Q_{\overline{r}}$, 
 for any $T\in(-\overline{r}^2,0]$,
integrating \eqref{whole} against $h\mathbbm{1}_{t\le T}$ yields
\begin{equation*}\label{globalenergy1}
\begin{split}
\frac{1}{2}\int_{Q_{\overline{r}}}\partial_t(h^2)\mathbbm{1}_{t\le T}
\le&-\lambda\int_{Q_{\overline{r}}}|\nabla_vh|^2\mathbbm{1}_{t\le T}
-\int_{Q_{\overline{r}}}\left(g_1\cdot\nabla_vh-g_0h\right)\mathbbm{1}_{t\le T} \\
\le&-\frac{\lambda}{2}\int_{Q_{\overline{r}}}|\nabla_vh|^2\mathbbm{1}_{t\le T}
+C\| g_1\|_{L^2({Q_{\overline{r}}})}^2
+\| g_0\|_{L^2({Q_{\overline{r}}})}\| h\|_{L^2({Q_{\overline{r}}})}, 
\end{split}
\end{equation*} 
so that we are able to write the global energy estimate with respect to the velocity variable for the solution $h$,  
\begin{equation*}\label{globalenergy2}
\begin{split}
\sup_{t\in(-{\overline{r}}^2,0)}\int_{Q^t_{\overline{r}}}h^2+\lambda\int_{Q_{\overline{r}}}|\nabla_vh|^2
&\le C\| g_1\|^2_{L^2({Q_{\overline{r}}})}+C\| g_0\|_{L^2({Q_{\overline{r}}})}\left(\sup_{t\in((-{\overline{r}}^2,0)}\int_{Q^t_{\overline{r}}}h^2\right)^\frac{1}{2}\\
&\le C\left(\| g_1\|^2_{L^2({Q_{\overline{r}}})}+ \| g_0\|^2_{L^2({Q_{\overline{r}}})}\right)+\frac{1}{2}\sup_{t\in((-{\overline{r}}^2,0)}\int_{Q^t_{\overline{r}}}h^2,
\end{split}
\end{equation*} 
where we denote $Q_{\overline{r}}^t:=\{(x,v):(t,x,v)\in Q_{\overline{r}}\}$. 
In particular, recalling that $0<r<R\le 1$ and $\overline{r}=\frac{r+R}{2}$, we get
\begin{equation*}
\| h\|^2_{L^2_{t,x}H_v^1({Q_{\overline{r}}})}\lesssim \| g_1\|^2_{L^2({Q_{\overline{r}}})}+\| g_0\|^2_{L^2({Q_{\overline{r}}})}. 
\end{equation*}
Then, applying Lemma~\ref{VAL} (with $\delta=1$ and $\gamma=\frac{2\alpha}{(4+\alpha)(4+d_2)}$) to \eqref{whole} with $h$ supported in $Q_{\overline{r}}$, we have 
\begin{equation*}
\| h\|_{H^\gamma(Q_r)}
\lesssim \| h\|_{L_{t,x}^2H_v^1({Q_{\overline{r}}})}+\| g_1\|_{L^2({Q_{\overline{r}}})}+\| g_0\|_{L^2({Q_{\overline{r}}})}
\lesssim \| g_1\|_{L^2({Q_{\overline{r}}})}+\| g_0\|_{L^2({Q_{\overline{r}}})}. 
\end{equation*}
Combining this with the estimate \eqref{g1g0} for $\| g_1\|_{L^2}+\| g_0\|_{L^2}$ yields
\begin{equation*}
\| h\|^2_{H^\gamma(Q_r)}
\le \frac{C_{\epsilon,\kappa}\beta^{2\vartheta}}{(R-r)^{6\vartheta}}\| u\|^2_{L^2(Q_R)}
+C\epsilon\| u\|_{L^{2\kappa}(Q_R)}^2. 
\end{equation*}
Then, the Sobolev inequality $\| h\|_{L^{2\kappa}(Q_r)}\lesssim\| h\|_{H^\gamma(Q_r)}$ and the fact \eqref{comparison} imply higher integrability of $f_{l}$. 
To be more specific, recalling that $$u=f_{l}^{\frac{p}{2}} {\quad\rm with\quad} p:=\beta+1\ge 2,$$ and rewriting the result in terms of $f_{l}$ gives 
\begin{equation*}
\| f_{l}\|_{L^{\kappa p}(Q_r)}\le \| h\|_{L^{2\kappa}(Q_r)}^{\frac{2}{p}}
\le \left(\frac{C}{(R-r)^{6\vartheta}}\right)^{\frac{1}{p}}\| f_{l}\|_{L^p(Q_R)}
+\frac{1}{2}\| f_{l}\|_{L^{\kappa p}(Q_R)}, 
\end{equation*}
where we set the positive constant $\epsilon$ to be small enough and $\kappa>1$ is given by the Sobolev conjugate 
\begin{equation}\label{sobolevconjugate}
\frac{1}{2\kappa}=\frac{1}{2}-\frac{\gamma}{1+{d_1}+{d_2}}. 
\end{equation} 
Thanks to Lemma~\ref{technicallemma1}, we achieve
\begin{equation*}
\| f_{l}\|_{L^{\kappa p}(Q_r)}
\le \left(\frac{C}{(R-r)^{6\vartheta}}\right)^{\frac{1}{p}}\| f_{l}\|_{L^p(Q_R)}.
\end{equation*}

\medskip\noindent{\textbf{Step 4.}} The iteration. \\
We finally construct an iteration by taking $r_n=\frac{1}{2}+\frac{1}{2^{n+1}}$ and $p_n=2\kappa^n$ with $n\in\mathbb{N}$ so that 
\begin{equation*}
\| f_{l}\|_{L^{p_{n+1}}(Q_{r_{n+1}})}
\le C^{\frac{n}{p_n}}\| f_{l}\|_{L^{p_n}(Q_{r_n})}
\le C^{\sum_{k\in\mathbb{N}}\frac{k}{p_k}}\| f_{l}\|_{L^{p_0}(Q_{r_0})}. 
\end{equation*}
Note that the series $\sum_{k\in\mathbb{N}}\frac{k}{p_k}$ converges. Sending $n\rightarrow\infty$ and recalling that $l=\| s\|_{L^q(Q_1)}$, we derive 
\begin{equation*}
\| f^+\|_{L^\infty(Q_\frac{1}{2})}
\le C\left(\| f^+\|_{L^{2}(Q_1)}+\| s\|_{L^{q}(Q_1)}\right).
\end{equation*}
The proof now is complete. 
\end{proof}

\subsection{De Giorgi's approach}
\begin{proof}[De Giorgi's approach to Theorem~\ref{bdd}]
	We will pick two positive sequences $k_n\rightarrow k_\infty$, $r_n\rightarrow r_\infty$ with $0<k_\infty,r_\infty<\infty$ and establish that $$\mathcal{A}_n:=\|(f-k_n)^+\|_{L^2(Q_{r_n})}\rightarrow0$$
	by proving a type of inequality $$\mathcal{A}_n\le C^n\mathcal{A}_{n-1}^{1+\epsilon}$$
	for some positive universal constant $\epsilon$; see \eqref{degiorgiinduction0} below for the precise expression. 
	The proof is split into four steps. 
	
\medskip\noindent{\textbf{Step 1.}}	 The energy estimate. \\ 
Let $0<r<R\le1$ and $\overline{r}:=\frac{r+R}{2}$. 
As before, we consider again a cut-off function $\phi$ supported in $Q_R$, valued in $[0,1]$, $\phi|_{Q_{\overline{r}}}\equiv 1$, and
\begin{equation*}
|\partial_t\phi|\le \frac{C}{(R-r)^2}, \quad
|\nabla_x\phi|\le \frac{C}{(R-r)^3}, \quad
|\nabla_v\phi|\le \frac{C}{R-r} {\quad\rm in\ } Q_1. 
\end{equation*}
We will write the energy estimate by choosing the test function $\varphi:=\phi^2f_{k}$, where we define $f_k:=(f-k)^+$ for $k>0$.
	
Similarly to \eqref{ener1}, \eqref{ener2}, and \eqref{ener3}, it easily turns out that
	\begin{equation*}
	\begin{split}
	&\int_{Q_R} A\nabla_v\varphi\cdot\nabla_vf \ge\frac{\lambda}{2}\int_{Q_R}\phi^2|\nabla_vf_{k}|^2-\frac{2\Lambda^2}{\lambda}\int_{Q_R}f_{k}^2|\nabla_v\phi|^2, \\
	&\int_{Q_R} \varphi(\partial_t+b\cdot\nabla_x)f \ge-\int_{Q_R} \phi f_{k}^2|(\partial_t+b\cdot\nabla_x)\phi|,\\
	&\int_{Q_R} \varphi B\cdot\nabla_vf \le\frac{\lambda}{4}\int_{Q_R}\phi^2|\nabla_vf_{k}|^2+\frac{\Lambda^2}{\lambda}\int_{Q_R}\phi^2 f_{k}^2. 
	\end{split}
	\end{equation*}
Combining with the estimate for the source term
	\begin{equation*}
	\int_{Q_R}s\varphi\le
	\| f_k\|_{L^2(Q_R)} \| s\|_{L^{q}(Q_R)}|\{f_k>0\}\cap Q_R|^{\frac{1}{2}-\frac{1}{q}},
	\end{equation*}
	we conclude that
\begin{align}\label{degioigi-ener}
\|\nabla_v(\phi f_k)\|_{L^2(Q_R)}^2
\lesssim& \left(1+\|\partial_t\phi\|_{L^\infty}+\|\nabla_x\phi\|_{L^\infty}+\|\nabla_v\phi\|_{L^\infty}^2\right)
      \|f_k\|_{L^2({Q_R})}^2\nonumber\\
     &+ \| s\|_{L^{q}(Q_R)}^2|\{f_k>0\}\cap Q_R|^{1-\frac{2}{q}}\nonumber\\
\lesssim& \frac{1}{(R-r)^3}\|f_k\|_{L^2({Q_R})}^2+\| s\|_{L^{q}(Q_R)}^2|\{f_k>0\}\cap Q_R|^{1-\frac{2}{q}}. 
\end{align}

\medskip\noindent{\textbf{Step 2.}} The gain of integrability. \\
The technique involved in this step is almost the same as the ones used in the previous proof but here $\beta=1$. 
Taking a cut-off function $\eta$ valued in $[0,1]$, supported in $Q_{\overline{r}}$, $\eta|_{Q_{r}}\equiv 1$, and
\begin{equation*}
|\partial_t\eta|\le \frac{C}{(R-r)^2}, \quad
|\nabla_x\eta|\le \frac{C}{(R-r)^3}, \quad
|\nabla_v\eta|\le \frac{C}{R-r} {\quad\rm in\ } Q_1, 
\end{equation*}
we know that $\eta f_k$ is a subsolution to \eqref{whole}, where $g_1$, $g_0$ are replaced by 
\begin{equation*} 
\left\{ 
\begin{aligned}
\ &g_1=-Af_k\nabla_v\eta,  \\
\ &g_0=(B\eta-A\nabla_v\eta)\cdot\nabla_vf_k+f_k(\partial_t+b\cdot\nabla_x)\eta +s\mathbbm{1}_{f_k\ge 0}. 
\end{aligned}
\right. 
\end{equation*}
With the help of \eqref{degioigi-ener}, $g_1$ and $g_0$ can be estimated as follows: 
\begin{align*}
\| g_1\|^2_{L^2({Q_{\overline{r}}})}+\| g_0\|^2_{L^2({Q_{\overline{r}}})}
\lesssim& \left(\|\partial_t\eta\|_{L^\infty}^2+\|\nabla_x\eta\|^2_{L^\infty}+\|\nabla_v\eta\|_{L^\infty}^2\right) \| f_k\|^2_{L^2(Q_R)}\\ 
&+\left(1+\|\nabla_v\eta\|_{L^\infty}^2\right)\|\nabla_vf_k\|^2_{L^2(Q_{\overline{r}})} +\| s\|^2_{L^2(Q_R)}|\{f_k>0\}\cap Q_R|^{1-\frac{2}{q}}\\
\lesssim& \frac{1}{(R-r)^{6}}\| f_k\|^2_{L^2(Q_R)}
+\| s\|_{L^q(Q_R)}^2 |\{f_k>0\}\cap Q_R|^{1-\frac{2}{q}}. 
\end{align*}
By the same subsequent argument as in the previous proof, 
we can deduce that there is some barrier function $h\in H^\gamma(Q_r)$ dominating $f_k$ in $Q_r$ and satisfying 
\begin{equation*}\label{energydegiorgi0}
\| h\|_{H^\gamma(Q_r)}\lesssim \| g_1\|_{L^2({Q_{\overline{r}}})}+\| g_0\|_{L^2({Q_{\overline{r}}})}
\lesssim \frac{1}{(R-r)^3}\| f_{k}\|_{L^2(Q_R)}
+\| s\|_{L^q(Q_R)}|\{f_k>0\}\cap Q_R|^{\frac{1}{2}-\frac{1}{q}},
\end{equation*}
where $\gamma=\frac{2\alpha}{(4+\alpha)(4+d_2)}$. 
Then, recalling $0\le f_k\le h$ in $Q_r$ and the Sobolev inequality, we have
\begin{equation}\label{energydegiorgi}
\| f_{k}\|_{L^{2\kappa}(Q_{r})}\le \| h\|_{L^{2\kappa}(Q_{r})}
\lesssim \frac{1}{(R-r)^3}\| f_{k}\|_{L^2(Q_R)}
+\| s\|_{L^q(Q_R)}|\{f_k>0\}\cap Q_R|^{\frac{1}{2}-\frac{1}{q}},
\end{equation}
where $\kappa>1$ is given by \eqref{sobolevconjugate}. 

\medskip\noindent{\textbf{Step 3.}} Estimates of the superlevel sets. \\
By H\"{o}lder's inequality, 
	\begin{equation}\label{degio0}
	\int_{Q_{r}} f_k^2\le \left(\int_{Q_{r}} f_k^{2\kappa}\right)^{\frac{1}{\kappa}}|\{f_k>0\}\cap Q_R|^{1-\frac{1}{\kappa}}. 
	\end{equation}
Recall that \eqref{bddindex} is equivalent to $2-\frac{2}{q}-\frac{1}{\kappa}>1$. 
Then, \eqref{energydegiorgi} and \eqref{degio0} imply
\begin{align}\label{degio1}
	\int_{Q_r}f_k^2
	&\lesssim \frac{1}{(R-r)^6}|\{f_k>0\}\cap Q_R|^{1-\frac{1}{\kappa}} \int_{Q_R}f_k^2 
	+ |\{f_k>0\}\cap Q_R|^{2-\frac{2}{q}-\frac{1}{\kappa}}\| s\|_{L^q(Q_R)}^2\nonumber\\
	&\lesssim \frac{1}{(R-r)^6}|\{f_k>0\}\cap Q_R|^\epsilon \int_{Q_R}f_k^2 
	+ |\{f_k>0\}\cap Q_R|^{1+\epsilon}\| s\|_{L^q(Q_R)}^2,
\end{align}
	for some (small) universal constant $\epsilon>0$, provided that $|\{f_k>0\}\cap Q_R|\le1$. Indeed, due to Chebyshev's inequality, for any $j<k$, 
	\begin{equation}\label{degio2}
	|\{f_k>0\}\cap Q_R|=|\{f-j>k-j\}\cap Q_R|\le \frac{1}{(k-j)^2}\int_{Q_R}f_j^2.
	\end{equation}
	In particular, $|\{f_k>0\}\cap Q_R|\le1$ if $k\ge l_0:=C\| f^+\|_{L^2(Q_1)}$ for some (large) universal constant $C>0$. 
	Besides, for any $j<k$, we have $0\le f_k\le f_j$.
Combining this with \eqref{degio1} and \eqref{degio2}, we conclude that for any $k>j>l_0$,
\begin{equation*}
\|f_k\|_{L^2(Q_r)}
\lesssim \left(\frac{1}{(R-r)^3}+\frac{\| s\|_{L^q(Q_1)}}{k-j}\right)\frac{\|f_j\|_{L^2(Q_R)}^{1+\epsilon}}{(k-j)^\epsilon}.
\end{equation*}
	
\medskip\noindent{\textbf{Step 4.}} The recursion. \\
For some $l>0$, taking $k_n=l_0+l(1-\frac{1}{2^n})$, $r_n=\frac{1}{2}+\frac{1}{2^{n+1}}$ 
and recalling $\mathcal{A}_n=\|f_{k_n}\|_{L^2(Q_{r_n})}$, we derive 
	\begin{equation}\label{degiorgiinduction0}
\mathcal{A}_n \le \frac{C^n(l+\| s\|_{L^q(Q_1)})}{l^{1+\epsilon}}\mathcal{A}_{n-1}^{1+\epsilon}.
	\end{equation} 
It is not hard to show that there exists some constant $m>1$ such that for any $n\in\mathbb{N}$, 
	\begin{equation}\label{degiorgiinduction}
\mathcal{A}_n \le \frac{\mathcal{A}_0}{m^n}. 
	\end{equation} 
Indeed, \eqref{degiorgiinduction} holds for $n=0$ obviously. For any $n>1$, suppose that \eqref{degiorgiinduction} holds for $n-1$. Then, applying \eqref{degiorgiinduction0}, we have
\begin{equation*}
	\mathcal{A}_n
	\le\frac{C^n(l+\| s\|_{L^q(Q_1)})}{l^{1+\epsilon}}\left(\frac{\mathcal{A}_0}{m^{n-1}}\right)^{1+\epsilon}
	=\frac{C^n}{m^{\epsilon n-\epsilon-1}}\cdot\frac{l+\| s\|_{L^q(Q_1)}}{l}\cdot\frac{\mathcal{A}_0^\epsilon}{l^\epsilon}\cdot\frac{\mathcal{A}_0}{m^n},
\end{equation*} 
	which completes the induction by choosing $m^\epsilon=C$ and $l=C_0(\| s\|_{L^q(Q_1)}+\mathcal{A}_0)$ with some (large) universal constant $C_0>0$.
	
Finally, sending $n\rightarrow\infty$ in \eqref{degiorgiinduction}, we have 
$\mathcal{A}_\infty=\|f_{k_\infty}\|_{L^2(Q_{r_\infty})}=0$ with $k_\infty=l_0+l$ and $r_\infty=\frac{1}{2}$. 
Recalling the definition of $l_0=C\|f^+\|_{L^2(Q_1)}$, we get the desired result. 
\end{proof}

\begin{remark}
Let us consider the typical case that $d_1=d_2=d$, $\alpha=1$. 
If we additionally strengthen our assumption so that $\|b\|_{W^{1,\infty}}$ is bounded, then Remark~\ref{rkVAL} (with $\delta=1$) implies that $\gamma=\frac{1}{1+2d}$. 
From \eqref{bddindex} and \eqref{sobolevconjugate}, we then know that the range of the index $q$ is given by $q>(1+2d)^2$. 
We will use this range below as the assumptions above are all fulfilled in the next section. 
This makes a difference in the range of $q$ in Theorems~\ref{bdd} and \ref{regularity}. 
\end{remark}

\section{H\"older continuity}\label{sectionholder}
Passing from the local boundedness result in the previous section to the H\"older continuity of solutions to \eqref{FP} is based on De Giorgi's intermediate value lemma, which is a parabolic counterpart of De Giorgi's second lemma (see \cite{CV} and \cite{Va}, for instance). In the classical elliptic case, De Giorgi's originial lemma in \cite{DG}, relying on an ingredient of isoperimetric inequality, indicates that any function lying in $H^1$ should pay enough measure between two distinct values to jump from one to the other. 
This property does not hold anymore for $H^s$-function for any $s<\frac{1}{2}$, since any characteristic functions of bounded smooth domains lie there. 

In the hypoelliptic case or even parabolic case, such a result is neither valid for general $L^\infty_tH^1_v$ functions nor valid for subsolutions in any cylinder domains of definition, 
since subsolutions may drop in time right off, such as the function defined by taking identically one in the first half time and taking identically zero in the second half time. 
Instead, there should exist a time lag when the subsolutions pay in measure to be allowed to jump (see its precise statement in Lemma~\ref{lemmaDG} below). 
Furthermore, it is related to the propagation of positivity and zeros of solutions to \eqref{FP}; see \cite{GIMV}. The nondegeneracy of the term $b$ guarantees that the propagation is possible. As a matter of fact, how a nondegenerate operator propagates minima can be traced back to Bony’s maximum principle \cite{Bony}. 

Throughout this section, we assume $d_1=d_2=d$.

\subsection{Zooming in the equation}\label{zoom}
To establish the regularity result (Theorem~\ref{regularity}), we need to pay attention to the increment of solutions to \eqref{FP} in the infinitesimal neighborhood of each point. Consequently, a zooming procedure is necessary. 
Observing that the kinetic Fokker-Planck equation \eqref{FP} is not translation invariant in the velocity variable, we have to consider the transformation $\mathcal{T}_{z_0,r}$ by the prescript 
\begin{equation*}\label{zoomtrans}
\mathcal{T}_{z_0,r}:(\tilde{t},\tilde{x},\tilde{v})\longmapsto(t,x,v):=(t_0+r^2\tilde{t},x_0+r^3\tilde{x}+r^2\tilde{t}b(v_0),v_0+r\tilde{v}), 
\end{equation*}
which is the composition of scaling and translation. 
To zoom in on the equation, with $r\in(0,1)$, we define 
$$\tilde{f}:=f\circ\mathcal{T}_{z_0,r}.$$
Then, as soon as $f$ verifies \eqref{FP} in the domain 
\begin{equation}\label{domain}
Q^b_r(z_0):=\left\{(t,x,v):t\in(t_0-r^2,t_0],\ |x-x_0-(t-t_0)b(v_0)|<r^3,\ |v-v_0|<r\right\},
\end{equation}
$\tilde{f}$ is a solution to the equation in $Q_1$, 
\begin{equation}\label{FPzoom}
(\partial_{\tilde{t}}+b_r\cdot\nabla_{\tilde{x}})\tilde{f}={\rm div}_{\tilde{v}}\big(\tilde{A}\nabla_{\tilde{v}}\tilde{f}\big)+\tilde{B}\cdot\nabla_{\tilde{v}}\tilde{f}+\tilde{s},
\end{equation} 
where the new coefficients are defined by 
\begin{equation}\label{coefficient}
b_r(\tilde{v}):=\frac{1}{r}[b(v_0+r\tilde{v})-b(v_0)],
 \quad \tilde{A}:=A\circ\mathcal{T}_{z_0,r},\quad \tilde{B}:=rB\circ\mathcal{T}_{z_0,r}, 
 {\quad \rm and\quad } \tilde{s}:=r^2s\circ\mathcal{T}_{z_0,r}.
\end{equation}

Recalling the assumption in this section that $b\in C^1(B_1)$ and $\| Db\|_{L^\infty(B_1)}\le\Lambda$, we know that for any $B_{r}(v_0)\subset B_1$, 
\begin{equation*}
\| b_r\|_{W^{1,\infty}(B_1)}
\le 2\| Db\|_{L^\infty(B_r(v_0))}
\le 2\Lambda.
\end{equation*}
We point out that the new coefficients $\tilde{A},\tilde{B},$ and $b_r$ still satisfy the conditions \eqref{H} and \eqref{nond} with $\alpha=1$ in $Q_1$, as long as ${A},{B},$ and $b$ satisfy \eqref{H} and \eqref{N}. 
We will focus on this rescaled equation \eqref{FPzoom} from now on.

\subsection{Invertibility of $Db$}\label{invertibility}
Recall Hörmander's theorem \cite[Theorem~1.1]{Ho} for the hypoelliptic operators as mentioned in \S\ref{hypoelliptic}. If the system of vector fields is defined by 
\begin{equation*}
X_0:=\partial_t+b(v)\cdot\nabla_x 
{\quad \rm and\quad}
(X_1,X_2,\ldots ,X_{d})^T:=\sqrt{A}\nabla_v,
\end{equation*} 
 where the matrix $A(t,x,v)$ is uniformly positive-definite as in our assumption \eqref{H}, then
$$\sum_{i=1}^d X_i^*X_i+X_0=\partial_t+b\cdot\nabla_x-{\rm div}_v(A\nabla_v\cdot).$$
The vector fields $\{X_i\}_{i=0}^d$ together with their first order Lie brackets generate the full tangent space at each point of $Q_1$ if and only if the matrix $Db$ is invertible everywhere in $B_1$. This fact motivates us to extract some information on $Db$ from our nondegeneracy assumption \eqref{N}. 

We characterize the invertibility of $Db$ quantitatively thanks to the following lemma. 
\begin{lemma}\label{nondlemma}
If $b\in C^1(B_1,\mathbb{R}^d)$ is nondegenerate in the sense of \eqref{N}, then the spectral radius of the $d\times d$ matrix $(Db)^{-1}$ has some (positive) universal upper bound. 
Conversely, if the spectral radius of $(Db)^{-1}$ equals to $\kappa$ (positive) in $B_1$, then $b$ satisfies \eqref{N} in $B_\frac{1}{2}$ for some $K>0$ only depending on $d$, $\kappa$, and $\|Db\|_{L^\infty}$. 
\end{lemma}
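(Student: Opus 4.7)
The lemma has two directions dual in spirit. For the forward implication I would extract pointwise information on $Db(v_0)$ from \eqref{N} via Taylor expansion at $v_0$ and a scaling limit. For the converse I would exploit the uniform lower bound $|Db(v)^{T}\nu|\ge 1/\kappa$ by a small-scale change of variables combined with a large-scale covering.

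\textbf{Forward direction.} Fix $v_0\in B_1$ and $\nu\in\mathbb{S}^{d-1}$, and set $g(v):=b(v)\cdot\nu$, $\rho:=|\nabla g(v_0)|=|Db(v_0)^{T}\nu|$. Taylor's theorem gives
$$g(v_0+rh)-g(v_0)=r\,\nabla g(v_0)\cdot h+r\,\varepsilon(r,h),\qquad h\in B_1,$$
with $\varepsilon(r,h)\to 0$ uniformly in $h$ as $r\to 0$. Put $\mu:=-g(v_0)$. If $\rho=0$, choosing $\epsilon:=r\sup_{h}|\varepsilon(r,h)|$ makes $|\mu+g(v)|\le\epsilon$ on all of $B_r(v_0)$, and \eqref{N} forces $c_d r^{d}\le K r^{d-1}\epsilon=o(r^{d})$ as $r\to 0$, a contradiction. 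If $\rho>0$, choosing $\epsilon:=r\rho/4$ and restricting $h$ to the slab $\{h\in B_1:|h\cdot\nabla g(v_0)|\le\rho/8\}$, whose measure is a universal positive constant $c_1$, yields $|\{v\in B_r(v_0):|\mu+g(v)|\le\epsilon\}|\ge c_1 r^{d}$ for small $r$, and \eqref{N} then forces $\rho\ge 4c_1/K$. Varying $\nu$ shows that every singular value of $Db(v_0)$ is at least $4c_1/K$, so $\|(Db(v_0))^{-1}\|\le K/(4c_1)$.

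\textbf{Converse direction.} Now $\|(Db(v))^{-1}\|\le\kappa$ on $B_1$, equivalently $|Db(v)u|\ge|u|/\kappa$ for all $v,u$; write $L:=\|Db\|_{L^\infty}$. I would split into small- and large-scale regimes. At small scales, pick $r_0>0$ so that the uniform modulus of continuity of $Db$ on $\overline{B_{1/2}}$ satisfies $\omega_{Db}(r_0)\le 1/(2\kappa)$. For $v_0\in B_{1/2}$ and $v_1,v_2\in B_{r_0}(v_0)$, writing $b(v_1)-b(v_2)=\int_{0}^{1}Db(v_2+s(v_1-v_2))(v_1-v_2)\,ds$ and freezing $Db$ at $v_0$ gives
$$|b(v_1)-b(v_2)|\ge\Bigl(\tfrac{1}{\kappa}-\omega_{Db}(r_0)\Bigr)|v_1-v_2|\ge\tfrac{|v_1-v_2|}{2\kappa},$$
so $b|_{B_{r_0}(v_0)}$ is an injective bi-Lipschitz map into $B_{Lr_0}(b(v_0))$. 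For $B_r(v_0)\subset B_{r_0}(v_0)$, changing variables $u=b(v)$ and using $|\det Db|\ge\kappa^{-d}$ gives
$$|E|\le\kappa^{d}|b(E)|\le\kappa^{d}\,|\{u\in B_{Lr}(b(v_0)):|\mu+u\cdot\nu|\le\epsilon\}|\le K_0\,r^{d-1}\epsilon,$$
with $K_0=K_0(d,\kappa,L)$, since the final slab has volume at most $c_d L^{d-1}r^{d-1}\cdot 2\epsilon$. At large scales $r_0<r\le 1/4$ (the upper bound coming from $B_{2r}(v_0)\subset B_1$ with $v_0\in B_{1/2}$), cover $B_r(v_0)$ by $N\le C_d(r/r_0)^{d}$ balls of radius $r_0/2$, apply the small-scale bound to each, and sum; using $r\le 1/4$ absorbs the extra factor of $r/r_0$ into the constant to give $|E|\le K r^{d-1}\epsilon$.

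\textbf{Main obstacle.} The delicate point of the converse is the implicit dependence on the modulus of continuity of $Db$: the injectivity threshold $r_0$ above, and hence $K$, depends on $\omega_{Db}$, which is absent from the dependency list $K=K(d,\kappa,\|Db\|_{L^\infty})$ in the lemma. To honour that list literally one would have to replace the change-of-variables argument by a coarea estimate
$$|E|\le\kappa\!\int_{|\mu+t|\le\epsilon}|g^{-1}(t)\cap B_r(v_0)|_{d-1}\,dt,$$
coupled with a scale-invariant bound $|g^{-1}(t)\cap B_r(v_0)|_{d-1}\le C(d,\kappa,L)r^{d-1}$ obtained purely from $1/\kappa\le|\nabla g|\le L$; producing such uniform level-set area control from merely bilateral pointwise gradient bounds is the sticking point. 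Otherwise $K$ silently absorbs $\omega_{Db}$, consistent with the appearance of $o_1$ in the dependency list of Theorem~\ref{regularity}.
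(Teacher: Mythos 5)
Your proof follows the same route as the paper in both directions. For the forward implication the paper also plugs $\mu=-b(v_0)\cdot\nu$ into \eqref{N} on $B_r(v_0)$, rescales, and sends $r\to 0^+$ (via dominated convergence rather than your explicit Taylor/slab computation) to obtain $\big|\{v\in B_1:|\langle v,Db(v_0)^T\nu\rangle|\le\epsilon\}\big|\le K\epsilon$, from which invertibility and the bound $\|Db(v_0)^{-1}\|\le C(d)K$ follow by taking $\nu$ in the minimal singular direction and using the rotation invariance of the ball; your slab argument proves the same thing slightly more elementarily. For the converse the paper, exactly like you, reduces to balls ``where the inverse mapping theorem can be applied,'' pulls the level set back under $w=b(v)-b(v_0)$, bounds its measure by $\sup\det(Db^{-1})\cdot|\{w\in B_{\|Db\|_{L^\infty}r}:|\mu'+w\cdot\nu|\le\epsilon\}|\le C(d)\kappa^{d}\|Db\|_{L^\infty}^{d-1}r^{d-1}\epsilon$, and invokes ``a covering argument'' for larger $r$.

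Your ``main obstacle'' paragraph identifies a real soft spot in the paper's own write-up rather than a flaw specific to your argument: the paper never supplies the injectivity radius $r_0$ needed for the change of variables, and the covering step multiplies the constant by a factor of order $(r/r_0)$, so the proof as written yields $K$ depending on $r_0$ and hence on the modulus of continuity of $Db$ (or else on some global-inverse-function input that is not invoked). The paper's stated dependency list $K=K(d,\kappa,\|Db\|_{L^\infty})$ is therefore not fully justified by the argument it gives, and your diagnosis is accurate. This does not undermine the paper's main theorems: wherever Lemma~\ref{nondlemma} is used, the modulus $o_1$ already appears among the admissible dependencies (e.g., in Theorem~\ref{regularity} and Lemma~\ref{lemmaDG}), so absorbing $o_1$ into $K$ is harmless there. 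Your coarea alternative would indeed close the gap if one had the uniform level-set bound $\mathcal{H}^{d-1}(g^{-1}(t)\cap B_r)\le C(d,\kappa,\|Db\|_{L^\infty})\,r^{d-1}$, but you are right that two-sided pointwise gradient bounds alone do not obviously give this, so the honest fix is simply to add $o_1$ to the dependency list of the lemma.
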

\begin{proof}
In view of the condition \eqref{N} with the ball $B=B_r(v_0)$, as well as changing variables and selecting $\mu:=b(v_0)\cdot\nu$, we deduce
\begin{equation*}
\forall \nu\in\mathbb{S}^{d-1}, \ \forall \epsilon>0,\ \  
\bigg|\Big\{v\in B_1:\Big|\frac{1}{r}[b(v_0+rv)-b(v_0)]\cdot\nu\Big|\le \epsilon\Big\}\bigg| 
\le K\epsilon.
\end{equation*}
Since $b$ is differentiable in $B_1$, for any $v_0\in B_1$, by the dominated convergence theorem, we can send $r\rightarrow 0^+$ to see that
\begin{equation*}
\forall \nu\in\mathbb{S}^{d-1}, \ \forall \epsilon>0,\ \  
\big|\{v\in B_1:|\langle v,Db(v_0)^T\nu\rangle|\le \epsilon\}\big| 
\le K\epsilon.
\end{equation*}
Then, sending $\epsilon\rightarrow 0^+$ implies that for any $\nu\in\mathbb{S}^{d-1}$ and for a.e. $v\in B_1$, $\langle v,Db(v_0)^T\nu\rangle\neq0$, which shows that $Db(v_0)$ is invertible. 
Additionally, by symmetry of the ball, for a fixed  $\eta\in\mathbb{S}^{d-1}$ and for any $\epsilon>0$, we have 
\begin{equation*}
\big|\{v\in B_1: |\langle v,\eta\rangle|\le\| Db(v_0)^{-1}\|\epsilon\}\big|=
\bigg|\Big\{v\in B_1: \inf_{\nu\in\mathbb{S}^{d-1}} |Db(v_0)\nu|
|\langle v,\eta\rangle|\le \epsilon\Big\}\bigg| 
\le K\epsilon, 
\end{equation*}
which means 
\begin{equation*}
\| Db(v_0)^{-1}\|\le  C(d)K, 
\end{equation*}
where the constant $C(d)$ only depends on $d$.

Conversely, since $Db$ is invertible in $B_1$, based on a covering argument, it suffices to consider \eqref{N} in the ball $B_r(v_0)$  where the inverse mapping theorem can be applied on the $b$. 
Then, by changing variables, for any $\mu\in\mathbb{R}$, $\nu\in\mathbb{S}^{{d}-1}$, and $\epsilon>0$, 
\begin{align*}
\big|\{ v\in B_r(v_0): | \mu+b(v)\cdot\nu|\le \epsilon\}\big| 
&\le \sup\nolimits_{B_1}\det(Db^{-1})\int_{w+b(v_0)\in b\left(B_r(v_0)\right)} \mathbbm{1}_{| \mu+w \cdot\nu|\le \epsilon} \dif w \\
&\le C(d) \kappa^{d}\|Db\|_{L^\infty}^{d-1} r^{{d}-1}\epsilon. 
\end{align*}
This completes the proof. 
\end{proof}

\subsection{Intermediate value lemma}
Although changing variables is available on account of Lemma~\ref{nondlemma}, reformulating \eqref{FPzoom} in a way such that the transport part of the equation becomes a classical one $\partial_t+v\cdot\nabla_x$ is still impossible. 
Therefore, De Giorgi's intermediate value lemma for the free streaming case in \cite{GIMV} cannot be applied here directly. 

We are going to establish the counterpart of the intermediate value lemma by rephrasing the compactness argument presented in \cite[Lemma~4.1]{GIMV} additionally with a linearization technique. 
Roughly speaking, arguing by contradiction produces a sequence of subsolutions to the rescaled equation \eqref{FPzoom}. 
The contradiction will be followed in four steps. 
First, passing to a limit is possible due to certain compactness from the elliptic energy estimate and the averaging lemma. 
Second, the limit function will be independent of the velocity variable because of the elliptic energy estimate and the isoperimetric inequality of $H^1$-functions. 
Third, we are able to linearize the velocity vector field in the drift term of the limit inequation if the zoom scale $r^{-1}$ is large enough. 
Fourth, the propagation of the zeros of the limit function will lead to a contradiction. 

Before stating the lemma, we simplify some notation as follows. 
For some positive constants $\omega$ and $s_0$, we use the abbreviation $Q_\omega:=(-\omega^2,0]\times B_{\omega^3}\times B_\omega$ and $Q^-_\omega:=Q_\omega-(s_0,0,0)$. 
We will determine $\omega$ and $s_0$ to be universal and such that $Q_\omega$ and $Q^-_\omega$ do not overlap; indeed, the time lag $s_0-\omega^2$ between $Q_\omega$ and $Q^-_\omega$ is bounded from below by some universal positive constant. 

\begin{lemma}[De Giorgi's intermediate value lemma]\label{lemmaDG}
	Let $\delta_1,\delta_2\in (0,1)$ be universal and $f$ be a subsolution to \eqref{FPzoom} with $\tilde{A},\tilde{B}$
	satisfying \eqref{H} in $Q_1$. 
	Assume $b\in C^1(B_1)$, with $o_1$ as the modulus of continuity of $Db$, is nondegenerate in $B_1$ in the sense of \eqref{N}, and in addition, 
	$$\| Db\|_{L^\infty(B_1)}\le\Lambda {\quad and\quad} \|\tilde{s}\|_{L^q(Q_1)}\le\Lambda,$$ 
	for some $q>(1+2d)^2$.  
	There exist some (small) universal positive constants $\omega,s_0,r_0,\delta_0$, and $\theta$ such that 
	$Q_\omega\cup Q^-_\omega\subset Q_\frac{1}{2}$ and for any $r\in(0,r_0]$, if $f\le1$ in $Q_1$ satisfies
	\begin{equation*}
	\left\{ 
	\begin{aligned}
	\ &|\{f\ge1-\theta\}\cap Q_\omega|\ge\delta_1|Q_\omega|,\\
	\ &|\{f\le0\}\cap Q^-_\omega|\ge\delta_2|Q^-_\omega|,
	\end{aligned}
	\right. 
	\end{equation*} 
	then we have
	$$|\{0<f<1-\theta\}\cap Q_\frac{1}{2}|\ge\delta_0.$$
\end{lemma}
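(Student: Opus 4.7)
The plan is a compactness-contradiction argument in the spirit of \cite[Lemma~4.1]{GIMV}, adapted to general nondegenerate $b$. Fix universal parameters $\omega,s_0,r_0,\theta$ with $s_0>\omega^2$ and $Q_\omega\cup Q_\omega^-\subset Q_{1/2}$. If the conclusion fails, there exist sequences $r_n\in(0,r_0]$, $v_{0,n}\in B_{1/2}$, coefficients $b_n,\tilde A_n,\tilde B_n,\tilde s_n$ satisfying the uniform bounds, and subsolutions $f_n\le 1$ to \eqref{FPzoom} in $Q_1$ verifying both measure hypotheses yet $|\{0<f_n<1-\theta\}\cap Q_{1/2}|\to 0$. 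Set $g_n:=\min\{\max\{f_n,0\},1-\theta\}$; by Lemma~\ref{subsol}, $g_n$ is a subsolution of \eqref{FPzoom} with source truncated to $\{0\le f_n\le 1-\theta\}$. The Caccioppoli-type estimate from Section~\ref{sectionbdd} gives a uniform $L^2_{t,x}H^1_v$ bound on $g_n$ over $Q_{3/4}$, and since $b_n$ is uniformly bounded in $W^{1,\infty}$ with common modulus $o_1$ for $Db_n$, the hypoelliptic transfer Lemma~\ref{VAL} (with $\delta=1$, via Remark~\ref{rkVAL}) provides a uniform $H^\gamma$ bound on $g_n$ over $Q_{1/2}$ for some universal $\gamma>0$.

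Thus $(g_n)$ is precompact in $L^2(Q_{1/2})$; along subsequences, $g_n\to g_\infty$ strongly in $L^2$ and a.e., $r_n\to r_*\in[0,r_0]$, $v_{0,n}\to v_{0,*}$, $b_n\to b_*$ uniformly on $B_{1/2}$ (Arzelà-Ascoli via $o_1$), and $\tilde A_n,\tilde B_n,\tilde s_n$ converge weakly$*$ in $L^\infty$ (resp.\ weakly in $L^q$) to $\tilde A_*,\tilde B_*,\tilde s_*$. The limit drift $b_*$ equals a rescaling of $b_\infty$ if $r_*>0$ and $b_*(\tilde v)=Db_\infty(v_{0,*})\tilde v$ if $r_*=0$; in either case $b_*$ is nondegenerate in the sense of \eqref{N}, using Lemma~\ref{nondlemma} when $r_*=0$. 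The assumption $|\{0<f_n<1-\theta\}\cap Q_{1/2}|\to 0$ together with a.e.\ convergence forces $g_\infty\in\{0,1-\theta\}$ a.e. Simultaneously, $\nabla_v g_\infty\in L^2(Q_{1/2})$ by lower semicontinuity of the energy, and any $\{0,1-\theta\}$-valued $H^1_v$ function on the connected ball $B_{1/2}$ is constant on a.e.\ $(t,x)$-slice. Hence $g_\infty=g_\infty(t,x)$; after normalisation, $g_\infty=\mathbbm{1}_E$ for some measurable $E$ in the $(t,x)$-projection of $Q_{1/2}$.

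Passing to the weak limit in the subsolution inequality against test functions $\varphi_0(t,x)\psi(v)$ with $\psi\in C_c^\infty(B_{1/2})$ non-negative, the $v$-independence of $g_\infty$ eliminates the diffusion and the $v$-drift contributions (since $\nabla_v g_\infty=0$), yielding
\begin{equation*}
\partial_t g_\infty+V_\psi\cdot\nabla_x g_\infty\le S_\psi,\qquad
V_\psi:=\frac{\int b_*\psi}{\int\psi}\in\operatorname{co}\bigl(b_*(B_{1/2})\bigr),\ \ S_\psi\in L^q,
\end{equation*}
in the distributional sense on the $(t,x)$-projection of $Q_{1/2}$, for every admissible $\psi$. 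As $g_\infty=\mathbbm{1}_E$, the left-hand side is a signed measure concentrated on $\partial E$ with density $-(n_t+V_\psi\cdot n_x)$ against $\mathcal{H}^d|_{\partial E}$; its positive part is singular with respect to Lebesgue and so cannot be dominated by the absolutely continuous measure $S_\psi\,d\lambda$ unless it vanishes, which forces $n_t+V_\psi\cdot n_x\ge 0$ $\mathcal{H}^d$-a.e.\ on $\partial E$. Since the nondegeneracy of $b_*$ guarantees that $\{V_\psi\}$ accumulates an open ball of $\mathbb{R}^d$, one concludes $n_x\equiv 0$ and $n_t\ge 0$; thus $\partial E$ is horizontal in time with $E$ lying below, i.e.\ $g_\infty=\mathbbm{1}_{t\le t^*}$ for some $t^*\in\mathbb{R}$.

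Finally, the first measure hypothesis demands that $\{t\le t^*\}$ cover at least a $\delta_1$-fraction of the time interval $(-\omega^2,0]$ of $Q_\omega$, giving $t^*\ge -(1-\delta_1)\omega^2$; the second demands that $\{t>t^*\}$ cover at least a $\delta_2$-fraction of $(-\omega^2-s_0,-s_0]$, giving $t^*\le -s_0-\delta_2\omega^2$. Combined, $s_0\le(1-\delta_1-\delta_2)\omega^2<\omega^2$, contradicting $s_0>\omega^2$. The main obstacle lies in the rigidity step: justifying the weak limit in the subsolution inequality so that the terms $\tilde A_n\nabla_v g_n$ and $\tilde B_n\cdot\nabla_v g_n$ contribute nothing in the limit (leveraging $\nabla_v g_\infty=0$), and then extracting from the nondegeneracy of $b_*$ enough information to reduce the $\{0,1\}$-valued limit to a single backward-in-time step function.
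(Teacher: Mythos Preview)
Your overall strategy (compactness--contradiction, pass to a $v$-independent indicator limit, then a rigidity step) matches the paper's, but two steps contain genuine errors.

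\textbf{First gap: the double truncation is not a subsolution.} You invoke Lemma~\ref{subsol} for $g_n=\chi(f_n)$ with $\chi(s)=\min\{s^+,1-\theta\}$. That lemma requires $\chi$ convex with $\chi'\ge0$, but your $\chi$ is \emph{concave} (minimum of affine functions). In the Kato-type computation, the upper cap produces an extra term $+\delta_{1-\theta}(f_n)\,A_n\nabla_v f_n\cdot\nabla_v f_n\ge0$ on the wrong side, so $g_n$ need not be a subsolution. This also blocks your direct use of Lemma~\ref{VAL} for compactness: even for the legitimate subsolution $f_n^+$, the equation carries a defect \emph{measure} $-\mu_n$, which is not an $L^2$ source. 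The paper avoids both issues by working with $f_n^+$, letting $\theta_n\to0$ (so the limit is $\{0,1\}$-valued), and obtaining strong $L^2$ compactness by mollifying in $v$ and applying the averaging result of \cite{BGP} that tolerates measure right-hand sides, rather than Lemma~\ref{VAL}.

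\textbf{Second gap: the rigidity conclusion $g_\infty=\mathbbm{1}_{t\le t^*}$ is false.} From $n_t+V_\psi\cdot n_x\ge0$ for all $V_\psi$ in a \emph{bounded} ball $B_\sigma$ you only obtain the cone condition $n_t\ge\sigma|n_x|$, not $n_x\equiv0$. For instance, in one space dimension the set $E=\{t\le\sigma^{-1}|x|\}$ satisfies this condition but is not of the form $\{t\le t^*\}$; your final contradiction, which compares two values of $t^*$, then collapses. The paper does not try to classify $E$. Instead it uses Lemma~\ref{append1} to upgrade $(\partial_t+\bar v\cdot\nabla_x)\mathbbm{1}_P\le g\in L^2$ to $(\partial_t+\bar v\cdot\nabla_x)\mathbbm{1}_P\le0$ for each fixed $\bar v\in B_\sigma$, reads this as monotonicity of $\mathbbm{1}_P$ along every characteristic with speed $|\bar v|<\sigma$, and then chooses $\omega,s_0$ so that any point of $\widetilde Q_\omega$ can be reached from any point of $\widetilde Q_\omega^-$ along such a characteristic. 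This yields $\mathbbm{1}_P\equiv0$ on $\widetilde Q_\omega$ whenever $\widetilde Q_\omega^-\setminus P$ has positive measure, contradicting the first measure hypothesis directly.

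A smaller point: the diffusion and first-order terms are not ``eliminated'' in the limit by $\nabla_v g_\infty=0$, since $A_n\nabla_v g_n$ only converges weakly to some $G_1\in L^2$ (products of weak limits do not factor). What is true, and is what the paper uses, is that after integrating in $v$ these contributions are $L^2_{t,x}$ functions; the key rigidity input is then Lemma~\ref{append1}, not $BV$ structure of $E$.
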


\begin{proof}
We will select the constant $r_0>0$ in Step~3 and the constants $\omega,s_0\in(0,1)$ such that $Q_\omega\cup Q^-_\omega\subset Q_\frac{1}{2}$ in Step~4 below. 
Arguing by contradiction, we suppose that there exist two sequences of positive constants 
\begin{equation*}
\{\theta_n\}_{n\in\mathbb{N}^+}
{\rm\ \ with\ }\theta_n\rightarrow 0,\quad 
\{r_n\}_{n\in\mathbb{N}^+}
{\rm\ \ with\ }r_n\rightarrow r\in[0,r_0],
\end{equation*}
and a sequence of subsolutions $\{f_n\}_{n\in\mathbb{N}^+}$ verifying
\begin{equation}\label{lemmaassumption0}
(\partial_t+b_{r_n}\cdot\nabla_x)f_n\le {\rm div}_v\left(A_n\nabla_v f_n\right)+{B}_n\cdot\nabla_vf_n+s_n {\quad\rm in\ }Q_1,
\end{equation}
with $f_n\le 1$ in $Q_1$, $A_n,B_n$ satisfying \eqref{H}, $\| s_n\|_{L^q(Q_1)}\le\Lambda$, and in addition,
	\begin{equation}\label{lemmaassumption}
	\left\{ 
	\begin{aligned}
	\ &|\{f_n\ge1-\theta_n\}\cap Q_\omega|\ge\delta_1|Q_\omega|,\\
	\ &|\{f_n\le0\}\cap Q^-_\omega|\ge\delta_2|Q^-_\omega|,\\
	\ &|\{0<f_n<1-\theta_n\}\cap Q_\frac{1}{2}|\rightarrow0 {\quad\rm as\ } n\rightarrow \infty. 
	\end{aligned}
	\right. 
	\end{equation}  

\medskip\noindent{\textbf{Step 1.}} Passage to the limit. \\
By Lemma~\ref{subsol}, $f_n^+$ is also a subsolution to \eqref{lemmaassumption0} with the source term replaced by $|s_n|$ so that there exists some nonnegative measure $\mu_n$ such that 
	\begin{equation}\label{lemmaeq}
	(\partial_t+b_{r_n}\cdot\nabla_x)f_n^+= {\rm div}_v\left(A_n\nabla_v f_n^+\right)+{B}_n\cdot\nabla_vf_n^++|s_n|-\mu_n.
	\end{equation}

First, $b_{r_n}\rightarrow b_r$ in $L^\infty(B_1)$, where $b_r(v)=Db(v_0)v$ if $r=0$.  
Then, take a cut-off function $\phi$ such that $0\le\phi\le 1$, $\phi\equiv 1$ in $Q_\frac{3}{4}$ and ${\rm supp}\phi\subset Q_1$. Since $f_n^+\le1$, integrating \eqref{lemmaeq} against $\phi^2 f_n^+$ as before, we are able to get the energy estimate
	\begin{equation}\label{lemmaenergy}
	\int_{Q_1}\phi^2|\nabla_vf_n^+|^2
	\lesssim \int_{Q_1}\left( 1+|\nabla_v\phi|^2+|\partial_t\phi|+|\nabla_x\phi|\right),
	\end{equation} 	
	so that, after passing to a subsequence, we have 
	$$f_n^+\overset{\ast}{\rightharpoonup}F {\ \rm in\ } L^\infty(Q_\frac{3}{4}),\quad \nabla_vf_n^+\rightharpoonup\nabla_vF {\ \rm in\ } L^2(Q_\frac{3}{4}),$$
	and there exists $G_1,G_0\in L^2(Q_\frac{3}{4})$ such that
	$$A_n\nabla_vf_n^+\rightharpoonup G_1,\quad {B}_n\cdot\nabla_vf_n^++|s_n|\rightharpoonup G_0 \quad{\rm in}\ L^2(Q_\frac{3}{4}).$$
	Furthermore, integrating \eqref{lemmaeq} against $\phi^2$ yields
	\begin{equation*}
	\begin{split}
	\int_{Q_1}\phi^2\dif \mu_n
	\lesssim& \int_{Q_1} \left(1+|\nabla_v\phi|^2+|\partial_t\phi|+|\nabla_x\phi|+\phi^2|\nabla_vf_n^+|^2\right)\\
	\lesssim& \int_{Q_1} \left(1+|\nabla_v\phi|^2+|\partial_t\phi|+|\nabla_x\phi|\right),
	\end{split}
	\end{equation*} 	
	so that, up to a subsequence,
	$$\mu_n\rightharpoonup\mu_\infty\quad{\rm in}\ \mathcal{M}(Q_\frac{3}{4}).$$
	
	Therefore, sending $n\rightarrow\infty$ in \eqref{lemmaeq}, we have
	\begin{equation}\label{lemmaeqF}
	(\partial_t+b_r\cdot\nabla_x)F= {\rm div}_vG_1+G_0-\mu_\infty 
	{\quad\rm in\ } Q_\frac{3}{4}.
	\end{equation} 
We also have $L^2$-strong convergence of $\{f^+_n\}$. Indeed, by the energy estimate \eqref{lemmaenergy}, 
\begin{align}\label{compactes}
\|f_n^+-F\|_{L^2(Q_\frac{1}{2})}
\le& \|f_n^+-f_n^+*_v\rho_\epsilon\|_{L^2(Q_\frac{1}{2})}+\|f_n^+*_v\rho_\epsilon-F*_v\rho_\epsilon\|_{L^2(Q_\frac{1}{2})}
  +\|F*_v\rho_\epsilon-F\|_{L^2(Q_\frac{1}{2})}\nonumber\\
\le& C\epsilon+\|f_n^+*_v\rho_\epsilon-F*_v\rho_\epsilon\|_{L^2(Q_\frac{1}{2})}, 
\end{align}
where $\{\rho_\epsilon(v)\}_{\epsilon\in(0,1]}$ is a mollifier sequence. 
For each fixed $\epsilon$, together with the energy estimate, we can apply velocity averaging \cite[Theorem~1.8, Remark~1.8]{BGP} to \eqref{lemmaeq} in $Q_\frac{3}{4}$, with $b_r\in L^\infty$ such that the nondegeneracy holds, to derive the compactness of $\{f^+_n*_v\rho_\epsilon\}_n$ in $L^2(Q_\frac{1}{2})$. 
Sending $n\rightarrow\infty$ and then $\epsilon\rightarrow0$ in \eqref{compactes}, we obtain 
\begin{equation*}
f_n^+\rightarrow F \quad{\rm in}\ L^2(Q_\frac{1}{2}). 
\end{equation*}
Then, with $Q_\omega\cup Q^-_\omega\subset Q_\frac{1}{2}$, passing to the limit in our assumption \eqref{lemmaassumption} gives
\begin{equation}\label{lemmaF}
\left\{ 
\begin{aligned}
	\ &|\{F=1\}\cap Q_\omega|\ge\delta_1|Q_\omega|,\\
	\ &|\{F=0\}\cap Q^-_\omega|\ge\delta_2|Q^-_\omega|,\\
	\ &|\{0<F<1\}\cap Q_\frac{1}{2}|=0.\\
\end{aligned}
\right. 
\end{equation} 
	
\medskip\noindent{\textbf{Step 2.}} Identification of the limit function. \\
We point out that \eqref{lemmaenergy} also implies the boundedness of $\nabla_vF$ in $L_{loc}^2(Q_1)$. 
Referring to the isoperimetric lemma, Lemma~\ref{append1} (see the appendix), we know that a characteristic function cannot lie in $H^1$ unless it is constant. Thus, \eqref{lemmaF} says that either $F(t,x,v)=0$ for a.e. $v\in B_\frac{1}{2}$ or $F(t,x,v)=1$ for a.e. $v\in B_\frac{1}{2}$, 
	which means 
	$$F(t,x,v)=\mathbbm{1}_P(t,x) {\quad\rm in\ }Q_\frac{1}{2},$$
	for some measurable set $P$ in $\widetilde{Q}_\frac{1}{2}=(-\frac{1}{4},0)\times B_\frac{1}{8}$. 
Moreover, due to \eqref{lemmaF}, the set $P$ satisfies 
	\begin{equation}\label{lemmaP}
	\left\{ 
	\begin{aligned}
	\ &|\widetilde{Q}_\omega\cap P|>0,\\
	\ &|\widetilde{Q}^-_\omega\big\backslash P|>0,\\
	\end{aligned}
	\right. 
	\end{equation}
	where we denote $\widetilde{Q}_\omega:=(-\omega^2,0]\times B_{\omega^3}$ and $\widetilde{Q}^-_\omega:=\widetilde{Q}_\omega-(s_0,0)$.

\medskip\noindent{\textbf{Step 3.}} Linearization of the drift. \\
We are going to determine the zoom scale $r_0^{-1}$ to be sufficiently large so that $b_r$ with $r\in[0,r_0]$ behaves like a nondegenerate affine transformation. 

For any fixed $w\in B_\frac{1}{4}$, integrating \eqref{lemmaeqF} against $\rho(v-w)$ over $v\in B_1$, where the function $\rho\ge 0$ is radial, supported in $B_\frac{1}{4}$ and $\int_{B_1}\rho=1$, we deduce that, for any $(t,x,w)\in \widetilde{Q}_{\frac{1}{2}}\times B_\frac{1}{4}$, 
\begin{equation}\label{lemmaeqtx0}
(\partial_t+\mathscr{B}(w)\cdot\nabla_x)\mathbbm{1}_P(t,x)
\le  \|\rho\|_{W^{1,\infty}}\int_{B_1}\left(|G_1(t,x,v)|+|G_0(t,x,v)|\right)\dif v,
\end{equation} 
where the velocity vector field in the drift is defined by 
\begin{equation*}
\mathscr{B}(w):=\int_{B_1}b_r(v)\rho(v-w)\dif v.
\end{equation*}
On account of the definition of $b_r$ and our choice of the function $\rho$, 
\begin{equation}\label{bv}
\mathscr{B}(w)=\int_{B_1}\left(Db(v_0)v+o(r,v)\right)\rho(v-w)\dif v=Db(v_0)w+\int_{B_1}o(r,v)\rho(v-w)\dif v, 
\end{equation} 
where $o(r,v)$ is the remainder of the linearization of the velocity vector field $\mathscr{B}(w)$, 
\begin{equation}\label{bvo}
o(r,v):=b_r(v)-Db(v_0)v=\int_0^1 \left[Db(v_0+\tau rv)-Db(v_0)\right]v\dif\tau. 
\end{equation} 

By Lemma~\ref{nondlemma}, the set $\big\{Db(v_0)w:w\in B_\frac{1}{4}\big\}$ contains the ball $B_{2\sigma}$ for some universal  constant $\sigma>0$. 
The expression \eqref{bvo} gives $|o(r,v)|\le o_1(r)$ for any $v\in B_1$. 
Combining these two facts with \eqref{bv}, we conclude that there exists some universal positive constant $r_0$ (depending on $o_1$) such that $\mathscr{B}(w)$ takes all the values over the ball $B_\sigma$ 
whenever $r\in[0,r_0]$. 

 \begin{figure}
	\def\svgwidth{10cm}
	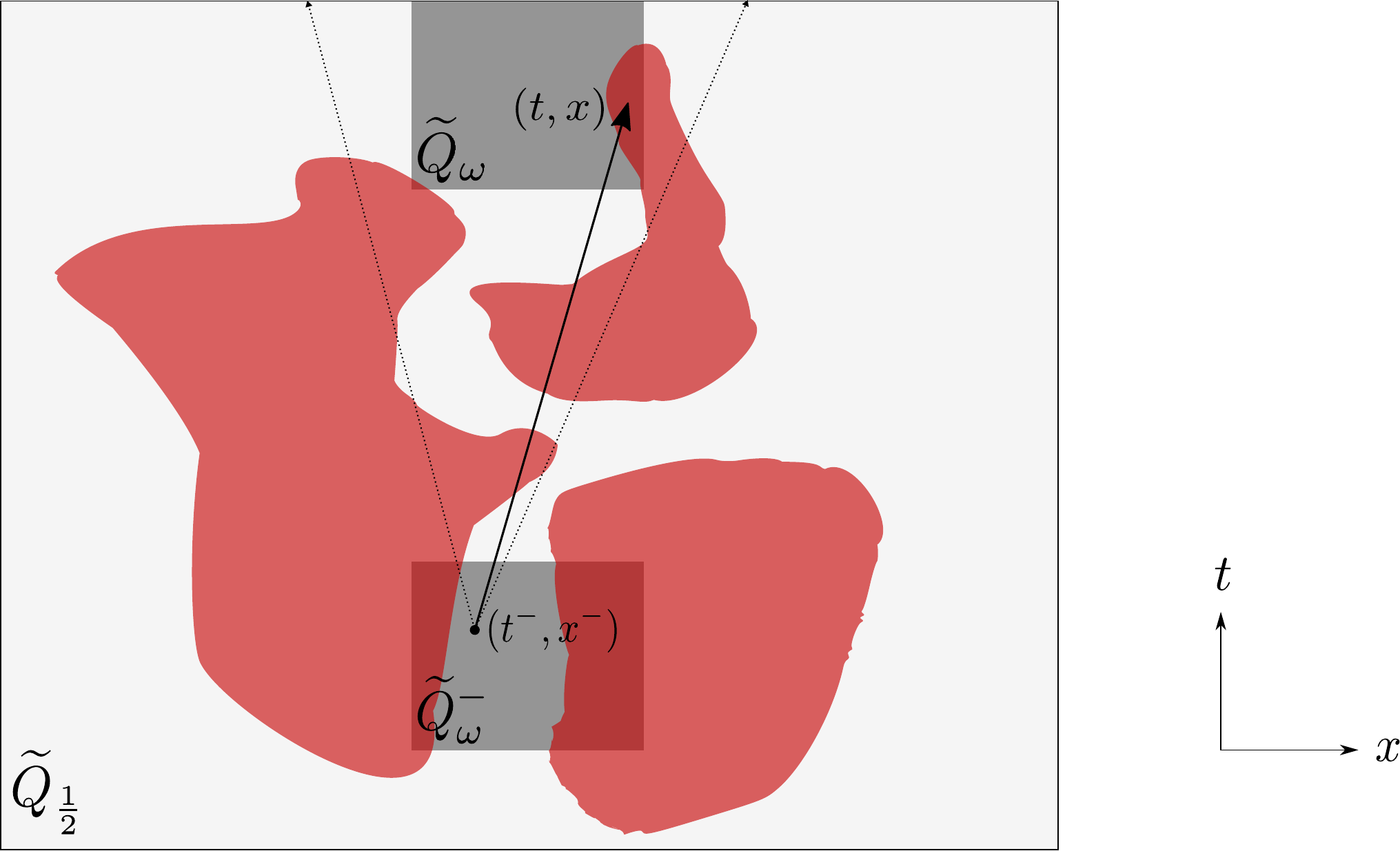\\\vspace{0.25cm}
	{\textbf{Figure.}} Any point in $\widetilde{Q}^-_\omega$ traveling with velocity valued in $B_\sigma$ can
	reach anywhere in $\widetilde{Q}_\omega$. The red region $P$ meets $\widetilde{Q}_\omega$ and does not cover $\widetilde{Q}^-_\omega$. 
\end{figure}

\medskip\noindent{\textbf{Step 4.}} Propagation of zeros. \\
Observe that the right-hand side of \eqref{lemmaeqtx0} is independent of $w$ and belongs to $L^2_{t,x}$. By applying Lemma~\ref{append1}, we can enhance the inequation \eqref{lemmaeqtx0} as follows: 
	\begin{equation}\label{lemmaeqtx}
	(\partial_t+\overline{v}\cdot\nabla_x)\mathbbm{1}_P(t^-+s,x^-+\overline{v}s)\le 0
	\end{equation} 
for any $(t^-,x^-,\overline{v})\in \widetilde{Q}^-_\omega\times B_\sigma$ and for any $s>0$ such that $(t^-+s,x^-+\overline{v}s)\in \widetilde{Q}_{\frac{1}{2}}$.
Then, for any $(t,x)\in\widetilde{Q}_\omega$, 
we pick $s$ and $\overline{v}$ verifying $(t,x)=(t^-+s,x^-+\overline{v}s)$. It follows that the time shift $s=t-t^-\in(s_0-\omega^2,s_0+\omega^2)$ and 
\begin{equation}\label{lemmaeqtx1}
|\overline{v}|=\frac{|x-x^-|}{s}<\frac{2\omega^2}{s_0-\omega^2} \le\sigma,
\end{equation} 
where we can see that $s_0>\omega^2$ and the last inequality in \eqref{lemmaeqtx1} both hold if we set 
$\omega:=\sqrt{\frac{\sigma s_0}{2+\sigma}}.$
Besides, by choosing $s_0:=\frac{1}{8}$, 
we have $s_0+\omega^2<\frac{1}{4}$ and $\omega^3<\frac{1}{8}$, which guarantees that 
$$Q_\omega\cup Q^-_\omega\subset Q_\frac{1}{2}.$$
In other words, we conclude that, for any $(t^-,x^-,t,x)\in \widetilde{Q}^-_\omega\times\widetilde{Q}_\omega$, there exists $\overline{v}\in B_\sigma$ such that $(t,x)=(t^-+s,x^-+\overline{v}s)$; see Figure. 
Consequently, from \eqref{lemmaeqtx}, we reach
\begin{equation*}
\mathbbm{1}_P(t,x)\le\mathbbm{1}_P(t^-,x^-) {\rm\quad for\ a.e.\ }(t^-,x^-,t,x)\in \widetilde{Q}^-_\omega\times\widetilde{Q}_\omega. 
\end{equation*}
Owing to the second inequality in \eqref{lemmaP}, we can choose $(t^-,x^-)\in \widetilde{Q}^-_\omega\backslash P$ so that 
\begin{equation*}
\mathbbm{1}_P(t,x)= 0 {\rm\quad for\ a.e.\ }(t,x)\in \widetilde{Q}_\omega,
\end{equation*}
which contradicts the first inequality in \eqref{lemmaP}. This finishes the proof. 
\end{proof}

\subsection{Density estimate and oscillation lemma}
\begin{lemma}[density estimate]\label{density}
Let $\delta_2>0$ be universal and $f$ be a subsolution to \eqref{FPzoom} with $\tilde{A},\tilde{B}$ satisfying \eqref{H} in $Q_1$. 
Assume $b\in C^1(B_1)$, with $o_1$ as the modulus of continuity of $Db$, is nondegenerate in $B_1$ in the sense of \eqref{N} and $\| Db\|_{L^\infty(B_1)}\le\Lambda$.
There exist some universal constants $\omega,s_0,r_0,\lambda_0>0$, and $\theta_0\in(0,1)$ 
such that for any $r\in(0,r_0]$, if $f\le1$ in $Q_1$ satisfies 
$$\left|\{f\le 0\}\cap Q^-_\omega\right|\ge \delta_2|Q^-_\omega| {\quad and\quad} \|\tilde{s}\|_{L^q(Q_1)}\le\lambda_0$$ 
for some $q>(1+2d)^2$, then
\begin{equation*}
	 f\le 1-\theta_0 {\quad in\ }Q_\frac{\omega}{2}. 
\end{equation*} 
\end{lemma}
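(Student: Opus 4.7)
My plan is to combine the intermediate value Lemma~\ref{lemmaDG} with a De~Giorgi-type iteration over dyadic levels approaching the ceiling $f=1$ from below, and then to close with the local boundedness Theorem~\ref{bdd}. The universal constants $\omega,s_0,r_0$ and $\theta\in(0,1)$ inherited from Lemma~\ref{lemmaDG} will be retained, while $\delta_1,\lambda_0,\theta_0$ will be pinned down during the argument.

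To set up the iteration, I introduce the dyadic levels $L_k:=1-2^{-k-1}$ and the rescaled truncations $g_k:=2^{k+1}(f-L_k)^+$ for $k\in\mathbb{N}$. Since $t\mapsto 2^{k+1}(t-L_k)^+$ is convex and nondecreasing, Lemma~\ref{subsol} ensures that each $g_k$ is a subsolution of an equation of the form \eqref{FPzoom} with source $2^{k+1}\tilde{s}\,\mathbbm{1}_{\{f\ge L_k\}}$, and the assumption $f\le 1$ gives $g_k\le 1$ throughout $Q_1$. Since $\{g_k\le 0\}\supset\{f\le 0\}$, the lower-level-set hypothesis transfers uniformly in $k$: $|\{g_k\le 0\}\cap Q_\omega^-|\ge\delta_2|Q_\omega^-|$. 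Provided $\lambda_0$ is chosen small enough that $2^{k+1}\lambda_0\le\Lambda$ for every relevant $k$, one can apply Lemma~\ref{lemmaDG} to each $g_k$ and obtain the dichotomy: either $|\{g_k\ge 1-\theta\}\cap Q_\omega|<\delta_1|Q_\omega|$, or $|\{0<g_k<1-\theta\}\cap Q_{1/2}|\ge\delta_0$. The second alternative places positive measure inside the shell $\{L_k<f<1-\theta 2^{-k-1}\}$, and a direct computation shows that each value $f\in[L_0,1)$ belongs to at most $\lceil\log_2(1/\theta)\rceil+1$ such shells. Consequently the second alternative can hold for at most $N:=C\log(1/\theta)\,|Q_{1/2}|/\delta_0$ indices, so the first alternative is forced at some universal $k_0\le N$.

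Translating back, this yields $|\{f\ge L^*\}\cap Q_\omega|<\delta_1|Q_\omega|$ with $L^*:=1-\theta 2^{-k_0-1}<1$ a universal constant. To close, I would apply Theorem~\ref{bdd} to the subsolution $\phi:=(f-L^*)^+$, which is bounded by $1-L^*$ and hence satisfies $\|\phi\|_{L^2(Q_\omega)}\le(1-L^*)\sqrt{\delta_1|Q_\omega|}$. Rescaling $Q_\omega$ to $Q_1$ in the manner of \S\ref{zoom} and invoking Theorem~\ref{bdd},
\[\sup_{Q_{\omega/2}}\phi\le C\bigl(\|\phi\|_{L^2(Q_\omega)}+\|\tilde{s}\|_{L^q(Q_\omega)}\bigr)\le C(1-L^*)\sqrt{\delta_1|Q_\omega|}+C\lambda_0.\]
Taking $\delta_1$ and $\lambda_0$ universally small pushes the right-hand side below $(1-L^*)/2$, so that $f\le 1-(1-L^*)/2=:1-\theta_0$ in $Q_{\omega/2}$, which is the desired conclusion.

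The main delicacy will be the coupling between universal constants. Lemma~\ref{lemmaDG} outputs $\delta_0$ as a function of $\delta_1,\delta_2$, which in turn determines $N$ and hence the terminal level $L^*$; shrinking $\delta_1$ in the closing step to absorb the $L^2$-term may drive $\delta_0$ downward and $L^*$ toward $1$, threatening the positivity of $\theta_0$. Resolving this will require either a quantitative bookkeeping of the dependence $\delta_0(\delta_1)$ produced by the compactness argument of Lemma~\ref{lemmaDG}, or replacing the closing application of Theorem~\ref{bdd} by a self-improving integrability iteration at fixed $\delta_1$ in the spirit of the De~Giorgi proof of Theorem~\ref{bdd}.
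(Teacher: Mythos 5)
Your argument follows the same template that the paper invokes (it cites \cite[Lemma~4.5]{GIMV} and omits the proof): iterate the intermediate value Lemma~\ref{lemmaDG} over a ladder of truncations approaching $1$, count shells to force the "small upper level set" alternative at some universal stage $k_0$, and close with the local boundedness Theorem~\ref{bdd}. Structurally this is correct. Two remarks on the details.

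First, a cosmetic one: the standard choice is to iterate $F_{k+1}:=\theta^{-1}(F_k-(1-\theta))$, which produces pairwise \emph{disjoint} shells $\{1-\theta^k<f<1-\theta^{k+1}\}$ and a clean pigeonhole bound $N\le |Q_{1/2}|/\delta_0$. Your dyadic ladder $g_k=2^{k+1}(f-L_k)^+$ gives shells with overlap multiplicity $O(\log(1/\theta))$; your counting still works, just with the extra logarithmic factor, so this is a harmless deviation.

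Second, and more importantly, the "main delicacy" you flag at the end is not in fact a circularity, and neither of your proposed fixes is needed. The closing inequality is
\[
\sup_{Q_{\omega/2}}\phi \le C(1-L^*)\sqrt{\delta_1|Q_\omega|}+C\lambda_0,
\]
and the requirement $C(1-L^*)\sqrt{\delta_1|Q_\omega|}\le (1-L^*)/4$ is \emph{independent of} $L^*$: it reads $\delta_1\le (16C^2|Q_\omega|)^{-1}$, where $C$ comes from Theorem~\ref{bdd} and $\omega$ is produced in Steps~3--4 of the proof of Lemma~\ref{lemmaDG} without any reference to $\delta_1,\delta_2$. So fix $\delta_1$ at this universal threshold \emph{first}; this then determines $\delta_0=\delta_0(\delta_1,\delta_2)$, hence $N$, hence $k_0$ and $L^*$, and only afterwards choose $\lambda_0\le\min\bigl(2^{-N-1}\Lambda,\ (1-L^*)/(4C)\bigr)$ and set $\theta_0:=(1-L^*)/2$. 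There is no loop: the only constant whose constraint involves $L^*$ is $\lambda_0$, which is chosen last. With this ordering your proof closes as written, without quantitative bookkeeping of $\delta_0(\delta_1)$ and without a separate measure-to-$L^\infty$ iteration.
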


The above density estimate shows that if a subsolution is far away from its upper bound in a subset of $Q^-_\omega$ with nontrivial measure, then it cannot get close to this bound in $Q_\frac{\omega}{2}$. 
Its proof is based on De Giorgi's intermediate value lemma, Lemma~\ref{lemmaDG}, and the local boundedness theorem, Theorem~\ref{bdd}, whose argument is almost the same as \cite[Lemma~4.5]{GIMV} and thus we omit it here.

\begin{lemma}[oscillation lemma]\label{osc}
Let $f$ be a weak solution to \eqref{FPzoom} with $\tilde{A},\tilde{B}$ satisfying \eqref{H} in $Q_1$. 
Assume $b\in C^1(B_1)$, with $o_1$ as the modulus of continuity of $Db$, is nondegenerate in $B_1$ in the sense of \eqref{N}, and in addition, 
$$\| Db\|_{L^\infty(B_1)}\le\Lambda {\quad and\quad} \tilde{s}\in L^q(Q_1)$$ 
for some $q>(1+2d)^2$.  
Then, there exist some universal constants $\omega,r_0,\theta_1\in(0,1)$ and $C>0$ such that for any $r\in(0,r_0]$, we have 
\begin{equation*}
{\rm osc}_{Q_\frac{\omega}{2}} f \le \left(1-\theta_1\right){\rm osc}_{Q_1} f+C\|\tilde{s}\|_{L^q(Q_1)}. 
\end{equation*} 
\end{lemma}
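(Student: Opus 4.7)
The plan is to deduce the oscillation reduction from the density estimate (Lemma~\ref{density}) by a standard dichotomy argument, much as in the classical De~Giorgi--Nash--Moser theory. Set $M:=\sup_{Q_1}f$ and $m:=\inf_{Q_1}f$, so that $\mathrm{osc}_{Q_1}f=M-m$. We may assume $M>m$, as otherwise the conclusion is trivial. The first step is to dispose of the case of a large source: if $\|\tilde s\|_{L^q(Q_1)}\ge\tfrac{\lambda_0}{2}(M-m)$, where $\lambda_0$ is the universal constant from Lemma~\ref{density}, then $\mathrm{osc}_{Q_{\omega/2}}f\le M-m\le\tfrac{2}{\lambda_0}\|\tilde s\|_{L^q(Q_1)}$, so the inequality holds for any $\theta_1\in(0,1)$ by choosing $C\ge 2/\lambda_0$. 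From now on I may assume $\|\tilde s\|_{L^q(Q_1)}<\tfrac{\lambda_0}{2}(M-m)$.

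Next I normalise. Set $g:=\tfrac{2}{M-m}\bigl(f-\tfrac{M+m}{2}\bigr)$, so that $-1\le g\le 1$ in $Q_1$. Since \eqref{FPzoom} is linear, both $g$ and $-g$ are weak solutions (hence subsolutions) to an equation of the same type, with unchanged diffusion $\tilde A$, with drift $\pm\tilde B$ still satisfying \eqref{H}, and with source $\pm\tfrac{2\tilde s}{M-m}$, whose $L^q$-norm is at most $\lambda_0$ by the previous reduction. All the structural assumptions of Lemma~\ref{density} are therefore uniformly preserved by this renormalisation.

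I now split into cases by the measure of the sublevel set. Either
\[
\bigl|\{g\le 0\}\cap Q^-_\omega\bigr|\ge\tfrac{1}{2}|Q^-_\omega|
\quad\text{or}\quad
\bigl|\{-g\le 0\}\cap Q^-_\omega\bigr|\ge\tfrac{1}{2}|Q^-_\omega|.
\]
In the first case, Lemma~\ref{density} applied to $g$ with $\delta_2=\tfrac{1}{2}$ gives $g\le 1-\theta_0$ in $Q_{\omega/2}$, i.e.\ $\sup_{Q_{\omega/2}}f\le M-\tfrac{\theta_0}{2}(M-m)$. In the second case, the same lemma applied to $-g$ gives $\inf_{Q_{\omega/2}}f\ge m+\tfrac{\theta_0}{2}(M-m)$. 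Either way, $\mathrm{osc}_{Q_{\omega/2}}f\le\bigl(1-\tfrac{\theta_0}{2}\bigr)(M-m)$, so the conclusion holds with $\theta_1:=\theta_0/2$ and $C$ chosen as in the first step.

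The argument is conceptually routine once Lemma~\ref{density} is in place; the only point that requires care is the interplay between the renormalisation of $f$ and the smallness hypothesis on $\tilde s$ in the density estimate, which is why the two‑case reduction on $\|\tilde s\|_{L^q(Q_1)}$ versus $M-m$ is needed. The genuine difficulty of the whole section is thus already concentrated in the density estimate and, through it, in the intermediate value Lemma~\ref{lemmaDG}; no new obstruction arises in passing from density to oscillation.
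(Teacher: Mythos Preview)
Your proof is correct and follows essentially the same route as the paper: normalise so the solution lies in $[-1,1]$, split according to which of $\{g\le0\}$ or $\{-g\le0\}$ occupies at least half of $Q^-_\omega$, and apply Lemma~\ref{density} with $\delta_2=\tfrac12$ to conclude with $\theta_1=\theta_0/2$. The only cosmetic difference is that the paper absorbs $\|\tilde s\|_{L^q}$ directly into the normalising factor (dividing by ${\rm osc}_{Q_1}f+2\lambda_0^{-1}\|\tilde s\|_{L^q}$) instead of your preliminary large-source/small-source case split; also note a harmless slip in your write-up: when passing from $g$ to $-g$ the lower-order drift $\tilde B$ does not change sign, only the source does.
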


\begin{proof}
We have shown the local boundedness of the weak solutions. Thus, for the sake of clarity, we assume $f$ is bounded from above and from below in $Q_1$ so that we can define 
\begin{equation*}\label{rescale}
\overline{f}(t,x,v):=\left({\rm osc}_{Q_1}f+2\lambda_0^{-1}\|\tilde{s}\|_{L^q(Q_1)}\right)^{-1} \left(\sup\nolimits_{Q_1}f+\inf\nolimits_{Q_1}f-2f(t,x,v)\right), 
\end{equation*}
where the constant $\lambda_0>0$ is provided by Lemma~\ref{density}. 

Observing that $-1\le f\le1$ and $\pm\overline{f}$ are weak solutions to the equation
\begin{equation*}
(\partial_{t}+b_r\cdot\nabla_{x})(\pm\overline{f})={\rm div}_{v}\big(\tilde{A}\nabla_{v}(\pm\overline{f})\big)+\tilde{B}\cdot\nabla_{v}(\pm\overline{f})\mp\overline{s} {\quad\rm in\ }Q_1, 
\end{equation*} 
where $\overline{s}:=\left(\frac{1}{2}{\rm osc}_{Q_1}f+\lambda_0^{-1}\|\tilde{s}\|_{L^q(Q_1)}\right)^{-1}\tilde{s}$,  verifies the condition $\|\overline{s}\|_{L^q(Q_1)}\le\lambda_0$. 
Then, we are allowed to apply Lemma~\ref{density} to $\pm \overline{f}$ in $Q_1$ with $\delta_2=\frac{1}{2}$  
and this determines the universal constants $\omega$ and $r_0$. 

If $\overline{f}$ is at least half of the space less than $0$ in $Q^-_\omega$, that is, $\big|\{\overline{f}\le 0\}\cap Q^-_\omega\big|\ge\frac{1}{2}\big|Q^-_\omega\big|$, then we have $\sup_{Q_\frac{\omega}{2}}\overline{f}\le 1-\theta_0$ so that
\begin{equation*}
{\rm osc}_{Q_\frac{\omega}{2}}f
=\frac{1}{2}\left({\rm osc}_{Q_1}f+2\lambda_0^{-1}\|\tilde{s}\|_{L^q(Q_1)}\right){\rm osc}_{Q_\frac{\omega}{2}}\overline{f}
\le\left(1-\frac{\theta_0}{2}\right)\left({\rm osc}_{Q_1}f+2\lambda_0^{-1}\|\tilde{s}\|_{L^q(Q_1)}\right). 
\end{equation*}
If $\overline{f}$ is at least half of the space greater than $0$ in $Q^-_\omega$, we can get the same result by dealing with $-\overline{f}$. This completes the proof with $\theta_1:=\frac{\theta_0}{2}$. 
\end{proof}

The H\"older estimate of the weak solution to \eqref{FP} is obtained by translation, scaling and applying Lemma~\ref{osc} iteratively. 
For convenience, we introduce a technical lemma \cite[Lemma~8.23]{GT} to take the place of the iterative procedure. 
\begin{lemma}\label{technicallemma2}
	Let $r_1>0$ and $\psi_1$, $\psi_2$ be nondecreasing functions on $[0,r_1]$ satisfying 
	\begin{equation*}
	\psi_1(\tau r)\le \delta \psi_1(r)+\psi_2(r)
	\end{equation*}
	for some $\tau,\delta\in (0,1)$. Then, for any $\epsilon\in(0,1)$ and for $r\in(0,r_1]$, there exist some positive constants $C=C(\delta)$ and $\beta_1:=\frac{(1-\epsilon)\log \delta}{\log \tau}$ such that 
	\begin{equation*}
	\psi_1(r)\le C\left(\left(\frac{r}{r_1}\right)^{\beta_1}\psi_1(r_1)+\psi_2\left(r^\epsilon r_1^{1-\epsilon}\right) \right).
	\end{equation*}
\end{lemma}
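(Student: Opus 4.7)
The plan is to exploit a standard iteration of the hypothesis, but with the twist that one should iterate not from $r_1$ straight down to $r$, but rather from a carefully chosen intermediate scale. The natural candidate is the geometric interpolant $\rho := r^\epsilon r_1^{1-\epsilon}$, which satisfies $r\le\rho\le r_1$ and whose ratio to $r$ is $r/\rho=(r/r_1)^{1-\epsilon}$. This choice is exactly what makes both the source term and the Hölder exponent come out clean.

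More concretely, I would pick the unique nonnegative integer $N$ with
\[
\tau^{N+1}\rho < r \le \tau^N\rho,
\]
i.e.\ $N=\lfloor\kappa\rfloor$ with $\kappa=(1-\epsilon)\log(r/r_1)/\log\tau\ge 0$. Iterating the assumption $\psi_1(\tau s)\le\delta\psi_1(s)+\psi_2(s)$ a total of $N$ times, starting at $s=\rho$, yields
\[
\psi_1(\tau^N\rho)\le \delta^N\psi_1(\rho)+\sum_{j=0}^{N-1}\delta^j\psi_2(\tau^{N-1-j}\rho).
\]

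The crucial observation is that every argument $\tau^{N-1-j}\rho$ in the sum lies in $(0,\rho]$, so the monotonicity of $\psi_2$ allows us to replace each $\psi_2(\tau^{N-1-j}\rho)$ by $\psi_2(\rho)=\psi_2(r^\epsilon r_1^{1-\epsilon})$; summing the geometric series in $\delta$ gives the clean bound $\frac{1}{1-\delta}\psi_2(r^\epsilon r_1^{1-\epsilon})$. Similarly, $\psi_1(\rho)\le\psi_1(r_1)$ and $\psi_1(r)\le\psi_1(\tau^N\rho)$ by monotonicity of $\psi_1$.

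It remains to convert $\delta^N$ into the claimed power $(r/r_1)^{\beta_1}$. From $N>\kappa-1$ and $\log\delta<0$, I get $\delta^N<\delta^{\kappa-1}=\delta^{-1}\delta^\kappa$, and the identity
\[
\delta^\kappa=\exp\bigl(\tfrac{(1-\epsilon)\log(r/r_1)\log\delta}{\log\tau}\bigr)=(r/r_1)^{\beta_1}
\]
gives $\delta^N\le \delta^{-1}(r/r_1)^{\beta_1}$. Combining the pieces yields
\[
\psi_1(r)\le \delta^{-1}(r/r_1)^{\beta_1}\psi_1(r_1)+\tfrac{1}{1-\delta}\psi_2(r^\epsilon r_1^{1-\epsilon}),
\]
with constant $C=C(\delta)$ as stated. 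The main conceptual obstacle—once the algebra is laid out—is really just recognizing that iterating from $\rho$ instead of from $r_1$ is what keeps the $\psi_2$-argument exactly at $r^\epsilon r_1^{1-\epsilon}$ and makes the exponent $\beta_1$ pop out of the definition; after that, everything is bookkeeping via monotonicity and the geometric series.
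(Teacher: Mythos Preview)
Your argument is correct and is precisely the standard iteration behind \cite[Lemma~8.23]{GT}, which the paper invokes without reproducing: start from the intermediate scale $\rho=r^\epsilon r_1^{1-\epsilon}$, iterate the hypothesis $N=\lfloor(1-\epsilon)\log(r/r_1)/\log\tau\rfloor$ times, and use monotonicity of $\psi_1,\psi_2$ together with the geometric series. Your explicit constant $C=\max\{\delta^{-1},(1-\delta)^{-1}\}$ indeed depends only on $\delta$, matching the statement.
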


We are now ready to establish the H\"older regularity of solutions. 
\begin{proof}[Proof of Theorem~\ref{regularity}]
Let us first write down the rescaled equation to apply Lemma~\ref{osc}. 
Let $z_0:=(t_0,x_0,v_0)\in Q_1$ and $r_1:=\min\big\{r_0,\sup\{r>0:Q^b_{r}(z_0)\subset Q_1\}\big\}$ with $r_0$ given in Lemma~\ref{osc}.  
With $r\in(0,r_1]$, as in subsection~\ref{zoom}, $\tilde{f}:=f\circ\mathcal{T}_{z_0,r}$ satisfies \eqref{FPzoom} with the new coefficients defined by \eqref{coefficient} which satisfy the conditions \eqref{H} and \eqref{N} in $Q_1$.

By Lemma~\ref{osc} applied to $\tilde{f}$, we obtain
\begin{equation*}
{\rm osc}_{Q_\frac{\omega}{2}}\tilde{f} \le \left(1-\theta_1\right){\rm osc}_{Q_1}\tilde{f}+C\| \tilde{s}\|_{L^q(Q_1)}. 
\end{equation*} 
Rescaling back, we deduce that, for any $r\in[0,r_1]$, 
\begin{equation*}
{\rm osc}_{Q^b_\frac{\omega r}{2}(z_0)}f \le \left(1-\theta_1\right){\rm osc}_{Q^b_r(z_0)}f+Cr^{2-\frac{1+2d}{q}}\| s\|_{L^q(Q^b_r(z_0))}. 
\end{equation*} 

Then, applying Lemma~\ref{technicallemma2} with $\tau:=\frac{\omega}{2}$, $\delta:=1-\theta_1$,
$$\psi_1(r):={\rm osc}_{Q^b_r(z_0)}f,\quad \psi_2(r):=Cr^{2-\frac{1+2d}{q}}\| s\|_{L^q(Q^b_r(z_0))}, $$
and choosing 
$\epsilon\in(0,1)$ such that $$\beta_1=\frac{(1-\epsilon)\log\delta}{\log\tau}\le\left(2-\frac{1+2d}{q}\right)\epsilon$$
yields that, for any $r\in[0,r_1]$, 
\begin{align}\label{osceq}
{\rm osc}_{Q^b_r(z_0)}f
\lesssim& \left(\frac{r}{r_1}\right)^{\beta_1}{\rm osc}_{Q^b_{r_1}(z_0)}f+\left(r^\epsilon r_1^{1-\epsilon}\right)^{2-\frac{1+2d}{q}}\| s\|_{L^q(Q^b_{r_1}(z_0))}\nonumber\\
\lesssim& \left(\frac{r}{r_1}\right)^{\beta_1}\left({\rm osc}_{Q^b_{r_1}(z_0)}f+r_1^{2-\frac{1+2d}{q}}\| s\|_{L^q(Q^b_{r_1}(z_0))}\right). 
\end{align}
Recalling the definition \eqref{domain} for $Q^b_r(z_0)$ with $r\le1$, we point out that $(t_0-r^3,t_0]\times B_{cr^3}(x_0)\times B_{r^3}(v_0)\subset Q^b_r(z_0)$ for some (small) positive constant $c$ only depending on $\Lambda$. 
Combining this with \eqref{osceq} and Theorem~\ref{bdd}, as well as using a standard covering argument, we achieve 
\begin{equation*}
|f(z)-f(z')|
\lesssim |z-z'|^\frac{\beta_1}{3}\left(\| f\|_{L^2(Q_1)}+\| s\|_{L^q(Q_1)}\right)
\end{equation*}
for any $z,z'\in Q_{\frac{1}{2}}$. This finishes the proof with $\beta:=\frac{\beta_1}{3}$. 
\end{proof}

\subsection{Boundary estimate}
When dealing with nonlinear Cauchy problems, one will always come across the evolution characterization of solutions from some given initial data. In other words, apart from interior estimates, a priori estimates around the initial time have to be derived. Based on our previous work, we state such kind of result precisely as follows. 

\begin{corollary}
Let $f$ be a weak solution to \eqref{FP} in $[0,1]\times B_1\times B_1$. 
In addition to the assumptions in Theorem~\ref{regularity} (replace $Q_1$ by $[0,1]\times B_1\times B_1$), we suppose that the initial data $f(0,x,v)\in C^{\alpha_0}(B_1\times B_1)$. 
Then, there exist some constants $\beta_0$, $C>0$ depending only on $\lambda,\Lambda,d,K,q,o_1,$ and $\alpha_0$
such that 
	\begin{equation*}
	\| f \|_{C^{\beta_0}\big([0,1]\times B_\frac{1}{2}\times B_\frac{1}{2}\big)}
	\le C\left(\| f\|_{L^2([0,1]\times B_1\times B_1)}+\| s\|_{L^q([0,1]\times B_1\times B_1)}
	+\| f(0,\cdot,\cdot)\|_{C^{\alpha_0}(B_1\times B_1)} \right).  
	\end{equation*}
\end{corollary}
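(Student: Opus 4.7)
The plan is to reduce the corollary to the interior estimate of Theorem~\ref{regularity} by extending the solution across the initial hyperplane $\{t=0\}$, in the spirit of the elliptic argument of \cite[Theorem~8.27]{GT}. First I extend the initial datum $f_0:=f(0,\cdot,\cdot)\in C^{\alpha_0}(B_1\times B_1)$ to a compactly supported $\bar g\in C^{\alpha_0}(\mathbb{R}^{2d})$ with $\|\bar g\|_{C^{\alpha_0}(\mathbb{R}^{2d})}\lesssim \|f_0\|_{C^{\alpha_0}(B_1\times B_1)}$, for instance via a standard Whitney extension followed by a cut-off.

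Next I introduce the auxiliary function
\[
\tilde f(t,x,v):=
\begin{cases}
f(t,x,v), & t\in[0,1],\\
\bar g\bigl(x-t b(v),v\bigr), & t\in[-1,0),
\end{cases}
\]
on $[-1,1]\times B_1\times B_1$. It is continuous across $\{t=0\}$ since both pieces equal $f_0(x,v)$ there, and the lower piece is transported along the characteristics of $\partial_t+b(v)\cdot\nabla_x$, so that $(\partial_t+b(v)\cdot\nabla_x)\tilde f=0$ on $\{t<0\}$ in the distributional sense. Extending the diffusion coefficients by $\tilde A:=\lambda I$ and $\tilde B:=0$ on $\{t<0\}$ preserves the conditions \eqref{H}, and a source $\tilde s$ is then forced by the requirement that $\tilde f$ satisfy an equation of the form \eqref{FP} on the whole extended cylinder; this produces $\tilde f$ as a (distributional) solution there.

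Once $\tilde f$ is identified as a weak solution of a kinetic Fokker--Planck equation fitting the framework of Theorem~\ref{regularity}, applying that theorem on rescaled kinetic cylinders $Q_r^b(z_0)$ with $z_0\in\{0\}\times B_{1/2}\times B_{1/2}$ (via $\mathcal{T}_{z_0,r}$) yields oscillation decay of $\tilde f$, and hence of $f$, uniformly up to $\{t=0\}$. Combined with the interior estimate on cylinders that do not meet $\{t=0\}$ and a standard covering argument, this delivers the H\"older estimate stated in the corollary.

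The principal obstacle is the rigorous verification of the weak formulation across $\{t=0\}$: because $\bar g$ is merely H\"older continuous, $\nabla_v^2\tilde f$ on $\{t<0\}$ is only a distribution, so the naive source $\tilde s=-{\rm div}_v(\tilde A\nabla_v\tilde f)$ is not an $L^q$ function. I would resolve this by mollifying $\bar g$ at a scale $\epsilon$ proportional to the working scale $r$ of each rescaled cylinder, using the bounds $\|D^2\bar g_\epsilon\|_{L^\infty}\lesssim\epsilon^{\alpha_0-2}$ and $\|\bar g-\bar g_\epsilon\|_{L^\infty}\lesssim\epsilon^{\alpha_0}$. A direct volume computation (using $|Q_r^b(z_0)|\sim r^{2+4d}$ and the rescaling rule for the source in \eqref{coefficient}) shows that, with $\epsilon\sim r$, the rescaled source $\tilde s':=r^2\tilde s\circ\mathcal{T}_{z_0,r}$ satisfies $\|\tilde s'\|_{L^q(Q_1)}\lesssim r^{\alpha_0}$. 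The remaining small jump of $\tilde f$ at $\{t=0\}$, of size $O(r^{\alpha_0})$, is then absorbed as a controlled perturbation, and iterating the oscillation decay via Lemma~\ref{technicallemma2} as in the proof of Theorem~\ref{regularity} produces the desired H\"older estimate up to the initial time, with exponent $\beta_0$ depending on the interior exponent $\beta$ and on $\alpha_0$.
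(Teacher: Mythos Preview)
Your extension-across-$\{t=0\}$ strategy is the right idea, and the paper's own proof also proceeds by extending to $t<0$. However, the specific extension you propose runs into a difficulty that you have not actually resolved. After mollifying $\bar g$ at scale $\epsilon\sim r$, the function $\tilde f_\epsilon$ matches $f$ at $t=0$ only up to the error $f_0-\bar g_\epsilon$, of size $O(\epsilon^{\alpha_0})$; in the weak formulation this jump becomes a surface measure $(f_0-\bar g_\epsilon)\,\delta_{t=0}$ on the right-hand side, which lies in no $L^q$ space with $q>1$. Consequently $\tilde f_\epsilon$ is \emph{not} a weak solution on any cylinder straddling $\{t=0\}$, and neither Theorem~\ref{regularity} nor Lemma~\ref{osc} applies to it as stated. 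Saying the jump is ``absorbed as a controlled perturbation'' hides precisely this point; I do not see how to remove the delta contribution while keeping the source in $L^q$ and the extended function in $L^2_{t,x}H^1_v$. (Without mollification the obstacle is even more basic: $\nabla_v\tilde f\notin L^2$ on $\{t<0\}$ since $\bar g$ is only $C^{\alpha_0}$, so $\tilde f$ is not in the weak-solution class at all.)

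The paper bypasses this entirely by extending not $f$ but the one-sided truncations $\underline f:=(\tilde f-M)^+$ and $(m-\tilde f)^+$, where $M$ and $m$ are the supremum and infimum of the rescaled initial trace on $B_1\times B_1$. By Lemma~\ref{subsol} these are nonnegative subsolutions of \eqref{FPzoom}, and since $\tilde f(0,\cdot,\cdot)$ lies between $m$ and $M$ their traces at $t=0$ vanish identically. Hence their extension by zero to $t<0$ is continuous and remains a subsolution on the symmetric cylinder $\mathcal C_1=(-1,1)\times B_1\times B_1$, with no extra source and no regularity required of $f_0$. Because the extension vanishes on $\{t\le 0\}$, the density hypothesis of Lemma~\ref{density} with $\delta_2=\tfrac12$ is automatic; applying that lemma to each truncation and adding gives
\[
{\rm osc}_{\mathcal C_\omega}\tilde f \le (1-\theta_0)\,{\rm osc}_{\mathcal C_1}\tilde f + (M-m) + C\|\tilde s\|_{L^q(\mathcal C_1)},
\]
and after undoing the rescaling the term $M-m$ is controlled by $r^{\alpha_0}[f_0]_{C^{\alpha_0}}$. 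Lemma~\ref{technicallemma2} then closes the iteration exactly as in the proof of Theorem~\ref{regularity}. The key point is that working with subsolutions and the density estimate, rather than solutions and the full oscillation lemma, allows the trivial zero extension and sidesteps any $v$-regularity requirement on the extended data.
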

\begin{proof}
It suffices to show the estimate around the initial time. For $z_0:=(t_0,x_0,v_0)$ with $t_0=0$,  $(x_0,v_0)\in B_1\times B_1$, we set the domains $\mathcal{C}_R:=(-R^2,R^2)\times B_{R^3}\times B_R$ for $R>0$ and
$$\mathcal{C}^b_r(z_0):=\left\{(t,x,v):t\in(t_0-r^2,t_0+r^2),\ |x-x_0-(t-t_0)b(v_0)|<r^3,\ |v-v_0|<r\right\},$$
where $r\in(0,r_1]$ and $r_1:=\min\big\{r_0,\sup\{r>0:\mathcal{C}^b_{r}(z_0)\subset \mathcal{C}_1\}\big\}$ with $r_0$ given by Lemma~\ref{density} with $\delta_2=\frac{1}{2}$. 

Let $\tilde{f}:=f\circ\mathcal{T}_{z_0,r}$ and $M:=\sup_{B_1\times B_1}\tilde{f}(0,\cdot,\cdot)$. Referring to Remark~\ref{subsol}, it is not hard to see that $\underline{f}:=(\tilde{f}-M)^+$ is a subsolution to \eqref{FPzoom} in $\mathcal{C}_1$ with the source term replaced by $|\tilde{s}|$, as it has zero extension for $t<0$. 

It follows that the local boundedness (Theorem~\ref{bdd}) around the initial time holds so that 
\begin{align}\label{bdbd}
\sup\nolimits_{\mathcal{C}_\frac{1}{2}}\tilde{f}
&\lesssim \|\underline{f}\|_{L^2({\mathcal{C}_1})} +\|\tilde{s}\|_{L^q(\mathcal{C}_1)}+M\nonumber\\
&\lesssim \| f\|_{L^2({[0,1]\times B_1\times B_1})}
	+\| s\|_{L^q([0,1]\times B_1\times B_1)}+\| f(0,\cdot,\cdot)\|_{L^\infty({B_1\times B_1})}.
\end{align}

Since $\underline{f}=0$ in $\mathcal{C}_1\cap\{t\le0\}$, 
applying Lemma~\ref{density} to
$\left(\sup\nolimits_{\mathcal{C}_1}\underline{f} +\lambda_0^{-1}\|\tilde{s}\|_{L^q(\mathcal{C}_1)}\right)^{-1} \underline{f}(t,x,v)$ with $\delta_2=\frac{1}{2}$ yields
\begin{align*}
\tilde{f}-M\le \underline{f}
\le (1-\theta_0)\sup\nolimits_{\mathcal{C}_1}\underline{f} +C\|\tilde{s}\|_{L^q(\mathcal{C}_1)}
= (1-\theta_0)\left(\sup\nolimits_{\mathcal{C}_1}\tilde{f}-M\right) +C\|\tilde{s}\|_{L^q(\mathcal{C}_1)} {\quad \rm in\ } \mathcal{C}_\omega
\end{align*}
for some constants $\omega,\theta_0\in(0,1)$. 
Similarly, $(m-\tilde{f})^+$ with $m:=\inf_{B_1\times B_1}\tilde{f}(0,\cdot,\cdot)$ is also a subsolution in $\mathcal{C}_1$ and such that $(m-\tilde{f})^+= 0$ in $\mathcal{C}_1\cap\{t\le0\}$, thus we deduce that
\begin{align*}
m-\tilde{f}\le -(1-\theta_0)\left(\inf\nolimits_{\mathcal{C}_1}\tilde{f}-m\right)+C\|\tilde{s}\|_{L^q(\mathcal{C}_1)}
{\quad\rm in\ } \mathcal{C}_\omega. 
\end{align*}
Adding them together, we have
\begin{align*}
	{\rm osc}_{\mathcal{C}_\omega}\tilde{f}
&\le (1-\theta_0){\rm osc}_{\mathcal{C}_1}\tilde{f}+\theta_0(M-m)+C\|\tilde{s}\|_{L^q(\mathcal{C}_1)}\\
&\le (1-\theta_0){\rm osc}_{\mathcal{C}_1}\tilde{f}+{\rm osc}_{B_1\times B_1}\tilde{f}(0,\cdot,\cdot)
  +Cr^{2-\frac{1+2d}{q}}\| s\|_{L^q([0,1]\times B_1\times B_1)}.
\end{align*}

Notice that $B_{cr^3}(z_0)\subset\mathcal{C}^b_r(z_0)$ for some universal constant $c>0$. Thanks to Lemma~\ref{technicallemma2}, there exist some universal constants $C$ and ${\beta_1}>0$ such that for any $r\in(0,r_1]$, 
	\begin{equation*}
	\begin{split}
{\rm osc}_{B_{cr^3}(z_0)}f &\le	{\rm osc}_{\mathcal{C}^b_r(z_0)}f
	\lesssim r^{\beta_1}\|\tilde{f}\|_{L^\infty(\mathcal{C}_\frac{1}{2})}
	  +r^{\beta_1}\| s\|_{L^q([0,1]\times B_1\times B_1)}
	  +{\rm osc}_{\mathcal{C}^b_{\sqrt{r}}(z_0)}f\\
	&\lesssim r^{\beta_1}\|\tilde{f}\|_{L^\infty(\mathcal{C}_\frac{1}{2})}
	  +r^{\beta_1}\| s\|_{L^q([0,1]\times B_1\times B_1)}
	  +r^\frac{\alpha_0}{2}\left[f(0,\cdot,\cdot)\right]_{C^{\alpha_0}(B_1\times B_1)}\\
	&\lesssim r^{3\beta_0}\left(\| f\|_{L^2({[0,1]\times B_1\times B_1})}
	  +\| s\|_{L^q([0,1]\times B_1\times B_1)}
	  +\| f(0,\cdot,\cdot)\|_{C^{\alpha_0}(B_1\times B_1)}\right), 
	\end{split}
	\end{equation*}
where we used \eqref{bdbd} and $\beta_0:=\frac{1}{3}\min(\frac{\alpha_0}{2},{\beta_1})$ in the last inequality. 
The proof is complete. 
\end{proof}

\appendix
\section{A qualitative De Giorgi's isoperimetric lemma}
The following result is intuitive. The derivative of characteristic functions behaves like delta functions and thus its boundedness from above by $L^p$-functions ($p>1$) implies its negative. 
This can be seemed as a qualitative version of De Giorgi's isoperimetric inequality (see, for instance, \cite[Lemma~II]{DG} and \cite[Lemma~10]{Va}). 
We give a proof for the sake of completeness. 

\begin{lemma}\label{append1}
Let $D\in\mathbb{N}^+$, $h\in\mathbb{S}^{D-1}$, $\Omega$ be an open set in $\mathbb{R}^{D}$ and $P$ be a measurable set in $\Omega$. 
If $h\cdot\nabla\mathbbm{1}_P\le g$ in $\mathcal{D}'(\Omega)$
for some $g\in L^p(\Omega)$ with $p>1$, then we actually have $h\cdot\nabla\mathbbm{1}_P\le 0$  in $\mathcal{D}'(\Omega)$.  
\end{lemma}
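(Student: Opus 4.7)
The plan is first to observe that the hypothesis upgrades $h\cdot\nabla\mathbbm{1}_P$ to a locally finite signed Radon measure, and then to reduce by Fubini in the direction $h$ to a one-dimensional statement for $\{0,1\}$-valued BV functions. Concretely, the distribution $\mu:=g-h\cdot\nabla\mathbbm{1}_P$ is non-negative on $\Omega$, so by the Schwartz structure theorem $\mu$ is a locally finite non-negative Radon measure; combined with $g\in L^p_{\mathrm{loc}}\subset L^1_{\mathrm{loc}}$, the identity $h\cdot\nabla\mathbbm{1}_P=g-\mu$ exhibits the left-hand side as a locally finite signed Radon measure, so $\mathbbm{1}_P$ has locally bounded variation in the direction $h$.

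After a rotation I would assume $h=e_1$ and write $x=(x_1,x')\in\mathbb{R}\times\mathbb{R}^{D-1}$. Testing the hypothesis against tensor products $\phi_1(x_1)\phi'(x')$ with $\phi_1,\phi'\ge 0$ and applying Fubini gives, for each fixed $\phi_1$ and a.e.~$x'$,
$$-\int \mathbbm{1}_{P_{x'}}(x_1)\,\phi_1'(x_1)\,dx_1\;\le\;\int g(x_1,x')\,\phi_1(x_1)\,dx_1,$$
where $P_{x'}:=\{x_1:(x_1,x')\in P\}$ and $\Omega_{x'}$ is the corresponding $x_1$-slice. Choosing $\phi_1$ in a countable dense subset of non-negative $C_c^\infty(\mathbb{R})$ makes the exceptional null set uniform in $\phi_1$, so for a.e.~$x'$,
$$\partial_{x_1}\mathbbm{1}_{P_{x'}}\;\le\; g(\cdot,x')\quad\text{in }\mathcal{D}'(\Omega_{x'}),$$
with $g(\cdot,x')\in L^p(\Omega_{x'})$ by Fubini.

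It thus suffices to prove the one-dimensional claim: if $Q\subset I\subset\mathbb{R}$ is measurable on an open interval $I$ and $\mathbbm{1}_Q'\le g_0\in L^p(I)$ in $\mathcal{D}'(I)$, then $\mathbbm{1}_Q'\le 0$. The first paragraph, applied in one dimension, gives $\mathbbm{1}_Q\in BV_{\mathrm{loc}}(I)$; since $\mathbbm{1}_Q$ is $\{0,1\}$-valued, the classical structure of one-dimensional BV functions shows that $Q\cap I$ agrees up to a Lebesgue null set with a locally finite disjoint union of open intervals $(a_j,b_j)$, hence
$$\mathbbm{1}_Q'=\sum_j\bigl(\delta_{a_j}-\delta_{b_j}\bigr)$$
as Radon measures on $I$ (with atoms falling outside $I$ dropped). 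A positive atom $+\delta_{a_j}$ would contradict the hypothesis: testing against a non-negative bump $\phi$ with $\phi(a_j)=1$ whose support shrinks to $\{a_j\}$ forces $1\le\int g_0\phi\to 0$, since $g_0\,dx$ has no atoms. Only negative atoms survive, so $\mathbbm{1}_Q'\le 0$.

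Finally, re-integrating in $x'$ against a non-negative $\phi'\in C_c^\infty(\mathbb{R}^{D-1})$ and using Fubini yield $\langle h\cdot\nabla\mathbbm{1}_P,\phi_1\otimes\phi'\rangle\le 0$ on all non-negative tensor test functions, and density extends this to all non-negative $\phi\in C_c^\infty(\Omega)$. The key obstacle is the one-dimensional structural fact that any $\{0,1\}$-valued locally BV function is, up to null set, a locally finite disjoint union of open intervals, so that its distributional derivative is purely atomic; once this is in hand, everything else reduces to Fubini slicing and the elementary testing argument against shrinking bumps.
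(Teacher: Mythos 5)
Your argument is correct but follows a genuinely different path from the paper's. The paper proves the lemma by a direct difference-quotient computation: it tests $\mathbbm{1}_P$ against $\tau^{-1}\left(\varphi(\cdot-\tau h)-\varphi\right)$, uses the elementary inequality $a\le (a_+)^p$ for $a\in\{-1,0,1\}$ (the only place where the $\{0,1\}$-valuedness of $\mathbbm{1}_P$ is used) to self-improve the difference quotient to a $p$-th power, bounds the resulting expression by $\tau^{p-1}\|g\|_{L^p}^p\|\varphi\|_{L^\infty}$ via the hypothesis, Jensen and translation invariance, and concludes by letting $\tau\to 0^+$. You instead upgrade $h\cdot\nabla\mathbbm{1}_P$ to a locally finite signed measure via Schwartz's theorem, reduce to dimension one by Fubini slicing (with the countable-dense-test-function device to make the exceptional null set independent of $\phi_1$), invoke the structure of $\{0,1\}$-valued one-dimensional BV functions to write the slice derivative as a locally finite atomic measure $\sum_j(\delta_{a_j}-\delta_{b_j})$, and rule out positive atoms by testing against shrinking bumps because $g_0\,dx$ has no atoms. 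Both are sound; the paper's is shorter and avoids any explicit BV structure theory, while yours makes the geometric content explicit (a positive jump of an indicator along $h$ cannot be dominated by an absolutely continuous measure), at the cost of more machinery. One minor streamlining: your closing density argument is unnecessary. Once you know $\partial_{x_1}\mathbbm{1}_{P_{x'}}\le 0$ in $\mathcal{D}'(\Omega_{x'})$ for a.e.\ $x'$, you can test directly against any non-negative $\phi\in C_c^\infty(\Omega)$, since
\begin{equation*}
-\int_\Omega \mathbbm{1}_P\, h\cdot\nabla\phi \,\dif z
= \int_{\mathbb{R}^{D-1}}\Bigl(-\int_{\Omega_{x'}}\mathbbm{1}_{P_{x'}}(x_1)\,\partial_{x_1}\phi(x_1,x')\,\dif x_1\Bigr)\dif x' \le 0
\end{equation*}
by Fubini, the slice $\phi(\cdot,x')$ being a non-negative test function on $\Omega_{x'}$ for every $x'$; the null set is already fixed and does not depend on $\phi$.
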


\begin{proof}
For any $\varphi\in\mathcal{D}(\Omega)$ with $\varphi\ge0$ and for any $\tau\in\left(0,{\rm dist}({\rm supp}\varphi,\partial\Omega)\right)$, 
\begin{equation*}
\begin{split}
\int_\Omega \frac{1}{\tau}\left(\varphi(z-\tau h)-\varphi(z)\right)\mathbbm{1}_P(z)\dif z
&= \int_\Omega \frac{1}{\tau}\left(\mathbbm{1}_P(z+\tau h)-\mathbbm{1}_P(z)\right)\varphi(z)\dif z\\
&\le \int_\Omega \frac{1}{\tau}\left(\mathbbm{1}_P(z+\tau h)-\mathbbm{1}_P(z)\right)_+^p\varphi(z)\dif z\\
&\le \int_\Omega \frac{1}{\tau}\left(\int_0^1 \tau h\cdot\nabla\mathbbm{1}_P(z+s\tau h)\dif s\right)_+^p\varphi(z)\dif z.
\end{split}
\end{equation*}
By our assumption, the last term above is bounded by $\tau^{p-1} \|g\|_{L^p(\Omega)}^{p}\|\varphi\|_{L^\infty(\Omega)}$. 
Then, applying the dominated convergence theorem with $\tau\rightarrow0^+$, we obtain
\begin{equation*}
-\int_\Omega \left(h\cdot\nabla\varphi(z)\right)\mathbbm{1}_P(z)\dif z\le 0. 
\end{equation*}
This concludes the proof. 
\end{proof}

\bibliographystyle{plain}
\bibliography{ZHU_FP}

\end{document}